\providecommand{\U}[1]{\protect\rule{.1in}{.1in}}
\newtheorem{theorem}{Theorem}[section]
\newtheorem{corollary}[theorem]{Corollary}
\newtheorem{definition}[theorem]{Definition}
\newtheorem{example}[theorem]{Example}
\newtheorem{lemma}[theorem]{Lemma}
\newtheorem{proposition}[theorem]{Proposition}
\newtheorem{remark}[theorem]{Remark}
\newenvironment{proof}[1][Proof]{\noindent \emph{#1.} }{\hfill \ \rule{0.5em}{0.5em}}
\newcommand{\BIGOP}[1]{\mathop{\mathchoice{\raise-0.22em\hbox{\huge $#1$}} {\raise-0.05em\hbox{\Large $#1$}}{\hbox{\large $#1$}}{#1}}}
\makeatletter\@addtoreset{equation}{section}\makeatother
\makeatletter\@addtoreset{figure}{section}\makeatother
\makeatletter\@addtoreset{table}{section}\makeatother
\begin{document}

\title{On the Dirac-Frenkel Variational Principle on Tensor Banach Spaces}
\author{Antonio Falc\'{o}\footnote{Corresponding author}$^{\, \, \,,1,3},$ Wolfgang Hackbusch$^{2}$ and Anthony Nouy$%
^{3}$ \\
$^{1}$ ESI International Chair@CEU-UCH, \\
Departamento de Matem\'aticas, F\'{\i}sica y Ciencias
Tecnol\'ogicas,\\
Universidad Cardenal Herrera-CEU, CEU Universities \\
San Bartolom\'e 55,
46115 Alfara del Patriarca (Valencia), Spain\\
e-mail: \texttt{afalco@uchceu.es}\\
$^{2}$ Max-Planck-Institut \emph{Mathematik in den Naturwissenschaften}\\
Inselstr. 22, D-04103 Leipzig, Germany \\
e-mail: \texttt{{wh@mis.mpg.de} }\\
$^{3}$ Centrale Nantes, \\
LMJL UMR CNRS 6629\\
1 rue de la No\"e,
44321 Nantes Cedex 3, France.\\
e-mail: \texttt{anthony.nouy@ec-nantes.fr}}
\date{}
\maketitle

\begin{abstract}
The main goal of this paper is to extend the so-called Dirac-Frenkel 
Variational Principle in the framework of tensor Banach spaces.
To this end we observe that a tensor product of
normed spaces can be described as a union of disjoint 
connected components. Then we show that 
each of these connected components, composed by tensors in Tucker format
with a fixed rank, is a Banach manifold modelled in a particular Banach space, 
for which we provide local charts.
The description of the local charts of these manifolds 
is crucial for an algorithmic
treatment of high-dimensional partial differential equations and minimization problems.
In order to describe the relationship between these manifolds and the
natural ambient space we prove under natural conditions that each
connected component can be immersed in a particular ambient Banach space. 
This fact allows us to finally extend the Dirac-Frenkel variational
principle in the framework of topological tensor spaces.
\end{abstract}

\noindent\emph{2010 AMS Subject Classifications:} 15A69, 46B28, 46A32.

\noindent \emph{Key words:} Tensor spaces, Banach manifolds, Tensor formats, Tensor rank.


\section{Introduction}

Tensor approximation methods play a central role in the numerical solution 
of high-dimensional problems arising in a wide range of applications. 
Low-rank tensor formats based on subspaces are widely used for complexity reduction 
in the representation of high-order tensors.
The construction of these formats are usually based on a hierarchy of tensor product
subspaces spanned by orthonormal bases, because in most cases a hierarchical
representation fits with the structure of the mathematical model and facilitates
its computational implementation.
Two of the most popular formats are the
Tucker format and the Hierarchical Tucker format \cite{HaKuehn2009} (HT for
short). It is possible to show that the Tensor Train format \cite{Osedelets1}
(TT for short), introduced originally by Vidal \cite{Vidal}, is a particular
case of the HT format (see e.g. Chapter 12 in \cite{Hackbusch}). An
important feature of these formats, in the framework of topological tensor
spaces, is the existence of a best approximation in each fixed set of
tensors with bounded rank \cite{FALHACK}. In particular, it allows us to
construct, on a theoretical level, iterative minimisation methods for
nonlinear convex problems over reflexive tensor Banach spaces \cite%
{FalcoNouy}.

\bigskip

This paper is devoted to the use of the geometric structure of
 the Tucker format to construct reduced order
models of ordinary differential equations defined over tensor Banach spaces.
The Dirac-Frenkel variational principle is a well known tool in the 
numerical treatment of equations of quantum dynamics. 
It was originally proposed by Dirac and Frenkel in 1930 
to approximately solve the time-dependent Schr\"odinger equation.
It assumes the existence of a vector field (ordinary differential equation) over a 
configuration space represented by
a Hilbert space. This configuration space contains an immersed 
submanifold and the reduced order model is then obtained by 
projecting the vector field at each point of the submanifold 
onto its tangent space. Tucker tensors of fixed rank are used in the above framework
for the discretisation of differential 
equations arising in
quantum chemical problems or in the multireference Hartree and Hartree-Fock
methods (MR-HF) in quantum dynamics \cite{Lubish}. In particular, for finite-dimensional ambient tensor spaces, 
it can be shown that the set of Tucker tensors of fixed rank forms an immersed
finite-dimensional quotient manifold \cite{KoLu1}. A similar approach in a
complex Hilbert space setting for Tucker tensors of fixed rank is given in 
\cite{BCMT}. Then the numerical treatment of this class of problems follows
the general concepts of differential equations on manifolds \cite{HLW}.
Recently, similar results have been obtained for the TT format \cite{HRS}
and the HT format \cite{Uschmajew2012} (see also \cite{AJ}). The term
"matrix-product state" (MPS) was introduced in quantum physics (see, e.g., 
\cite{VC}). The related tensor representation can be found already in \cite%
{Vidal} without a special naming of the representation. The method has been
reinvented by Oseledets and Tyrtyshnikov (see \cite{Osedelets}, \cite%
{Osedelets1}, and \cite{OT}) and called "TT decomposition". For matrix
product states (MPS), the differential geometry in a finite-dimensional
complex Hilbert space setting is covered in \cite{HMOV}. 
Two commonly accepted facts are the following.
\begin{enumerate}
\item[(a)] Even if it can be shown in finite dimension that the set of Tucker
tensors with bounded rank is closed, the existence of a manifold structure for this set is
an open question. 
Thus the existence of minimisers over this
set can be shown, however, no first order optimality conditions are
available from a geometric point of view. 
\item[(b)] Even if either in finite dimension or in a Hilbert space
setting it can be shown that the set of Tucker (respectively, in finite
dimensions HT) tensors with fixed rank is a quotient manifold, 
an explicit parametrisation in order to provide a
manifold structure is not known.
\end{enumerate}

In our opinion, these two facts are due to the lack of a common mathematical
framework for developing a mathematical analysis of these
abstract objects. The main goal of this paper is to provide this common framework by
means of some of the tools developed in \cite{FALHACK} by some of the authors 
of this article in order to extend the Dirac-Frenkel variational method
to the framework of tensor Banach spaces.

Our starting point are the following natural questions that arise in 
the mathematical theory of tensor spaces. The first question is: It is 
possible to construct a parametrisation for the set of
tensors of fixed rank in order to show that it is a
true manifold even in the infinite-dimensional case? In a second step, if the answer
is positive, we would like to ask: Is the set of tensors
of fixed rank an immersed submanifold of the
topological tensor space, as ambient manifold, under consideration?
Finally, if the above two questions have positive answers, we would like to  extend
the Dirac-Frenkel variational principle on tensor Banach spaces.
\\

The paper is organised as follows. 

\begin{itemize}
\item In Sect. \ref{preliminary}, we introduce some important 
definitions and results that we will use widely along this paper.
In particular we introduce Banach manifolds not modelled in a particular
Banach space and we give as example the Grassmann manifold of a Banach space introduced
by A. Douady \cite{Douady66} in 1966. Our main contribution is
to give a Banach manifold structure to the set of subspaces of a normed space 
with a fixed finite dimension.

\item In Sect. \ref{sec:banach_manifold_tucker_fixed_rank}, we introduce the set
of tensors in Tucker format with fixed rank over a tensor product space 
of normed spaces. We prove that if the tensor product space has a norm such that 
the tensor product is continuous, with respect to that norm, then the
set of tensors in Tucker format with fixed rank is a $\mathcal{C}^{\infty}$-Banach 
manifold modelled on a particular Banach space. We point out that the regularity of
the manifold depends on the regularity of the tensor product considered as
a multilinear map between normed spaces. Even if a continuous multilinear map between
complex Banach spaces is always analytic, under the authors' knowledge, for a continuous
multilinear map between normed spaces we can only obtain a 
$\mathcal{C}^{\infty}$--differentiability. 
An interesting remark is that the geometric structure
is independent of the choice of the norm on the tensor product space. We illustrate
this fact by means of an example using Sobolev spaces. Finally, we 
show under the above conditions that a tensor space of normed spaces 
is a $\mathcal{C}^{\infty}$-Banach manifold not modelled on a particular Banach space.

\item In Sect. \ref{embedded_manifold}, we discuss the choice of a norm in
the ambient tensor Banach space to prove that the set of tensors with fixed Tucker rank is 
an immersed submanifold of that space (considered as Banach manifold).
To this end we assume the existence of a norm over the tensor space not
weaker than the injective norm . The same assumption is  used
in \cite{FALHACK} to prove the existence of a best approximation in the
Tucker case. Then we  show that the set of tensors in Tucker format with fixed rank
is an immersed submanifold of the ambient tensor Banach space. This fact
is far from trivial. The main difficulty is to prove that the tangent space
is a closed and complemented subspace of the ambient tensor Banach space 
under consideration. In a Hilbert space, 
every closed subspace is complemented, but  this fact is not true in a Banach 
(non Hilbert) space.

\item In Sect. \ref{Dirac_Frenkel}, we give a formalisation in this
framework of the multi--configuration time--dependent Hartree (MCTDH) method
(see \cite{Lubish}) in tensor Banach spaces.
\end{itemize}

\section{The Grassmann-Banach manifold and its relatives}
\label{preliminary}

In this section we introduce some important definitions and results that
we will use (and elsewhere): throughout this paper.

\bigskip

In the following, $X$ is either a normed space or a Banach space with norm $\left\Vert
\cdot\right\Vert .$ We denote by $X^\ast$ the topological dual of $X$. 
The dual norm $\left\Vert \cdot\right\Vert _{X^{\ast}}$
on $X^{\ast}$ is%
\begin{equation}
\left\Vert \varphi\right\Vert _{X^{\ast}}=\sup\left\{ \left\vert
\varphi(x)\right\vert :x\in X\text{ with }\left\Vert x\right\Vert
_{X}\leq1\right\} =\sup\left\{ \left\vert \varphi(x)\right\vert /\left\Vert
x\right\Vert _{X}:0\neq x\in X\right\} .  \label{(Dualnorm}
\end{equation}
We recall that if $X$ is a normed space, then $X^*$ is always a Banach
space.

\bigskip

By $\mathcal{L}(X,Y)$ we denote the space of continuous linear mappings from 
$X$ into $Y.$ The corresponding operator norm is written as $\left\Vert
\cdot\right\Vert _{Y\leftarrow X}.$ If $X$ and $Y$ are normed spaces then 
$(\mathcal{L}(X,Y),\|\cdot\|_{Y\leftarrow X})$ is a normed space. It is well known
that if $Y$ is a Banach space then $(\mathcal{L}(X,Y),\|\cdot\|_{Y\leftarrow X})$
is also a Banach space. 

\bigskip

Let $X_1,\ldots,X_d$ and $Y$ be normed spaces and 
$M:\mathop{\mathchoice{\raise-0.22em\hbox{\huge $\times$}} {\raise-0.05em\hbox{\Large $\times$}}{\hbox{\large
$\times$}}{\times}}_{\alpha=1}^d X_\alpha \rightarrow Y.$ We will say that $M$ is a
multilinear map if for each fixed $\alpha \in \{1,2,\ldots,d\}$, 
$$
x_\alpha \mapsto M(x_1,\ldots,x_{\alpha-1},x_\alpha,x_{\alpha+1},\ldots,x_d)
$$
is a linear map from $X_{\alpha}$ to $Y$ for all $(x_1,\ldots,x_{\alpha-1},x_{\alpha+1},\ldots,x_d) \in 
\mathop{\mathchoice{\raise-0.22em\hbox{\huge $\times$}} {\raise-0.05em\hbox{\Large $\times$}}{\hbox{\large
$\times$}}{\times}}_{k \in \{1,2,\ldots,d\}\setminus \{\alpha\}} X_k.$
Recall that a multilinear map $M$ from 
$\mathop{\mathchoice{\raise-0.22em\hbox{\huge $\times$}} {\raise-0.05em\hbox{\Large $\times$}}{\hbox{\large
$\times$}}{\times}}_{\alpha=1}^d (X_\alpha,\|\cdot\|_\alpha)$
equipped with the product topology $\|\cdot\|$ to a normed space $(Y,\Vert\cdot\Vert_Y)$ 
is continuous if and only
if $\Vert M\Vert <\infty$, with 
\begin{equation*}
\Vert M\Vert :=\sup_{\substack{ (x_{1},\hdots,x_{d}) \\ \Vert x_{1}\Vert
_{1}\leq 1,\hdots,\Vert x_{d}\Vert _{d}\leq 1}}\Vert M(x_{1},\ldots
,x_{d})\Vert _{Y}=\sup_{{(x_{1},\hdots,x_{d})}}\frac{\Vert M(x_{1},\ldots
,x_{d})\Vert _{Y}}{\Vert x_{1}\Vert _{1}\hdots\Vert x_{d}\Vert _{d}}.
\end{equation*}%
A useful result is the following (see Proposition 79 in \cite{HJ}).

\begin{proposition}\label{multilinear}
Let $X_1,\ldots,X_d$ and $Y$ be normed spaces and 
$M:\mathop{\mathchoice{\raise-0.22em\hbox{\huge $\times$}} {\raise-0.05em\hbox{\Large $\times$}}{\hbox{\large
$\times$}}{\times}}_{\alpha=1}^d X_\alpha \rightarrow Y$ be a continuous multilinear map. Then
$M$ is $\mathcal{C}^{\infty}$-Fr\'echet differentiable and $D^{k}M(x_1,\ldots,x_d)=0$ for
all $(x_1,\ldots,x_d) \in \mathop{\mathchoice{\raise-0.22em\hbox{\huge $\times$}} {\raise-0.05em\hbox{\Large $\times$}}{\hbox{\large
$\times$}}{\times}}_{\alpha=1}^d X_\alpha$ and $k > d.$
\end{proposition}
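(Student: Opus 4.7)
The plan is to proceed by induction on the arity $d$, using the fact that the multilinear expansion of $M(x+h)$ has only finitely many terms which split naturally into the value, the derivative, and a remainder that is quadratic or higher in $\|h\|$.

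For the base case $d=1$, a continuous linear map $M$ satisfies $M(x+h)=M(x)+M(h)$, so $M$ is its own Fréchet derivative at every point; hence $DM$ is constant, $D^2M\equiv 0$, and the statement holds for all $k\geq 2$. For the inductive step, I expand $M(x_1+h_1,\ldots,x_d+h_d)$ by multilinearity as
\begin{equation*}
M(x_1+h_1,\ldots,x_d+h_d)=\sum_{S\subseteq\{1,\ldots,d\}} M_S(x,h),
\end{equation*}
where $M_S$ places $h_\alpha$ in slot $\alpha$ when $\alpha\in S$ and $x_\alpha$ otherwise. The term $S=\emptyset$ is $M(x)$; the terms $|S|=1$ give the candidate derivative
\begin{equation*}
L(x)(h):=\sum_{\alpha=1}^d M(x_1,\ldots,x_{\alpha-1},h_\alpha,x_{\alpha+1},\ldots,x_d);
\end{equation*}
the remaining terms satisfy, by continuity of $M$,
\begin{equation*}
\Bigl\|\sum_{|S|\geq 2} M_S(x,h)\Bigr\|_Y\leq \|M\|\sum_{|S|\geq 2}\prod_{\alpha\in S}\|h_\alpha\|_\alpha\prod_{\beta\notin S}\|x_\beta\|_\beta,
\end{equation*}
which is $o(\|h\|)$ as $h\to 0$ in the product topology. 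This shows $DM(x)=L(x)$, with $L(x)$ continuous linear of norm at most $d\,\|M\|\max_\alpha\prod_{\beta\neq\alpha}\|x_\beta\|_\beta$.

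To obtain higher regularity, I observe that each summand of $L$ is, as a function of $x$, a continuous multilinear map of arity $d-1$ (it is constant in $x_\alpha$) with values in the Banach space $\mathcal{L}(\times_{\alpha=1}^d X_\alpha,Y)$. By the induction hypothesis applied to each of these $d$ summands, $DM$ is $\mathcal{C}^\infty$ with $D^{k}(DM)\equiv 0$ for every $k>d-1$; equivalently $D^{k+1}M\equiv 0$ for $k\geq d$, i.e.\ $D^k M\equiv 0$ for $k>d$, and $M$ itself is $\mathcal{C}^\infty$.

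The only delicate point is verifying the $o(\|h\|)$ bound for the remainder in the natural product norm on $\times_\alpha X_\alpha$, which comes down to noting that each $M_S$ with $|S|\geq 2$ contains at least two factors of $\|h_\alpha\|_\alpha$ and is therefore controlled by $\|M\|\cdot \|h\|^2\cdot P(\|x\|)$ for a polynomial $P$; this routine estimate is the main (and essentially only) obstacle, and it is immediate from the characterisation of continuity of $M$ recalled just before the proposition.
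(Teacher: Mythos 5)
Your proof is correct. Note that the paper does not prove this proposition at all --- it simply cites Proposition 79 of the reference by H\'ajek and Johanis --- so there is no in-paper argument to compare against; your induction on the arity $d$, with the binomial-type expansion of $M(x+h)$ over subsets $S\subseteq\{1,\ldots,d\}$ and the observation that each first-order partial is a continuous $(d-1)$-linear map into $\mathcal{L}(\times_{\alpha}X_\alpha,Y)$, is exactly the standard self-contained argument and all the estimates check out (the remainder terms with $|S|\ge 2$ are $O(\|h\|_\times^2)$ in the maximum norm, and the inductive step correctly yields $D^k(DM)\equiv 0$ for $k>d-1$, hence $D^kM\equiv 0$ for $k>d$). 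The only point worth making explicit is that each summand of $DM$ is the composition of a continuous $(d-1)$-linear map with the continuous linear projection $\times_{\beta=1}^d X_\beta\to\times_{\beta\ne\alpha}X_\beta$, so the induction hypothesis transfers through the chain rule; you gesture at this by saying the summand ``is constant in $x_\alpha$,'' which is fine.
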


Assume that $X$ and $Y$ are Banach spaces and let $U\subset X$ be an open connected set. Then 
a map $f:U \subset X \rightarrow X$ is an analytic map in $U$ if and only if for
 $x \in U$ and $\varphi \in Y^*$, there exists a neighbourhood of $0,$ namely $V(0)\subset \mathbb{K},$ 
where $\mathbb{K}$ is either $\mathbb{R}$ or $\mathbb{C},$ such that the map 
$$
V(0) \subset \mathbb{K} \rightarrow \mathbb{K}, \quad t \mapsto \varphi(f(x+th))
$$
is analytic. An immediate consequence of this definition is the fact that for $|t|$
sufficiently small and $x \in U,$
$$
\varphi(f(x+th)) = \sum_{n=0}^{\infty}a_n(x,h)\frac{t^n}{n!},
$$
where
$$
a_n(x,h) = \left.\frac{d^n}{dt^n}\varphi(f(x+th))\right|_{t=0}.
$$

The following result characterises an analytic function defined over complex Banach spaces (see Theorem 160 in \cite{
HJ}).

\begin{proposition}\label{analytic}
Let $X,Y$ be complex Banach spaces, $U \subset 􏰎X$ open, and $f:U\subset X \rightarrow Y.$ Then
$f$ is analytic if and only if $f$ is $\mathcal{C}^{1}$-Fr\'echet differentiable.
\end{proposition}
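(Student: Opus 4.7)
The plan is to handle the two implications separately, with the easy direction coming from the chain rule and the hard direction essentially being the Graves--Taylor--Hille--Zorn theorem relating weak/Gâteaux analyticity to Fréchet differentiability on complex Banach spaces.

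For the forward implication ($f$ analytic $\Rightarrow$ $f$ is $\mathcal{C}^1$-Fréchet), I would argue as follows. Given $x_0\in U$, choose $r>0$ with $\overline{B}(x_0,2r)\subset U$. First I would establish local boundedness of $f$ on a smaller ball: for every $\varphi\in Y^\ast$ and every fixed $h$ with $\|h\|\le 1$, the scalar function $t\mapsto \varphi(f(x_0+th))$ is holomorphic on a neighbourhood of $\overline{B}(0,r)\subset\mathbb{C}$, hence continuous and bounded on that compact set. By a Baire category argument on the sets $E_n=\{h\in \overline{B}(0,1): \sup_{|t|\le r}|\varphi(f(x_0+th))|\le n\}$, combined with the uniform boundedness principle (applied in $Y^{\ast\ast}$ via the values of $f$), one concludes that $f$ is norm-bounded on some ball $B(x_0,\rho)$. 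Once $f$ is locally bounded, for each $h$ with $\|h\|$ small I would apply Cauchy's integral formula on the unit disk in $\mathbb{C}$ to the $Y$-valued map $t\mapsto f(x_0+th)$ (which is holomorphic in the weak sense and locally bounded, hence strongly holomorphic by Dunford's theorem), obtaining
\begin{equation*}
f(x_0+h)=\sum_{n=0}^\infty \frac{1}{2\pi i}\oint_{|t|=\sigma} \frac{f(x_0+th)}{t^{n+1}}\,dt,
\end{equation*}
with Cauchy estimates $\|P_n(h)\|_Y\le M/\sigma^n$ where $P_n(h)$ is the $n$-th coefficient. Polarising, $P_n$ extends to a continuous $n$-homogeneous polynomial, and termwise differentiation gives a norm-convergent power series for $Df(x_0+\cdot)$; hence $f$ is in fact $\mathcal{C}^\infty$-Fréchet differentiable, and in particular $\mathcal{C}^1$.

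For the reverse implication ($f$ $\mathcal{C}^1$-Fréchet $\Rightarrow$ $f$ analytic), the argument is elementary. Fix $x\in U$, $h\in X$, and $\varphi\in Y^\ast$, and pick $V(0)\subset\mathbb{C}$ small enough that $x+th\in U$ for all $t\in V(0)$. The chain rule applied to the composition $t\mapsto x+th\mapsto f(x+th)\mapsto \varphi(f(x+th))$ yields that $g(t):=\varphi(f(x+th))$ is $\mathcal{C}^1$ on $V(0)$ with $g'(t)=\varphi(Df(x+th)h)$. Because $X,Y$ are \emph{complex} Banach spaces, $Df(x+th)\in\mathcal{L}(X,Y)$ is $\mathbb{C}$-linear, so $g'$ depends $\mathbb{C}$-linearly on the perturbation in $t$; that is, $g$ satisfies the Cauchy--Riemann equation at every point of $V(0)$. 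Hence $g$ is holomorphic on $V(0)$ and therefore analytic, which is exactly the definition of analyticity given in the excerpt.

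The main obstacle is clearly the implication analytic $\Rightarrow$ $\mathcal{C}^1$-Fréchet. The subtlety is that the definition in the paper only requires \emph{scalar} analyticity after pairing with an arbitrary $\varphi\in Y^\ast$ and restricting to complex lines, so one must upgrade this very weak ``Gâteaux holomorphy in the weak-$\ast$ sense'' to joint Fréchet differentiability in norm. The crucial non-trivial ingredient is the local boundedness step, which uses the Baire category theorem together with the uniform boundedness principle; without this passage through locally bounded $Y$-valued holomorphy, the Cauchy estimates that give norm convergence of the Taylor series would be unavailable. Everything else — polarisation of coefficients, continuity of $D^kf$ in operator norm — is routine once locally uniform power series convergence has been established.
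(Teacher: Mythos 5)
The paper does not actually prove this proposition: it is quoted from H\'ajek--Johanis \cite{HJ} (Theorem 160), so there is no internal argument to compare yours against. Your reverse implication (Cauchy--Riemann via the $\mathbb{C}$-linearity of $Df$) is correct, and the second half of your forward implication (Dunford's theorem, the Cauchy integral formula with Cauchy estimates, polarisation, termwise differentiation) is the standard route \emph{once local boundedness is in hand}.

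The gap is precisely the local boundedness step, and it cannot be repaired under the definition of ``analytic'' you (and the paper, read literally) are using. Your Baire category argument needs the sets $E_n=\{h\in \overline{B}(0,1): \sup_{|t|\le r}|\varphi(f(x_0+th))|\le n\}$ to be closed, and closedness already presupposes some continuity of $h\mapsto \varphi(f(x_0+th))$, which is exactly what is not yet available. Worse, no argument can close this gap: if $X$ is infinite-dimensional and $\phi:X\to\mathbb{C}$ is a discontinuous linear functional, then $t\mapsto \phi(x+th)=\phi(x)+t\,\phi(h)$ is a polynomial in $t$, so $\phi$ is ``analytic'' in the weak G\^ateaux sense of the paper's definition, yet $\phi$ is not even continuous, let alone $\mathcal{C}^1$-Fr\'echet; for this example the sets $E_n$ are dense proper subsets of the ball, confirming they are not closed. (Restricting to finite-dimensional slices and invoking Hartogs does not help either, since $\phi$ is continuous on every finite-dimensional subspace.) The proposition is true only for the stronger notion of analyticity actually used in \cite{HJ}, namely local representation by a power series $\sum_n P_n(h)$ of \emph{continuous} $n$-homogeneous polynomials converging uniformly in norm on a ball (equivalently, G\^ateaux analyticity plus local boundedness). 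With that definition your Baire/uniform-boundedness passage becomes unnecessary and the rest of your forward argument goes through; as written, the key claim ``one concludes that $f$ is norm-bounded on some ball'' is false.
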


\begin{corollary}
Let $X_1,\ldots,X_d$ and $Y$ be complex Banach spaces and 
$M:\mathop{\mathchoice{\raise-0.22em\hbox{\huge $\times$}} {\raise-0.05em\hbox{\Large $\times$}}{\hbox{\large
$\times$}}{\times}}_{\alpha=1}^d X_\alpha \rightarrow Y$ be a continuous multilinear map. Then
$M$ is analytic.
\end{corollary}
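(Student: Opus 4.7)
The plan is to deduce this directly from the two preceding propositions, since the corollary is essentially their composition. The only mild subtlety is that Proposition~\ref{multilinear} speaks of maps whose domain is a Cartesian product $\mathop{\mathchoice{\raise-0.22em\hbox{\huge $\times$}} {\raise-0.05em\hbox{\Large $\times$}}{\hbox{\large $\times$}}{\times}}_{\alpha=1}^d X_\alpha$, whereas Proposition~\ref{analytic} requires an open subset of a single Banach space. So the first preparatory step is to view the domain as a single complex Banach space by equipping $\mathop{\mathchoice{\raise-0.22em\hbox{\huge $\times$}} {\raise-0.05em\hbox{\Large $\times$}}{\hbox{\large $\times$}}{\times}}_{\alpha=1}^d X_\alpha$ with the product topology (for instance the norm $\|(x_1,\ldots,x_d)\|=\max_\alpha \|x_\alpha\|_\alpha$), which is complete because each $X_\alpha$ is complete, and has the same complex vector space structure coordinatewise. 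Under this identification, the whole space is open in itself and plays the role of $U$ in Proposition~\ref{analytic}.

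Once this identification is made, I would apply Proposition~\ref{multilinear} to obtain that $M$ is $\mathcal{C}^{\infty}$-Fr\'echet differentiable on $\mathop{\mathchoice{\raise-0.22em\hbox{\huge $\times$}} {\raise-0.05em\hbox{\Large $\times$}}{\hbox{\large $\times$}}{\times}}_{\alpha=1}^d X_\alpha$, which in particular means it is $\mathcal{C}^{1}$-Fr\'echet differentiable. Since the domain and codomain are both complex Banach spaces, Proposition~\ref{analytic} then immediately yields that $M$ is analytic, concluding the argument.

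I do not anticipate any genuine obstacle: the only thing worth checking explicitly is that the Fr\'echet derivative in the sense of ``one variable'' on the product space coincides with (and is controlled by) the partial Fr\'echet derivatives appearing in Proposition~\ref{multilinear}, so that the $\mathcal{C}^{1}$ hypothesis of Proposition~\ref{analytic} is unambiguously verified. This is standard for the product norm above, since the partial derivatives assemble into a continuous multilinear form and hence into a bounded linear operator on the product space, so no further work is required.
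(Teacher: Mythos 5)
Your proposal is correct and follows exactly the route the paper intends (the corollary is stated without proof as an immediate consequence of Propositions \ref{multilinear} and \ref{analytic}): equip the product with the max norm so it is a single complex Banach space, invoke Proposition \ref{multilinear} for $\mathcal{C}^{\infty}$ (hence $\mathcal{C}^{1}$) Fr\'echet differentiability, and conclude analyticity from Proposition \ref{analytic}. The extra care you take in identifying the product as one Banach space is exactly the right (and only) point to check.
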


\begin{definition}
Let $X$ be a Banach space and $P \in \mathcal{L}(X,X).$  
We say that $P$ is a projection if and only if 
$P\circ P = P$ holds. In this situation we also say that $P$ is a
projection from $X$ onto $P(X):=\mathrm{Im}\,P$ parallel to $\mathrm{Ker}\, P.$
\end{definition}

From now on, we will denote $P \circ P = P^2.$ Observe that if $P$ is a
projection then $id_X-P$ is also a projection. Moreover, $id_X-P$ is parallel
to $\mathrm{Im}\,P.$

Observe that each projection gives rise to a pair of subspaces,
namely $U=\mathrm{Im}\,P$ and $W=\mathrm{Ker}\,P$ such that $X=U\oplus W.$ It allows us to introduce the following definitions.

\begin{definition}
A subspace $U$ of a Banach space $X$ is said to be complemented in $X$ if there is a projection $P\in \mathcal{L}(X,X)$ from $X$ onto $U.$
\end{definition}

\begin{definition}
Let $U$ be a closed subspace of $X.$ 
We say that $U$ is a split subspace of $X$ 
if there exists $W,$ called (topological) complement of
$U$ in $X,$ such that 
$X=U \oplus W$ and $W$ is a closed subspace of
$X.$ Moreover, we will say that $(U,W)$ is a pair of
complementary subspaces of $X.$
\end{definition}

Corresponding to each pair $(U,W)$ of complementary subspaces, there is a
projection $P$ mapping $X$ onto $U$ along $W,$ defined as follows. Since for
each $x$ there exists a unique decomposition $x=u+w,$ where $u\in U$ and $%
w\in W,$ we can define a linear map $P(u+w):=u,$ where $\mathrm{Im}\,P=U$
and $\mathrm{Ker}\,P=W.$ Moreover, $P^{2}=P.$  In
Proposition \ref{characterize_P} it will follow that 
$P\in \mathcal{L}(X,X).$

\begin{definition}
The \emph{Grassmann manifold} of a Banach space $X,$ denoted by $\mathbb{G}(X),$ is the set of split subspaces of $X.$
\end{definition}

$U\in \mathbb{G}(X)$ holds if and only if $U$ is a closed subspace and there
exists a closed subspace $W$ in $X$ such that $X=U\oplus W.$ Observe that $X$
and $\{0\}$ are in $\mathbb{G}(X).$ Moreover, by the proof of Proposition 4.2 of \cite{FHHM}, the following result can be shown.

\begin{proposition}
\label{characterize_P} Let $X$ be a Banach space. The following conditions
are equivalent:

\begin{enumerate}
\item[(a)] $U \in \mathbb{G}(X).$

\item[(b)] $U$ is a closed subspace and there exists $P \in \mathcal{L}(X,X)$ such that $P^2=P$ and $%
\mathrm{Im}\, P = U.$

\item[(c)] There exists $Q \in \mathcal{L}(X,X)$ such that $Q^2=Q$ and $%
\mathrm{Ker}\, Q = U.$
\end{enumerate}
\end{proposition}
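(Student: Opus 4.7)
The plan is to prove the cyclic chain of implications (a) $\Rightarrow$ (b) $\Rightarrow$ (c) $\Rightarrow$ (a). Two of the three implications are purely algebraic and will take just a few lines each; the genuine analytic content lies in the first implication, where one must upgrade a purely algebraic projection to a bounded operator.

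For (a) $\Rightarrow$ (b), I would start with a closed complement $W$ of $U$ given by the definition of $\mathbb{G}(X)$. Every $x \in X$ has a unique decomposition $x = u + w$ with $u \in U$, $w \in W$, so the formula $P(u+w) := u$ defines a linear idempotent with $\mathrm{Im}\,P = U$ and $\mathrm{Ker}\,P = W$. The point is continuity, and the clean way is the closed graph theorem: assume $x_n \to x$ and $Px_n \to y$. Writing $x_n = u_n + w_n$ with $u_n = Px_n \to y$ forces $w_n = x_n - u_n \to x - y$; since $U$ and $W$ are closed, $y \in U$ and $x - y \in W$, and the uniqueness of the decomposition yields $Px = y$. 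Thus the graph of $P$ is closed in $X \times X$, and since $X$ is a Banach space, $P \in \mathcal{L}(X,X)$. This is the main obstacle: without completeness of $X$ and closedness of both $U$ and $W$, the projection need not be continuous.

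The implication (b) $\Rightarrow$ (c) is immediate by setting $Q := id_X - P$. Idempotence follows from $Q^2 = id_X - 2P + P^2 = id_X - P = Q$, continuity from $P \in \mathcal{L}(X,X)$, and $\mathrm{Ker}\,Q = \{x \in X : Px = x\} = \mathrm{Im}\,P = U$, where the last equality uses the identity $Px = x$ for $x \in \mathrm{Im}\,P$ (apply $P$ to $x = Py$).

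For (c) $\Rightarrow$ (a), continuity of $Q$ already makes $U = \mathrm{Ker}\,Q$ a closed subspace. I would propose $W := \mathrm{Im}\,Q$ as candidate complement and observe that $W = \mathrm{Ker}(id_X - Q)$, hence $W$ is closed. The decomposition $x = (x - Qx) + Qx$ shows $X = U + W$, since $Q(x - Qx) = Qx - Q^2 x = 0$, while if $u + w = 0$ with $u \in U$, $w \in W = \{x : Qx = x\}$, applying $Q$ gives $w = 0$, hence $u = 0$ as well. This yields $X = U \oplus W$ with both summands closed, so $U \in \mathbb{G}(X)$, closing the loop.
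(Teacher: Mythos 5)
Your proof is correct and takes the standard route: the paper gives no proof of its own, deferring to Proposition 4.2 of \cite{FHHM}, whose argument is exactly your closed-graph-theorem upgrade of the algebraic projection $P(u+w)=u$ together with the elementary algebraic implications (b) $\Rightarrow$ (c) $\Rightarrow$ (a). You correctly identify the continuity of $P$ as the only analytic content (the paper itself flags this point just before the proposition), so there is nothing to add.
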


Moreover, from Theorem 4.5 in \cite{FHHM}, the following result can be shown.

\begin{proposition}
\label{finite_dim_subspaces} Let $X$ be a Banach space. Then every
finite-dimensional subspace $U$ belongs to $\mathbb{G}(X).$
\end{proposition}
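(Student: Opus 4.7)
The plan is to produce an explicit continuous projection $P \in \mathcal{L}(X,X)$ with $\mathrm{Im}\, P = U$, so that Proposition \ref{characterize_P} yields $U \in \mathbb{G}(X)$. Two ingredients are needed: that $U$ is closed, and that the coordinate functionals associated with a basis of $U$ extend to continuous linear functionals on $X$.

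First, I would recall that in any normed space every finite-dimensional subspace $U$ is automatically closed. This follows from the equivalence of all norms on a finite-dimensional vector space: viewing $U$ as a normed space with the norm inherited from $X$, it is complete, and a complete subspace of a Hausdorff topological vector space is closed.

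Next, I would fix an algebraic basis $\{e_1,\dots,e_n\}$ of $U$ and introduce the associated coordinate linear functionals $\varphi_i^0:U\to\mathbb{K}$, $i=1,\dots,n$, defined by $\varphi_i^0\bigl(\sum_{j=1}^n \lambda_j e_j\bigr)=\lambda_i$. Since $U$ is finite-dimensional, each $\varphi_i^0$ is continuous on $(U,\|\cdot\|_X|_U)$. By the Hahn--Banach theorem, each $\varphi_i^0$ admits a norm-preserving extension $\varphi_i\in X^{\ast}$. I would then define
\begin{equation*}
P:X\longrightarrow X,\qquad P(x)=\sum_{i=1}^{n}\varphi_i(x)\,e_i.
\end{equation*}
By construction $P$ is linear, and the estimate $\|P(x)\|_X\le \sum_{i=1}^n \|\varphi_i\|_{X^\ast}\|e_i\|_X\,\|x\|_X$ shows $P\in\mathcal{L}(X,X)$. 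The identity $\varphi_i(e_j)=\delta_{ij}$ forces $P(e_j)=e_j$ for all $j$, hence $P|_U=\mathrm{id}_U$; this implies both $\mathrm{Im}\,P=U$ and $P^2=P$.

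With this $P$ in hand, condition (b) of Proposition \ref{characterize_P} is satisfied, and therefore $U\in\mathbb{G}(X)$, with closed complement $W=\mathrm{Ker}\,P$. There is no real obstacle here; the only subtlety to be careful about is that the continuity of the coordinate functionals on $U$ relies on the finite dimension of $U$ (via the equivalence of norms), and their extension to $X$ with preserved continuity relies on Hahn--Banach — both steps fail in general for infinite-dimensional closed subspaces, which is exactly why $\mathbb{G}(X)$ is in general a strict subset of the lattice of closed subspaces.
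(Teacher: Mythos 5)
Your proof is correct and is the standard argument (Hahn--Banach extension of the coordinate functionals to build a bounded projection onto $U$), which is precisely the content of Theorem 4.5 in \cite{FHHM} that the paper invokes instead of giving a proof. Nothing further is needed.
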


Let $W$ and $U$ be closed subspaces of a Banach space $X$ such that $%
X=U\oplus W.$ From now on, we will denote by $P_{_{U\oplus W}}$ the
projection onto $U$ along $W.$ 
Then we have $P_{_{W\oplus U}}=id_{X}-P_{_{U\oplus W}}.$ Let $U,U^{\prime
}\in \mathbb{G}(X).$ We say that $U$ and $U^{\prime }$ have a common
complementary subspace in $X$ if $X=U\oplus W=U^{\prime }\oplus W$ for some 
$W\in \mathbb{G}(X).$ The following two results will 
be useful (for the first one see Lemma 2.1 in \cite{DRW}).

\begin{lemma}
\label{Char_Projections} Let $X$ be a Banach space and assume that $W$, $U$,
and $U^{\prime }$ are in $\mathbb{G}(X).$ Then the following statements are
equivalent:

\begin{enumerate}
\item[(a)] $X=U\oplus W=U^{\prime }\oplus W,$ i.e., $U$ and $U^{\prime }$
have a common complement in $X$.

\item[(b)] $P_{_{U\oplus W}}|_{U^{\prime }}:U^{\prime }\rightarrow U$ has an
inverse.
\end{enumerate}

Furthermore, if $Q = \left(P_{_{U \oplus W}}|_{_{U^{\prime }}}\right)^{-1},$
then $Q$ is bounded and $Q=P_{_{U^{\prime }\oplus W}}|_{_{U}}.$
\end{lemma}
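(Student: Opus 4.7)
The plan is to verify the two implications (a)$\Leftrightarrow$(b) by unpacking the definition of the projection $P_{U\oplus W}$ together with the direct-sum conditions $U\cap W=\{0\}$ and $U+W=X$, and then to deduce the ``furthermore'' part from the bounded inverse theorem together with a direct computation of $P^{-1}$ on $U$.

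For (a)$\Rightarrow$(b), I would set $P:=P_{U\oplus W}|_{U'}$ and show it is bijective. Injectivity is immediate: if $P(u')=0$ with $u'\in U'$, then $u'\in\ker P_{U\oplus W}=W$, so $u'\in U'\cap W=\{0\}$ since $X=U'\oplus W$. For surjectivity, take $u\in U$ and use $X=U'\oplus W$ to decompose $u=u'+w$ with $u'\in U'$, $w\in W$; applying $P_{U\oplus W}$ gives $u=P_{U\oplus W}(u')=P(u')$. Conversely, for (b)$\Rightarrow$(a), assuming $P$ is bijective I would check both direct-sum conditions for $U'$ and $W$: if $u'\in U'\cap W$ then $P(u')=P_{U\oplus W}(u')=0$ (because $u'\in W=\ker P_{U\oplus W}$), hence $u'=0$ by injectivity; and for an arbitrary $x\in X=U\oplus W$ with $x=u+w$, surjectivity of $P$ yields $u'\in U'$ with $P(u')=u$, so $u'-u\in\ker P_{U\oplus W}=W$ and therefore $x=u'+(w-(u'-u))\in U'+W$.

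For the ``furthermore'' clause, observe that $U$ and $U'$, being closed subspaces of the Banach space $X$, are themselves Banach spaces, and $P:U'\to U$ is a continuous linear bijection between Banach spaces; Banach's bounded inverse theorem then gives that $Q=P^{-1}\in\mathcal{L}(U,U')$ is bounded. To identify $Q$ with $P_{U'\oplus W}|_U$, I would fix $u\in U$ and use the decomposition from (a) to write $u=u'+w$ with $u'\in U'$, $w\in W$; then $P_{U'\oplus W}(u)=u'$, while $P(u')=P_{U\oplus W}(u-w)=u$, so $Q(u)=u'=P_{U'\oplus W}(u)$.

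I do not expect any serious obstacle: the argument is a bookkeeping exercise with the defining identities of the projections, plus one appeal to the bounded inverse theorem. The only point that requires a little care is to make sure, when invoking the bounded inverse theorem, that $U$ and $U'$ are genuinely Banach spaces in their own right — this is guaranteed by the hypothesis $U,U'\in\mathbb{G}(X)$, which forces them to be closed in $X$.
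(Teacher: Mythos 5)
Your argument is correct and complete: both implications follow from the stated direct-sum decompositions exactly as you write them, and the boundedness of $Q$ is indeed an application of the bounded inverse theorem, legitimate because $U,U'\in\mathbb{G}(X)$ are closed and hence Banach, while $P_{U\oplus W}\in\mathcal{L}(X,X)$ makes the restriction continuous. The paper itself gives no proof of this lemma (it cites Lemma 2.1 of \cite{DRW}), and your proof is the standard one, so there is nothing to reconcile.
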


We recall that an algebra is unital or unitary if it has an identity element with respect to the multipli-cation.

\begin{proposition}\label{niceinverse}
Let $X$ be a Banach space and $U,W \in \mathbb{G}(X)$ be such that 
$X=U\oplus W$ and consider the linear space
$$
\mathcal{L}_{(U,W)}(X,X):=\{ P_{_{W\oplus U}} \circ S \circ  P_{_{U\oplus W}}:S \in \mathcal{L}(X,X)\}.
$$
Then the bounded linear map
$$
\mathcal{L}(U,W) \longrightarrow \mathcal{L}_{(U,W)}(X,X),\quad  L \mapsto P_{_{W\oplus U}} \circ L \circ  P_{_{U\oplus W}}
$$
is an isometry. Moreover, for all $L,L' \in \mathcal{L}_{(U,W)}(X,X)$  
it holds that $L \circ L' = L' \circ L = 0$. Then $\mathcal{L}_{(U,W)}(X,X)$ is a sub-algebra of
the unital Banach algebra $\mathcal{L}(X,X)$ and
$$
\exp(L) = \sum_{n=0}^{\infty}\frac{L^n}{n!} = id_{X} + L \text{ and } \exp(-L)=id_{X} - L = (id_{X} + L)^{-1}. 
$$
\end{proposition}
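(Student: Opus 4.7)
The plan is to unpack the definition: every element of $\mathcal{L}_{(U,W)}(X,X)$ factors through $U$ and lands in $W$, so the assignment $\Phi: L \mapsto P_{W\oplus U}\circ L\circ P_{U\oplus W}$ identifies $\mathcal{L}(U,W)$ with $\mathcal{L}_{(U,W)}(X,X)$. Boundedness of $\Phi$ is clear from $P_{W\oplus U},P_{U\oplus W}\in\mathcal{L}(X,X)$ and submultiplicativity of the operator norm; linearity is immediate. For surjectivity onto $\mathcal{L}_{(U,W)}(X,X)$ I would start from $S\in\mathcal{L}(X,X)$, observe that $L:=P_{W\oplus U}\circ S|_U\in\mathcal{L}(U,W)$, and verify that $P_{W\oplus U}\circ S\circ P_{U\oplus W}=\Phi(L)$ using $P_{U\oplus W}|_U=id_U$.

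For the norm statement, the key computation is that $\Phi(L)$ restricted to $U$ coincides with $L$: for $u\in U$ one has $P_{U\oplus W}(u)=u$ and $L(u)\in W$, so $P_{W\oplus U}(L(u))=L(u)$, giving $\Phi(L)|_U=L$ and thereby $\|L\|_{W\leftarrow U}\leq\|\Phi(L)\|_{X\leftarrow X}$. The reverse inequality is the delicate part: a naive bound by submultiplicativity only yields $\|\Phi(L)\|\leq\|P_{U\oplus W}\|\|L\|$, not equality. To obtain a true isometry, I would compute the operator norm of $\Phi(L)$ on $X$ by parametrising $x=u+w$ with $u\in U,w\in W$ and noting that $\Phi(L)(u+w)=L(u)$, then exploit that the relevant norm of $u\in U$ arising from this extension is precisely the quotient norm induced by the splitting, which matches $\|L\|_{W\leftarrow U}$ through the identification $U\simeq X/W$.

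The algebraic structure is then routine but central: given $L,L'\in\mathcal{L}_{(U,W)}(X,X)$ written as $L=P_{W\oplus U}SP_{U\oplus W}$ and $L'=P_{W\oplus U}S'P_{U\oplus W}$, the composition $L\circ L'$ contains the factor $P_{U\oplus W}\circ P_{W\oplus U}$, which vanishes because $\mathrm{Im}\,P_{W\oplus U}=W=\mathrm{Ker}\,P_{U\oplus W}$. Hence $L\circ L'=0$, and symmetrically $L'\circ L=0$. In particular $\mathcal{L}_{(U,W)}(X,X)$ is closed under composition (trivially, by being a zero-product set) and under sums (by linearity of $\Phi$), so it is a subalgebra of $\mathcal{L}(X,X)$.

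The exponential formulas fall out immediately from nilpotency: since $L^2=L\circ L=0$, every term $L^n/n!$ with $n\geq 2$ in the exponential series vanishes, leaving $\exp(L)=id_X+L$, and the computation $(id_X+L)(id_X-L)=id_X-L^2=id_X$ simultaneously identifies $\exp(-L)=id_X-L$ as the two-sided inverse of $id_X+L$. The main obstacle I foresee is the exact isometry claim rather than a mere topological isomorphism; the subalgebra property and the exponential identity are then automatic consequences of the single observation $P_{U\oplus W}\circ P_{W\oplus U}=0$.
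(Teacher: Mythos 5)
Your handling of linearity, surjectivity, the zero--product identity $L\circ L'=0$ via $P_{U\oplus W}\circ P_{W\oplus U}=0$, the subalgebra property, and the exponential formulas is correct and coincides with the paper's own (very terse) argument. The gap is in the isometry. You rightly single it out as the delicate step --- the paper simply asserts the identity $\|L\|_{W\leftarrow U}=\|P_{W\oplus U}\circ L\circ P_{U\oplus W}\|_{X\leftarrow X}$ without justification --- but your proposed repair does not close it. Writing $\Phi(L):=P_{W\oplus U}\circ L\circ P_{U\oplus W}$, your parametrisation gives $\Phi(L)(u+w)=L(u)$, hence $\|\Phi(L)\|_{X\leftarrow X}=\sup\{\|L(u)\|:u=P_{U\oplus W}(x),\ \|x\|\leq 1\}$, which is the operator norm of $L$ when $U$ carries the quotient norm $u\mapsto\inf_{w\in W}\|u+w\|$ transported from $X/W$. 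That quotient norm is in general \emph{strictly smaller} than the subspace norm defining $\|L\|_{W\leftarrow U}$; the two agree exactly when $\|P_{U\oplus W}\|=1$. So the final clause of your argument (``which matches $\|L\|_{W\leftarrow U}$'') is the step that fails.

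In fact the isometry claim of the proposition is false for a general complemented pair: in $X=\mathbb{R}^{2}$ with the Euclidean norm, $U=\mathrm{span}\{(1,0)\}$, $W=\mathrm{span}\{(1,1)\}$ and $L(1,0)=(1,1)$, one has $P_{U\oplus W}(x_{1},x_{2})=(x_{1}-x_{2},0)$, so $\|L\|_{W\leftarrow U}=\sqrt{2}$ while $\|\Phi(L)\|_{X\leftarrow X}=\sup_{\|x\|\leq 1}|x_{1}-x_{2}|\cdot\sqrt{2}=2$. What is true --- and what your two inequalities already establish --- is $\|L\|_{W\leftarrow U}\leq\|\Phi(L)\|_{X\leftarrow X}\leq\|P_{U\oplus W}\|\,\|L\|_{W\leftarrow U}$, i.e.\ $\Phi$ is a Banach-space isomorphism onto $\mathcal{L}_{(U,W)}(X,X)$, and an isometry precisely when the projection has norm one (e.g.\ orthogonal complements in a Hilbert space). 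Since the rest of the paper only uses the identification and the algebra structure, not the exact equality of norms, the honest conclusion of your approach is the isomorphism with these two constants; do not try to force the quotient-norm argument to produce the isometry, because no argument can.
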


\begin{proof}
Clearly, the map is a linear isomorphism and since
$$
\|L\|_{W \leftarrow U} = \|P_{_{W\oplus U}} \circ L \circ  P_{_{U\oplus W}}\|_{X \leftarrow X},
$$
it is an isometry. For $L = P_{_{W\oplus U}} \circ S \circ  P_{_{U\oplus W}} \in \mathcal{L}_{(U,W)}(X,X)$ and $L' = P_{_{W\oplus U}} \circ S' \circ  P_{_{U\oplus W}} \in \mathcal{L}_{(U,W)}(X,X)$
we have
$$
L \circ L' = P_{_{W\oplus U}} \circ S \circ  P_{_{U\oplus W}} \circ P_{_{W\oplus U}} \circ S' \circ  P_{_{U\oplus W}} = 0,
$$
because $ P_{_{U\oplus W}} \circ P_{_{W\oplus U}} = 0,$ then the second statement holds and the final statement follows in a straightforward way.
\end{proof}

\bigskip

Next, we recall the definition of a Banach manifold.

\begin{definition}
\label{banach_manifold_definition} Let $\mathbb{M}$ be a set. An atlas of
class $C^p\, (p \ge 0)$ or analytic on $\mathbb{M} $ is a family of charts with some
indexing set $A,$ namely $\{(M_{\alpha},u_{\alpha}): \alpha \in A\},$ having
the following properties:

\begin{enumerate}
\item[AT1] $\{M_{\alpha}\}_{\alpha \in A}$ is a covering\footnote{%
The condition of an \emph{open} covering is not needed, see \cite{Lang}.} of 
$\mathbb{M},$ that is, $M_{\alpha} \subset \mathbb{M}$ for all $\alpha \in A$
and $\cup_{\alpha \in A}M_{\alpha}=\mathbb{M}.$

\item[AT2] For each $\alpha \in A, (M_{\alpha},u_{\alpha})$ stands for a
bijection $u_{\alpha}:M_{\alpha} \rightarrow U_{\alpha}$ of $M_{\alpha}$
onto an open set $U_{\alpha}$ of a Banach space $X_{\alpha},$ and for any $%
\alpha$ and $\beta$ the set $u_{\alpha}(M_{\alpha} \cap M_{\beta})$ is open
in $X_{\alpha}.$

\item[AT3] Finally, if we let $M_{\alpha} \cap M_{\beta} = M_{\alpha\beta}$
and $u_{\alpha}(M_{\alpha\beta} )= U_{\alpha\beta},$ the transition mapping $%
u_{\beta}\circ u_{\alpha}^{-1}:U_{\alpha\beta} \rightarrow U_{\beta \alpha}$
is a diffeomorphism of class $C^p$ $(p \ge 0)$ or analytic.
\end{enumerate}
\end{definition}

Since different atlases can give the same manifold, we say that two atlases
are \emph{compatible} if each chart of one atlas is compatible with the
charts of the other atlas in the sense of AT3. One verifies that the
relation of compatibility between atlases is an equivalence relation.

\begin{definition}
An equivalence class of atlases of class $C^{p}$ on $\mathbb{M}$ is said to
define a structure of a $C^{p}$-Banach manifold on $\mathbb{M},$ and hence
we say that $\mathbb{M}$ is a Banach manifold. In a similar way, if an
equivalence class of atlases is given by analytic maps, then we say that $%
\mathbb{M}$ is an analytic Banach manifold. If $X_{\alpha }$ is a Hilbert
space for all $\alpha \in A,$ then we say that $\mathbb{M}$ is a Hilbert
manifold.
\end{definition}

In condition AT2 we do not require that the Banach spaces are the same for
all indices $\alpha,$ or even that they are isomorphic. If $X_{\alpha}$ is
linearly isomorphic to some Banach space $X$ for all $\alpha,$ we have the
following definition.

\begin{definition}
Let $\mathbb{M}$ be a set and $X$ be a Banach space. We say that $\mathbb{M}$
is a $C^{p}$ (respectively, analytic) Banach manifold modelled on $X$ if there exists an atlas of
class $C^{p}$ (respectively, analytic) over $\mathbb{M}$ with $X_{\alpha }$ linearly isomorphic to $X$
for all $\alpha \in A.$
\end{definition}

\begin{example}
Every Banach space is a Banach manifold modelled on itself (for a Banach
space $Y$, simply take $(Y,id_{Y})$ as atlas, where $id_{Y}$ is the identity
map on $Y$). We would point out that the trivial linear space 
$\{\mathbf{0}\}$ is also a (trivial) Banach manifold modelled on
itself. In particular, the set of all bounded linear maps $\mathcal{L}%
(X,X)$ of a Banach space $X$ is also a Banach manifold modelled on itself.
\end{example}

If $X$ is a Banach space, then the set of all bounded linear automorphisms
of $X$ will be denoted by 
\begin{equation*}
\mathrm{GL}(X):=\left\{ A\in \mathcal{L}(X,X):A\text{ invertible}\right\} .
\end{equation*}

Before giving the next examples, we introduce
the following definition.

\begin{definition}
Let $X$ and $Y$ be two Banach manifolds. Let $F:X \rightarrow Y$ be a map.
We shall say that $F$ is a $\mathcal{C}^r$ (respectively, analytic) \emph{%
morphism} if given $x \in X$ there exists a chart $(U,\varphi)$ at $x$ and a
chart $(W,\psi)$ at $F(x)$ such that $F(U) \subset W,$ and the map 
\begin{equation*}
\psi \circ F \circ \varphi^{-1}:\varphi(U) \rightarrow \psi(W)
\end{equation*}
is a $\mathcal{C}^r$-Fr\'echet differentiable (respectively, analytic) map.
\end{definition}

\begin{example}\label{inverse_a}
If $X$ is a Banach space, then $\mathrm{GL}(X)$ is a Banach manifold
modelled on $\mathcal{L}(X,X),$ because it is an open set in $\mathcal{L}%
(X,X).$ Moreover, the map $A\mapsto A^{-1}$ is analytic 
(see 2.7 in \cite{Upmeier}).
\end{example}

\begin{example}\label{exp_a}
If $X$ is a Banach space, then the exponential map 
$\exp: \mathcal{L}(X,X) \rightarrow \mathrm{GL}(X)$ defined by
$\exp(A) = \sum_{n=0}^{\infty} \frac{A^n}{n!}$ is an analytic
map (see 2.8 in \cite{Upmeier}).
\end{example}

\begin{example}\label{multiplication_a}
If $X$ is a Banach space, then $\mathrm{GL}(X) \times \mathrm{GL}(X)$
is a Banach manifold and the multiplication map
$m: \mathrm{GL}(X) \times \mathrm{GL}(X) \rightarrow \mathrm{GL}(X)$ defined by
$m(A,B) = A\circ B$ is an analytic map (see Theorem 2.42(ii) in \cite{B}).
\end{example}

\begin{example}\label{local_lee_group}
Let $X$ be a Banach space and $U,W \in \mathbb{G}(X)$ be such that $X=U\oplus W.$
From Proposition \ref{niceinverse} we know that $\mathcal{L}_{(U,W)}(X,X)$ is a 
sub-algebra of the Banach Algebra $\mathcal{L}(X,X).$ Then from
Theorem 3.5 of \cite{B} we have that
$$
\mathrm{GL}(\mathcal{L}_{(U,W)}(X,X)):=\{\exp(L): L \in \mathcal{L}_{(U,W)}(X,X)\} \subset \mathrm{GL}(X)
$$
is a closed Lie subgroup with associated Lie algebra $\mathcal{L}_{(U,W)}(X,X)$ and it is
also an analytic Banach manifold modelled into itself. Since
$\exp(L)=id_X+L$ then $\exp(L)$ is a linear isomorphism between the
linear subspace $U$ 
and $\exp(L)(U) = \left\{ (id_X + L)(u):u\in U\right\}.$ We remark that for
all $x \in X$ we have 
$$
\exp(L)(x) = \exp(L)(u+w) = \exp(L)(u)+w, \quad (x=u+w,\, u\in U \text{ and } w\in W  ),
$$ 
because $L(w)=0,$ hence $\exp(L)|_{W} = id_{W}$  and $\exp(L)|_{U} = id_{U}+L.$
Moreover,
the maps
$\exp: \mathcal{L}_{(U,W)}(X,X) \rightarrow \mathrm{GL}(\mathcal{L}_{(U,W)}(X,X)),$
$m:\mathrm{GL}(\mathcal{L}_{(U,W)}(X,X)) \times 
\mathrm{GL}(\mathcal{L}_{(U,W)}(X,X)) \rightarrow \mathrm{GL}(\mathcal{L}_{(U,W)}(X,X))$ and the 
map $\exp(L) \mapsto \exp(-L)$ are analytic.
\end{example}

The next example is a Banach manifold not modelled on a particular Banach
space.

\begin{example}[Grassmann--Banach manifold]
\label{Grassmann_Example} Let $X$ be a Banach space. Then, following \cite%
{Douady66} (see also \cite{Upmeier} and \cite{MRA}), it is possible to
construct an atlas for $\mathbb{G}(X).$ To do this, let us denote one of the
complements of $U\in \mathbb{G}(X)$ by $W, $ i.e., $X=U\oplus W$. Then we
define the \emph{Banach Grassmannian of $U$ relative to $W$} by 
\begin{equation*}
\mathbb{G}(W,X):=\left\{ V\in \mathbb{G}(X):X=V\oplus W\right\}.
\end{equation*}%
By using Lemma~\ref{Char_Projections} it is possible to introduce a
bijection 
\begin{equation*}
\Psi _{U\oplus W}:\mathbb{G}(W,X)\longrightarrow \mathcal{L}(U,W)
\end{equation*}%
defined by 
\begin{equation*}
\Psi _{U\oplus W}(U^{\prime }) = P_{W \oplus U}|_{U^{\prime }} \circ
P_{U^{\prime }\oplus W}|_{U} = P_{W \oplus U}|_{U^{\prime }} \circ
(P_{U\oplus W}|_{U^{\prime }})^{-1} .
\end{equation*}
It can be shown that the inverse 
\begin{equation*}
\Psi _{U\oplus W}^{-1}:\mathcal{L}(U,W)\longrightarrow \mathbb{G}(W,X)
\end{equation*}%
is given by 
\begin{equation*}
\Psi _{U\oplus W}^{-1}(L)=G(L):=\left\{ (id_X + L)(u):u\in U\right\}.
\end{equation*}%
From Proposition \ref{niceinverse} we can identify
$\mathcal{L}(U,W)$ with $\mathcal{L}_{(U,W)}(X,X).$ Hence can write $$(id_X+L)=\exp(L),$$ which following  
Example \ref{local_lee_group} can be proved to be a linear isomorphism from $U$ to $G(L).$
Observe that $G(0)=U$ and $G(L)\oplus W=X$ for all $L\in \mathcal{L}(U,W).$
Finally, to prove that this manifold is analytic we need to describe the overlap
maps. To explain the behaviour of one overlap map, assume that $X=U\oplus W
= U^{\prime }\oplus W^{\prime }$ and the existence of $U^{\prime \prime }\in 
\mathbb{G}(W,X) \cap \mathbb{G}(W^{\prime },X).$ Let $L \in \mathcal{L}(U,W)$
and $L' \in \mathcal{L}(U',W')$ be such that 
\begin{equation*}
\Psi_{U \oplus W}^{-1}(L) = G(L) = U^{\prime \prime} = G(L')=\Psi_{U' \oplus W'}^{-1}(L').
\end{equation*}
Then it follows
that
\begin{equation*}
X = U\oplus W = U^{\prime }\oplus W^{\prime }= G(L) \oplus W = G(L) \oplus
W^{\prime }.
\end{equation*}
Finally, it can be shown that the map 
$(\Psi_{U^{\prime }\oplus W^{\prime }} \circ \Psi_{U \oplus W}^{-1}): 
\mathcal{L}(U,W) \rightarrow \mathcal{L}(U^{\prime },W^{\prime })$ 
given by 
\begin{align*}
(\Psi_{U^{\prime }\oplus W^{\prime }} \circ \Psi_{U \oplus W}^{-1})(L)  = 
\Psi_{U^{\prime }\oplus W^{\prime }} (\exp(L)(U))=L'
\end{align*}
is analytic. 
Then we have that the collection $\{\mathbb{G}%
(W,X),\Psi _{U\oplus W}\}_{U\in \mathbb{G}(X)}$ is an analytic atlas, and therefore, $\mathbb{G%
}(X)$ is an analytic Banach manifold. In particular, for each $U \in \mathbb{%
G}(X)$ the set $\mathbb{G}(W,X)$ is a 
Banach manifold modelled on $\mathcal{L}(U,W).$ Observe that if $%
U$ and $U^{\prime }$ are finite-dimensional subspaces of $X$ such that $\dim
U \neq \dim U^{\prime }$ and $X=U \oplus W = U^{\prime }\oplus W^{\prime },$
then $\mathcal{L}(U,W)$ is not linearly isomorphic to $\mathcal{L}(U^{\prime
},W^{\prime }).$
\end{example}

\begin{example}
\label{fixed_rank_grassmann} Let $X$ be a Banach space. From Proposition~\ref%
{finite_dim_subspaces}, every finite-dimensional subspace belongs to $%
\mathbb{G}(X).$ It allows to introduce $\mathbb{G}_{n}(X),$ the space of all 
$n$-dimensional subspaces of $X$ $(n\geq 0).$ It can be shown (see \cite{MRA}%
) that $\mathbb{G}_{n}(X)$ is a connected component of $\mathbb{G}(X),$ and
hence it is also a Banach manifold modelled on $\mathcal{L}(U,W),$ here $%
U\in \mathbb{G}_{n}(X)$ and $X=U\oplus W.$ Moreover, 
\begin{equation*}
\mathbb{G}_{\leq r}(X):=\bigcup_{n\leq r}\mathbb{G}_{n}(X)
\end{equation*}%
is also a Banach manifold for each fixed $r<\infty .$
\end{example}

The next example introduces the Banach-Grassmannian manifold for a normed
(non-Banach) space. To the authors knowledge there is no reference in the
literature about this (non-trivial) Banach manifold structure. We need the
following lemma.

\begin{lemma}
\label{normed_G} Assume that $(X,\Vert \cdot \Vert )$ is a normed space and
let $\overline{X}$ be the Banach space obtained as the completion of $X.$
Let $U \in \mathbb{G}_n(\overline{X})$ be such that $U \subset X$ and $%
\overline{X}=U\oplus W$ for some $W \in \mathbb{G}(\overline{X}).$ Then
every subspace $U^{\prime }\in \mathbb{G}(W,\overline{X})$ is a subspace of $%
X,$ that is, $U^{\prime }\subset X.$
\end{lemma}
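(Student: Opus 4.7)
The plan is to exploit the Banach--Grassmannian chart of Example~\ref{Grassmann_Example}. Every $U' \in \mathbb{G}(W, \overline{X})$ can be written uniquely as $U' = G(L) = \{(id_{\overline{X}} + L)(u) : u \in U\}$ for some $L \in \mathcal{L}(U,W)$, so the conclusion $U' \subset X$ reduces to $L(U) \subset X$ (using $U \subset X$). Fixing a basis $\{e_1,\ldots,e_n\}$ of $U$ with each $e_i \in X$, it is enough to prove $L(e_i) \in X$ for $i = 1,\ldots,n$.

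As a preparatory step I would show that $X \cap W$ is dense in $W$. Given $w \in W$, density of $X$ in $\overline{X}$ yields $x_k \in X$ with $x_k \to w$, and the continuity of the projections $P_{U \oplus W}, P_{W \oplus U} \in \mathcal{L}(\overline{X}, \overline{X})$ from Proposition~\ref{characterize_P}, together with $P_{U \oplus W}(w) = 0$, gives
\[
P_{W \oplus U}(x_k) \;=\; x_k - P_{U \oplus W}(x_k) \;\in\; X \cap W
\]
(since $P_{U \oplus W}(x_k) \in U \subset X$) and $P_{W \oplus U}(x_k) \to w$. Applied to $w = L(e_i)$ this exhibits each $L(e_i)$ as a limit of a sequence in $X \cap W$, so that a priori $L(e_i) \in \overline{X \cap W}$.

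The \emph{decisive step}, which I expect to be the main obstacle, is to promote this density statement into exact membership $L(e_i) \in X$ rather than merely in its closure. Here I would use the finite-dimensionality of $U'$ in an essential way, combined with Lemma~\ref{Char_Projections} and the isometric identification of $\mathcal{L}(U,W)$ with $\mathcal{L}_{(U,W)}(X,X)$ from Proposition~\ref{niceinverse}. Starting from approximations $x_k^{(i)} \in X$ to $e_i + L(e_i)$, the decomposition $x_k^{(i)} = P_{U \oplus W}(x_k^{(i)}) + P_{W \oplus U}(x_k^{(i)})$ places both components inside $X$; the finite-dimensional isomorphism $P_{U \oplus W}|_{U'} : U' \to U$ should then let one correct these approximations by a small perturbation in $\mathcal{L}(U,W)$ and recover $n$ linearly independent exact representatives in $U' \cap X$. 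A dimension count on the $n$-dimensional space $U'$ would then force $U' \cap X = U'$. The subtlety is that one must extract \emph{exact} elements of $U'$, not merely elements converging to $U'$ in $\overline{X}$; it is precisely the finiteness of $\dim U'$ together with the rigidity of the chart map $\Psi_{U \oplus W}$ that makes this passage possible.
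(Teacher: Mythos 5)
Your reduction to showing $L(e_i)\in X$ and your preparatory observation that $X\cap W$ is dense in $W$ are both correct, and they coincide with the opening of the paper's own argument (which likewise records $X=U\oplus(W\cap X)$ with $W\cap X$ dense in $W$). But your proof stops exactly where it would have to start: the ``decisive step'' is not an argument, it is a description of what an argument would have to achieve. A ``small perturbation in $\mathcal{L}(U,W)$'' changes the subspace $G(L)$ itself, so it cannot certify membership of the \emph{given} $U'$ in $X$; and no mechanism is identified that would upgrade ``$L(e_i)$ is a limit of elements of $X\cap W$'' --- which is automatic and carries no information, precisely because $X\cap W$ is dense in all of $W$ --- to the exact statement ``$L(e_i)\in X$''.

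Moreover, this gap cannot be closed, because the step you are trying to force is false in general. Take $X=c_{00}$, the finitely supported sequences with the $\ell^{2}$ norm, so that $\overline{X}=\ell^{2}$; let $U=\mathrm{span}\{e_{1}\}\subset X$ and $W=\{x\in\ell^{2}:x_{1}=0\}$, so $\overline{X}=U\oplus W$. For $v=\sum_{k\geq 2}2^{-k}e_{k}\in W$, the subspace $U'=\mathrm{span}\{e_{1}+v\}=G(L)$ with $L(e_{1})=v$ satisfies $U'\oplus W=\ell^{2}$, i.e.\ $U'\in\mathbb{G}(W,\overline{X})$, yet $e_{1}+v\notin c_{00}$ and in fact $U'\cap X=\{0\}$. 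So there are no ``exact representatives in $U'\cap X$'' to recover, and the dimension count you appeal to has nothing to count. For what it is worth, the paper's own proof founders on the same rock: it asserts $X=(U'\cap X)\oplus(W\cap X)$ without justification, and in the example above this identity fails, since $P_{U'\oplus W}(e_{1})=e_{1}+v\notin X$. You located the crux correctly, but neither your sketch nor the published argument crosses it; as stated, the conclusion requires an additional hypothesis (for instance, restricting attention from the outset to those $U'\in\mathbb{G}(W,\overline{X})$ that are already contained in $X$).
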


\begin{proof}
First of all observe that $X=U\oplus (W\cap X)$ where $W\cap X$ is a linear
subspace dense in $W=W\cap \overline{X}.$ Assume that the lemma is not true.
Then there exists $U^{\prime }\in \mathbb{G}(W,\overline{X})$ such that $%
U^{\prime }\oplus W=\overline{X}$ and $U^{\prime }\cap X\neq U^{\prime }.$
Clearly $U^{\prime }\cap X\neq \{0\},$ otherwise $W\cap X=X$ 
and hence $U=\{0\},$ a contradiction. We have $X=(U^{\prime }\cap X)\oplus (W\cap X),$ which
implies $\overline{X}=(U^{\prime }\cap X)\oplus W,$ that is, $U^{\prime }\cap X \in \mathbb{G}(W,\overline{X})$, a contradiction with $\dim (U^{\prime }\cap X) < \dim U^{\prime } = n.$ Thus 
the lemma follows.
\end{proof}

\begin{example}
\label{normed_grassmann} Assume that $(X,\Vert \cdot \Vert )$ is a normed
space and let $\overline{X}$ be the Banach space obtained as the completion
of $X.$ We define the set $\mathbb{G}_n(X)$ as follows. We say that $U\in 
\mathbb{G}_{n}(X)$ if and only if $U\in \mathbb{G}_{n}(\overline{X})$ and $%
U\subset X.$ Then $\mathbb{G}_{n}(X)$ is also a Banach manifold. To see this
observe that, by Lemma~\ref{normed_G}, for each $U \in \mathbb{G}_{n}(X)$ 
such that $\overline{X}=U\oplus W$ for some $W \in \mathbb{G}(\overline{X}),
$ we have $\mathbb{G}(W,\overline{X}) \subset \mathbb{G}_n(X).$ Then the
collection $\{\Psi _{U\oplus W},\mathbb{G}(W,\overline{X})\}_{U\in \mathbb{G}%
_{n}(X)}$ is an analytic atlas on $\mathbb{G}_{n}(X),$ and therefore, $%
\mathbb{G}_{n}(X)$ is an analytic Banach manifold modelled on $\mathcal{L}%
(U,W),$ here $U\in \mathbb{G}_{n}(X)$ and $\overline{X}=U\oplus W.$
Moreover, as in Example~\ref{fixed_rank_grassmann}, we can define a Banach
manifold $\mathbb{G}_{\leq r}(X)$ for each fixed $r<\infty .$
\end{example}

Let $\mathbb{M}$ be a Banach manifold of class $\mathcal{C}^{p}$ $(p\geq 1)$
or analytic. Let $m$ be a point of $\mathbb{M}$. We consider triples $(U,\varphi ,v)$
where $(U,\varphi )$ is a chart at $m$ and $v$ is an element of the vector
space in which $\varphi (U)$ lies. We say that two of such triples $%
(U,\varphi ,v)$ and $(V,\psi ,w)$ are \emph{equivalent} if the derivative of 
$\psi \circ \varphi ^{-1}$ at $\varphi (m)$ maps $v$ on $w.$ Thanks to the chain
rule it is an equivalence relation. An equivalence class of such triples is
called a \emph{tangent vector of $\mathbb{M}$ at $m$.}

\begin{definition}
\label{Def T}The set of such tangent vectors is called the \emph{tangent space
of $\mathbb{M}$ at $m$} and it is denoted by $\mathbb{T}_{m}(\mathbb{M}).$
\end{definition}

Each chart $(U,\varphi )$ determines a bijection of $\mathbb{T}_{m}(\mathbb{M%
})$ on a Banach space, namely the equivalence class of $(U,\varphi ,v)$
corresponds to the vector $v.$ By means of such a bijection it is possible
to equip $\mathbb{T}_{m}(\mathbb{M})$ with the structure of a topological
vector space given by the chart, and it is immediate that this structure is
independent of the selected chart.

\begin{example}
If $X$ is a Banach space, then $\mathbb{T}_{x}(X)=X$ for all $x\in X.$
\end{example}

\begin{example}
Let $X$ be a Banach space and take $A\in \mathrm{GL}(X).$ Then $\mathbb{T}%
_{A}(\mathrm{GL}(X))=\mathcal{L}(X,X).$
\end{example}

\begin{example}
For $U\in \mathbb{G}(X)$ such that $X=U\oplus W$ for some $W \in \mathbb{G}%
(X)$, we have $\mathbb{T}_{U}(\mathbb{G}(X))=\mathcal{L}(U,W).$
\end{example}

\begin{example}
\label{unit_sphere} For a Hilbert space $X$ with
associated inner product $\langle \cdot ,\cdot \rangle $ and norm $\Vert
\cdot \Vert ,$ its unit sphere denoted by 
\begin{equation*}
\mathbb{S}_{X}:=\{x\in X:\Vert x\Vert =1\}
\end{equation*}%
is a Hilbert manifold of co-dimension one. Moreover, for each $x\in \mathbb{S}%
_{X},$ its tangent space is 
\begin{equation*}
\mathbb{T}_{x}(\mathbb{S}_{X})=\mathrm{span}\,\{x\}^{\bot }=\{x^{\prime }\in
X:\langle x,x^{\prime }\rangle =0\}.
\end{equation*}
\end{example}

\section{The manifold of tensors in Tucker format with fixed rank}
\label{sec:banach_manifold_tucker_fixed_rank}

The MCTDH method is based on the construction of approximations of the wave function which,
at every time $t,$ lie in the algebraic tensor space
$\left._a \bigotimes_{\alpha = 1}^d V_{\alpha} \right.$ where 
$V_{\alpha} = L^2(\mathbb{R}^3)$ for $\alpha=1,2,\ldots,d$
(see \cite{Lubish}). Clearly, this set is a linear space. However it is not clear whether or not it is a (Hilbert/Banach) manifold, because it is a dense subspace of the Hilbert tensor space 
$L^2(\mathbb{R}^{3d}).$ In this section, we will show that every algebraic tensor product of
normed spaces can be seen as a Banach-Grassmann-like manifold.

\subsection{Tensor Spaces and the tensor product map}
\label{Tensor_TBF}

All along this paper we consider a finite index set $D:=\{1,2,\ldots,d\}$  of `spatial directions', with 
$d \ge 2$. Concerning the definition of the algebraic tensor space $_{a}\bigotimes
_{\alpha \in D} V_{\alpha}$ generated from vector spaces $V_{\alpha}$ $\left(\alpha \in D \right)$, we refer to Greub \cite{Greub}. As underlying field we choose $%
\mathbb{R},$ but the results hold also for $\mathbb{C}$. The suffix `$a$' in 
$_{a}\bigotimes_{\alpha \in D} V_{\alpha}$ refers to the `algebraic' nature. By
definition, all elements of 
\begin{equation*}
\mathbf{V}_D:=\left. _{a}\bigotimes_{\alpha \in D} V_{\alpha}\right.
\end{equation*}
are \emph{finite} linear combinations of elementary tensors $\mathbf{v}%
=\bigotimes_{\alpha \in D} v_{\alpha}$ $\left( v_{\alpha}\in V_{\alpha}\right) .$  
In the
sequel, the index sets $D\backslash \{\alpha\}$ will appear. Here, we use the
abbreviations%
\begin{equation*}
\mathbf{V}_{[\alpha]}:=\left. _{a}\bigotimes_{\beta \neq \alpha}V_{\beta}\right. \text{,\qquad
where }\bigotimes_{\beta \neq \alpha}\text{ means}\bigotimes_{\beta\in D\backslash \{\alpha\}}.
\label{(V[j]}
\end{equation*}%
Similarly, elementary tensors $\bigotimes_{\beta \neq \alpha}v_{\beta}$ are denoted by $%
\mathbf{v}_{[\alpha]}$. 
We notice that there exists a linear isomorphism 
$\Phi_{\alpha}:  \mathbf{V}_D \longrightarrow V_{\alpha} \left._a\otimes\right. \mathbf{V}_{[\alpha]}$ for each $\alpha \in D,$
and in order to simplify notation we will identify along the text a tensor
$\mathbf{v} \in \mathbf{V}_D$ with $\Phi_{\alpha}(\mathbf{v}) \in V_{\alpha} \left._a\otimes\right. \mathbf{V}_{[\alpha]}.$
This allows us to write $\mathbf{v} \in  \mathbf{V}_D$ as well as $\mathbf{v} \in V_{\alpha} \left._a\otimes\right. \mathbf{V}_{[\alpha]}$
for $\alpha \in D.$ Moreover, by the universal property of the tensor product,
there exists a unique multilinear map, also denoted by $\bigotimes$
\begin{equation*}
\bigotimes :\mathop{\mathchoice{\raise-0.22em\hbox{\huge $\times$}}
{\raise-0.05em\hbox{\Large $\times$}}{\hbox{\large $\times$}}{\times}}%
_{\alpha \in D} V_{\alpha} 
\longrightarrow%
\left. _{a}\bigotimes_{\alpha \in D} V_{\alpha}\right.,
\end{equation*}
defined by $\bigotimes\left( (v_{1},\ldots,v_{d})\right)
=\bigotimes_{\alpha \in D}v_{\alpha}$ and such that for each multilinear map
$M: \mathop{\mathchoice{\raise-0.22em\hbox{\huge $\times$}}
{\raise-0.05em\hbox{\Large $\times$}}{\hbox{\large $\times$}}{\times}}%
_{\alpha \in D} V_{\alpha} 
\longrightarrow Z,$ where $Z$ is a given vector space, 
there exists a unique map $\widehat{M}:\mathbf{V}_D \rightarrow Z$ such that
$ M = \widehat{M} \circ \bigotimes.$ 
The following notations, definitions and results will be useful.

\bigskip

Let $(V_{\alpha},\|\cdot\|_{\alpha})$ be normed spaces for $\alpha \in D$ 
and assume that $\|\cdot\|$ is a norm on the tensor space $\mathbf{V}_D = \left.
_{a}\bigotimes_{\alpha \in D}V_{\alpha}\right..$ Then consider the tensor product map
\begin{equation}
\bigotimes :\left(\mathop{\mathchoice{\raise-0.22em\hbox{\huge $\times$}}
{\raise-0.05em\hbox{\Large $\times$}}{\hbox{\large $\times$}}{\times}}%
_{\alpha \in D} V_{\alpha},\left\Vert \cdot\right\Vert_{\times} \right) 
\longrightarrow%
\bigg(
\left. _{a}\bigotimes_{\alpha \in D}V_{\alpha}\right. ,\left\Vert \cdot\right\Vert
\bigg)
,  \label{bigotimes}
\end{equation}
where the product space $\mathop{\mathchoice{\raise-0.22em\hbox{\huge $\times$}}
{\raise-0.05em\hbox{\Large $\times$}}{\hbox{\large $\times$}}{\times}}%
_{\alpha \in D} V_{\alpha}$ is equipped with the
product topology induced by the maximum norm $\|(v_1,\hdots,v_d)\|_{\times} =
\max_{\alpha \in D} \|v_{\alpha}\|_{\alpha}.$ Next, we discuss 
the conditions for having the  Fr\'echet differentiability  of 
the tensor product map (\ref{bigotimes}). 
The next result is a consequence of Proposition \ref{multilinear}.

\begin{proposition}
\label{Diff_otimes} Let $(V_{\alpha},\|\cdot\|_{\alpha})$ be normed spaces for $\alpha \in D.$ Assume that $\|\cdot\|$ is a norm on the tensor space $\mathbf{V}_D = \left.
_{a}\bigotimes_{\alpha \in D}V_{\alpha}\right.$ such that the tensor
product map  (\ref{bigotimes})
is continuous. Then it is also 
$\mathcal{C}^{\infty}$-Fr\'{e}chet differentiable and its differential is given by 
\begin{equation*}
D\left( \bigotimes (v_{1},\ldots ,v_{d})\right) (w_{1},\ldots
,w_{d})=\sum_{\alpha \in D}v_{1}\otimes \ldots \otimes v_{\alpha-1}\otimes
w_{\alpha}\otimes v_{\alpha+1}\otimes \cdots v_{d}.
\end{equation*}
\end{proposition}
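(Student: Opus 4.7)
The plan is to invoke Proposition \ref{multilinear} directly. The tensor product map $\bigotimes$ is, by the very universal property stated just above the proposition, the canonical multilinear map from $\mathop{\mathchoice{\raise-0.22em\hbox{\huge $\times$}}{\raise-0.05em\hbox{\Large $\times$}}{\hbox{\large $\times$}}{\times}}_{\alpha \in D} V_\alpha$ into $\mathbf{V}_D$. The hypothesis of the present proposition supplies continuity of this multilinear map with respect to the maximum norm $\|\cdot\|_\times$ on the domain and $\|\cdot\|$ on the codomain. Hence the hypotheses of Proposition \ref{multilinear} are met verbatim, yielding $\mathcal{C}^\infty$-Fr\'echet differentiability (together with the vanishing $D^k\bigotimes = 0$ for $k > d$, which is a free bonus).

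For the stated formula of the differential, I would simply compute it: by multilinearity,
\begin{equation*}
\bigotimes\bigl((v_1,\ldots,v_d) + t(w_1,\ldots,w_d)\bigr)
= \sum_{S \subseteq D} t^{|S|} \bigotimes_{\alpha \in D} u^{S}_\alpha,
\end{equation*}
where $u^{S}_\alpha = w_\alpha$ if $\alpha \in S$ and $u^{S}_\alpha = v_\alpha$ otherwise. Subtracting the $S=\emptyset$ term and isolating the $|S|=1$ contributions gives precisely
\begin{equation*}
\sum_{\alpha \in D} v_1 \otimes \cdots \otimes v_{\alpha-1} \otimes w_\alpha \otimes v_{\alpha+1} \otimes \cdots \otimes v_d,
\end{equation*}
while the remaining terms collect into a sum over $|S| \geq 2$ whose $\|\cdot\|$-norm is bounded, via continuity, by $\|\bigotimes\|$ times a polynomial in $\|v_\alpha\|_\alpha$ and $\|w_\alpha\|_\alpha$ that carries at least two factors of $\|w\|_\times$, and is therefore $o(\|w\|_\times)$ as $\|w\|_\times \to 0$. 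This identifies the displayed formula as the Fr\'echet derivative.

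There is no genuine obstacle here; once Proposition \ref{multilinear} is available and the universal property has been recalled, the statement is essentially a corollary and the only work is the bookkeeping that extracts the linear-in-$t$ part of the multilinear expansion. I would therefore keep the argument short, citing Proposition \ref{multilinear} for the regularity assertion and carrying out the one-line expansion above to justify the explicit form of $D\bigotimes$.
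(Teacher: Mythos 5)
Your proposal is correct and follows the same route as the paper, which simply presents this proposition as a consequence of Proposition \ref{multilinear}; your additional multilinear expansion isolating the $|S|=1$ terms and bounding the $|S|\geq 2$ remainder by $o(\|w\|_{\times})$ is the standard verification of the displayed formula that the paper leaves implicit.
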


Now, we recall the definition of some topological tensor spaces and we
will give some examples.

\begin{definition}
We say that $\mathbf{V}_{D_{\left\Vert
\cdot\right\Vert}}$ is a \emph{Banach tensor space} if there exists an
algebraic tensor space $\mathbf{V}_D$ and a norm $\left\Vert \cdot\right\Vert $
on $\mathbf{V}_D$ such that $\mathbf{V}_{D_{\left\Vert
\cdot\right\Vert}}$ is the
completion of $\mathbf{V}_D$ with respect to the norm $\left\Vert
\cdot\right\Vert $, i.e.
\begin{equation*}
\mathbf{V}_{D_{\left\Vert
\cdot\right\Vert}}:=\left. _{\left\Vert \cdot
\right\Vert }\bigotimes_{\alpha \in D}V_{\alpha}\right. =\overline{\left.
_{a}\bigotimes\nolimits_{\alpha \in D}V_{\alpha}\right. }^{\left\Vert \cdot\right\Vert
}.
\end{equation*}
If $\mathbf{V}_{D_{\left\Vert
\cdot\right\Vert}}$ is a Hilbert space, we say
that $\mathbf{V}_{D_{\left\Vert
\cdot\right\Vert}}$ is a \emph{Hilbert tensor
space}.
\end{definition}

Next, we give some examples of Banach and Hilbert tensor spaces.

\begin{example}
\label{Bsp HNp}For $I_{\alpha}\subset \mathbb{R}$ $\left( \alpha \in D\right) $
and $1\leq p<\infty ,$ the Sobolev space $H^{N,p}(I_{\alpha})$ consists of all
univariate functions $f$ in $L^{p}(I_{\alpha})$ with bounded norm\footnote{%
It suffices to have in \eqref{(SobolevNormp a} the terms $n=0$ and $n=N.$
The derivatives are to be understood as weak derivatives.}%
\begin{subequations}
\label{(SobolevNormp}%
\begin{equation*}
\left\Vert f\right\Vert _{N,p;I_{\alpha}}:=\bigg(\sum_{n=0}^{N}\int_{I_{\alpha}}\left%
\vert \partial ^{n}f\right\vert ^{p}\mathrm{d}x\bigg)^{1/p},
\label{(SobolevNormp a}
\end{equation*}%
whereas the space $H^{N,p}(\mathbf{I})$ of $d$-variate functions on $\mathbf{%
I}=I_{1}\times I_{2}\times \ldots \times I_{d}\subset \mathbb{R}^{d}$ consists of all functions $f$ in $L^{p}(\mathbf{I})$
with bounded norm 
\begin{equation*}
\left\Vert f\right\Vert _{N,p}:=\Big(\sum_{0\leq \left\vert \mathbf{n}%
\right\vert \leq N}\int_{\mathbf{I}}\left\vert \partial ^{\mathbf{n}%
}f\right\vert ^{p}\mathrm{d}\mathbf{x}\Big)^{1/p}  \label{(SobolevNormp b}
\end{equation*}%
\end{subequations}%
with $\mathbf{n}\in \mathbb{N}_{0}^{d}$ being a multi-index of length $%
\left\vert \mathbf{n}\right\vert :=\sum_{\alpha \in D}n_{\alpha}$. For $p>1$ it is
well known that $H^{N,p}(I_{\alpha})$ and $H^{N,p}(\mathbf{I})$ are reflexive and
separable Banach spaces. Moreover, for $p=2,$ the Sobolev spaces $%
H^{N}(I_{\alpha}):=H^{N,2}(I_{\alpha})$ and $H^{N}(\mathbf{I}):=H^{N,2}(\mathbf{I})$
are Hilbert spaces. As a first example,%
\begin{equation*}
H^{N,p}(\mathbf{I})=\left. _{\left\Vert \cdot \right\Vert
_{N,p}}\bigotimes_{\alpha \in D}H^{N,p}(I_{\alpha})\right.
\end{equation*}%
is a Banach tensor space. Examples of Hilbert tensor spaces are%
\begin{equation*}
L^{2}(\mathbf{I})=\left. _{\left\Vert \cdot \right\Vert
_{0,2}}\bigotimes_{\alpha \in D}L^{2}(I_{\alpha})\right. \text{\quad and\quad }H^{N}(%
\mathbf{I})=\left. _{\left\Vert \cdot \right\Vert
_{N,2}}\bigotimes_{\alpha \in D}H^{N}(I_{\alpha})\right. \text{ for }N\in \mathbb{N}.
\end{equation*}
\end{example}

The next result is a consequence of Corollary \ref{analytic}.

\begin{proposition}
\label{Analytic_otimes} Let $(V_{\alpha},\|\cdot\|_{\alpha})$ be complex Banach spaces for $\alpha \in D.$ Assume that $\|\cdot\|$ is a norm on the complex tensor space $\mathbf{V}_D = \left.
_{a}\bigotimes_{\alpha \in D}V_{\alpha}\right.$ such that the tensor
product map (\ref{bigotimes})
is continuous. Let $\mathfrak{i}:\mathbf{V}_D \rightarrow \left. _{\|\cdot\|}\bigotimes_{\alpha \in D}V_{\alpha}\right.$ be the
standard inclusion map, i.e. $\mathfrak{i}(\mathbf{v})=\mathbf{v}.$
Then 
\begin{equation*}
\left(\mathfrak{i} \circ \bigotimes\right) :\mathop{\mathchoice{\raise-0.22em\hbox{\huge $\times$}}
{\raise-0.05em\hbox{\Large $\times$}}{\hbox{\large $\times$}}{\times}}%
_{\alpha \in D}\left( V_{\alpha},\left\Vert \cdot\right\Vert _{\alpha}\right)
\longrightarrow%
\bigg(
\left. _{\|\cdot\|}\bigotimes_{\alpha \in D}V_{\alpha}\right. ,\left\Vert \cdot\right\Vert
\bigg), 
\end{equation*}
is an analytic map between complex Banach spaces.
\end{proposition}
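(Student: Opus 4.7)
The plan is to recognise that the composition $\mathfrak{i} \circ \bigotimes$ is a continuous multilinear map between complex Banach spaces and then to invoke the corollary stated immediately after Proposition \ref{analytic}, which guarantees analyticity for such maps. The one subtlety is that the codomain of $\bigotimes$ alone, namely $(\mathbf{V}_D, \|\cdot\|)$, is only a normed space in general, not a Banach space; composing with the isometric inclusion $\mathfrak{i}$ into the completion is precisely what moves us into the Banach setting where the corollary applies.

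First I would verify that the product space $\mathop{\times}_{\alpha \in D}(V_\alpha, \|\cdot\|_\alpha)$, endowed with the maximum norm $\|\cdot\|_\times$, is itself a complex Banach space (a finite product of complex Banach spaces is a complex Banach space in the product topology). Next, I would observe that $\bigotimes$ is multilinear by the universal property of the algebraic tensor product, and that $\mathfrak{i}$ is linear by construction, so $\mathfrak{i} \circ \bigotimes$ is multilinear from $\mathop{\times}_{\alpha \in D} V_\alpha$ into $\mathbf{V}_{D_{\|\cdot\|}}$.

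Then I would check continuity. By the hypothesis, $\bigotimes$ is continuous from $(\mathop{\times}_{\alpha \in D} V_\alpha, \|\cdot\|_\times)$ to $(\mathbf{V}_D, \|\cdot\|)$, i.e.\ $\|\bigotimes\| < \infty$. The inclusion $\mathfrak{i}$ is an isometry into the completion $\mathbf{V}_{D_{\|\cdot\|}}$, hence bounded with norm one. Therefore the composition satisfies
\begin{equation*}
\|(\mathfrak{i} \circ \bigotimes)(v_1,\ldots,v_d)\| \leq \|\bigotimes\| \cdot \|v_1\|_1 \cdots \|v_d\|_d,
\end{equation*}
so $\mathfrak{i} \circ \bigotimes$ is a continuous multilinear map between complex Banach spaces.

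Finally, I would apply the corollary to Proposition \ref{analytic} (continuous multilinear maps between complex Banach spaces are analytic) to conclude that $\mathfrak{i} \circ \bigotimes$ is analytic. No step is expected to be a real obstacle; the only conceptual point worth highlighting is that the completion is essential, since Proposition \ref{analytic} is stated for Banach (not merely normed) targets, and the hypothesis that the scalar field is $\mathbb{C}$ cannot be relaxed — over $\mathbb{R}$, Proposition \ref{multilinear} only yields $\mathcal{C}^\infty$-Fréchet differentiability.
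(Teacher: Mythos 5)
Your proposal is correct and follows exactly the route the paper intends: the paper gives no separate proof, simply noting that the proposition is a consequence of the corollary to Proposition \ref{analytic} (continuous multilinear maps between complex Banach spaces are analytic), and your argument fills in precisely the details that make that citation legitimate — multilinearity and continuity of $\mathfrak{i}\circ\bigotimes$ and the need to pass to the completion so the target is Banach. Nothing further is required.
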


For vector spaces $V_{\alpha}$ and $W_{\alpha}$ over $\mathbb{R},$ let linear mappings 
$A_{\alpha}:V_{\alpha}\rightarrow W_{\alpha}$ $\left( \alpha \in D\right) $ be given. Then
the definition of the elementary tensor%
\begin{equation*}
\mathbf{A}=\bigotimes_{\alpha \in D}A_{\alpha}:\;\mathbf{V}_D=\left. _{a}\bigotimes
_{\alpha \in D}V_{\alpha}\right. \longrightarrow\mathbf{W}_D=\left. _{a}\bigotimes
_{\alpha \in D}W_{\alpha}\right.
\end{equation*}
is given by%
\begin{equation}
\mathbf{A}\left( \bigotimes_{\alpha \in D}v_{\alpha}\right)
:=\bigotimes_{\alpha \in D}\left( A_{\alpha}v_{\alpha}\right) .
\label{(A als Tensorprodukt der Aj}
\end{equation}
Note that \eqref{(A als Tensorprodukt der Aj} uniquely defines the linear
mapping $\mathbf{A}:\mathbf{V}_D\rightarrow\mathbf{W}_D.$ We recall that $L(V,W)$
is the space of linear maps from $V$ into $W,$ while $V^{\prime}=L(V,\mathbb{%
R})$ is the algebraic dual of $V$. For metric spaces, $\mathcal{L}(V,W)$
denotes the continuous linear maps, while $V^{\ast}=\mathcal{L}(V,\mathbb{R}%
) $ is the topological dual of $V$. 

\begin{proposition}\label{linear_bigotimes}
Let $(V_{\alpha},\|\cdot\|_{\alpha})$ be normed spaces for $\alpha \in D$ and assume that $\|\cdot\|$ is a norm on the tensor space $\mathbf{V}_D = \left.
_{a}\bigotimes_{\alpha \in D}V_{\alpha}\right.$ such that the tensor product map (\ref{bigotimes})
is continuous. Let $U_{\alpha}$ be a finite-dimensional subspace of $V_{\alpha}$ for $\alpha \in D.$
Then 
\begin{align}\label{linear_big1}
\left. _{a}\bigotimes_{\alpha \in D}\mathcal{L}(U_{\alpha},V_{\alpha})\right. =\mathcal{L}\left( \left.
_{a}\bigotimes_{\alpha \in D}U_{\alpha}\right.
, \mathbf{V}_D \right)
\end{align}
and the tensor product map
\begin{align}\label{linear_big2}
\bigotimes:\mathop{\mathchoice{\raise-0.22em\hbox{\huge
 $\times$}} {\raise-0.05em\hbox{\Large $\times$}}{\hbox{\large
 $\times$}}{\times}}_{\alpha \in D} \mathcal{L}(U_{\alpha},V_{\alpha}) \rightarrow \mathcal{L}\left( \left.
_{a}\bigotimes_{\alpha \in D}U_{\alpha}\right.
, \mathbf{V}_D \right), 
 \quad (A_{\alpha})_{\alpha \in D} \mapsto \mathbf{A}:= \bigotimes_{\alpha \in D} A_{\alpha},
\end{align}
is continuous and hence $\mathcal{C} ^{\infty}$-Fr\'echet differentiable.
\end{proposition}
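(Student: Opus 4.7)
The plan is to establish the algebraic equality (\ref{linear_big1}) first, then derive the continuity of the multilinear map (\ref{linear_big2}) from the continuity of the tensor product map on the $V_{\alpha}$'s (which is assumed), and finally invoke Proposition \ref{multilinear} to upgrade continuity to $\mathcal{C}^{\infty}$-Fr\'echet differentiability.

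For the algebraic identity, my first observation would be that, because each $U_{\alpha}$ is finite-dimensional, every linear map out of $U_{\alpha}$ is continuous and every linear map out of the finite-dimensional space $\left._{a}\bigotimes_{\alpha \in D} U_{\alpha}\right.$ into $\mathbf{V}_D$ is continuous; hence $\mathcal{L}(U_{\alpha},V_{\alpha}) = L(U_{\alpha},V_{\alpha})$ and $\mathcal{L}\!\left(\left._{a}\bigotimes_{\alpha \in D} U_{\alpha}\right.,\mathbf{V}_D\right) = L\!\left(\left._{a}\bigotimes_{\alpha \in D} U_{\alpha}\right.,\mathbf{V}_D\right)$. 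The forward inclusion $\bigotimes_{\alpha} L(U_{\alpha},V_{\alpha}) \hookrightarrow L\!\left(\left._{a}\bigotimes_{\alpha \in D} U_{\alpha}\right.,\mathbf{V}_D\right)$ is the canonical injection provided by the universal property applied to formula (\ref{(A als Tensorprodukt der Aj}). For the reverse inclusion I would fix a basis $\{e_{\alpha,i}\}_{i=1}^{n_{\alpha}}$ of each $U_{\alpha}$ with associated dual basis $\{e_{\alpha,i}^{\ast}\}$; given any $T$ in the right-hand side, each image $T(e_{1,i_1}\otimes \cdots \otimes e_{d,i_d})$ is itself a finite sum of elementary tensors in $\mathbf{V}_D$, and recombining the pieces yields an explicit representation of $T$ as a finite sum of elementary tensors of rank-one operators $u_{\alpha} \mapsto e_{\alpha,i_{\alpha}}^{\ast}(u_{\alpha})\,v$, each of which lies in $\mathcal{L}(U_{\alpha},V_{\alpha})$.

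For continuity of (\ref{linear_big2}) I would perform a direct norm estimate. Writing $\mathbf{u}\in \left._{a}\bigotimes_{\alpha \in D} U_{\alpha}\right.$ in the tensor-product basis as $\mathbf{u}=\sum c_{i_1,\ldots,i_d}\,e_{1,i_1}\otimes \cdots \otimes e_{d,i_d}$, the action of $\mathbf{A}=\bigotimes_{\alpha \in D} A_{\alpha}$ expands as a finite sum whose norm I would bound termwise by the assumed continuity of $\bigotimes$ on the $V_{\alpha}$'s, obtaining $\|A_1 e_{1,i_1}\otimes \cdots \otimes A_d e_{d,i_d}\| \le \|\bigotimes\|\prod_{\alpha}\|A_{\alpha}\|_{V_{\alpha}\leftarrow U_{\alpha}}\|e_{\alpha,i_{\alpha}}\|_{\alpha}$. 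Since $\left._{a}\bigotimes_{\alpha \in D} U_{\alpha}\right.$ is finite-dimensional, all norms on it are equivalent, so the $\ell^1$ norm of the coefficient vector $(c_{i_1,\ldots,i_d})$ is controlled by the restricted norm $\|\mathbf{u}\|$. This produces a bound of the shape $\|\mathbf{A}\|_{\mathbf{V}_D \leftarrow \bigotimes U_{\alpha}} \le C \prod_{\alpha}\|A_{\alpha}\|_{V_{\alpha}\leftarrow U_{\alpha}}$ where $C$ depends only on the chosen bases and on $\|\bigotimes\|$, thereby giving continuity of the multilinear map. Proposition \ref{multilinear} then immediately delivers the $\mathcal{C}^{\infty}$-Fr\'echet differentiability.

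The main obstacle is, in my view, conceptual rather than technical: recognising that the finite-dimensionality of the $U_{\alpha}$ is exactly what makes both halves of the proof go through. It is what allows the algebraic identification in (\ref{linear_big1}) without any completion on the left-hand side, and it is what makes the norm-equivalence on $\left._{a}\bigotimes_{\alpha \in D} U_{\alpha}\right.$ available, converting the pointwise continuity of $\bigotimes$ on the factor spaces into a uniform operator-norm bound without imposing any further cross-norm hypothesis on $\|\cdot\|$.
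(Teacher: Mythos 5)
Your proposal is correct and follows essentially the same route as the paper: the paper also reduces \eqref{linear_big1} to the finite-dimensionality of the $U_{\alpha}$ (citing Proposition 3.49 of \cite{Hackbusch} where you reprove the identification by hand with bases), and it establishes continuity of \eqref{linear_big2} by exactly your estimate, namely bounding $\|\mathbf{A}(\otimes_{\alpha}u_{\alpha})\|$ via the continuity of \eqref{bigotimes} and then absorbing the passage to general $\mathbf{u}$ into a constant depending on $\dim U_{\alpha}$, before invoking Proposition \ref{multilinear}. Your explicit appeal to norm equivalence on the finite-dimensional space $\left._{a}\bigotimes_{\alpha\in D}U_{\alpha}\right.$ merely spells out the step the paper leaves implicit in its constant $C'$.
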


\begin{proof}
Recall that $L(U,X) = \mathcal{L}(U,X)$ holds  
for every finite-dimensional subspace $U$ of a normed space $X.$  
Then (\ref{linear_big1}) follows from Proposition 3.49 of \cite{Hackbusch}. To prove
the second statement we need to show that the tensor product map (\ref{linear_big2}) 
is bounded, that is,
\begin{align}\label{bounded}
\|\bigotimes\| = \sup \left\{
\|\bigotimes_{\alpha \in D} A_{\alpha}\|_{\mathbf{V}_D \leftarrow  \left.
_{a}\bigotimes_{\alpha \in D}U_{\alpha}\right. }: \|A_\alpha \|_{V_\alpha \leftarrow U_\alpha} \le 1 \text{ for } \, 1 \le \alpha \le d
\right\} < \infty.
\end{align}
For $\mathbf{A} = \bigotimes_{\alpha \in D} A_{\alpha}$, 
$$
\|\mathbf{A}(\otimes_{\alpha \in D} u_\alpha)\| = \|\otimes_{\alpha \in D} A_{\alpha}(u_{\alpha})\| \le C \prod_{\alpha \in D} \|A_{\alpha}(u_{\alpha})\|_{\alpha} \le
C \prod_{\alpha \in D} \|A_{\alpha}\|_{V_{\alpha} \leftarrow U_{\alpha}} \|u_{\alpha}\|_{\alpha}
$$
holds by the continuity of the tensor product map (\ref{bigotimes}). Therefore,
$$
\|\mathbf{A}\|_{\mathbf{V}_D \leftarrow  \left.
_{a}\bigotimes_{\alpha \in D}U_{\alpha}\right. } = \sup \left\{
	\|\mathbf{A}(\mathbf{u})\| : \mathbf{u}\in \bigotimes_{\alpha \in D}U_{\alpha}, \|\mathbf{u}\| \le 1 
\right\} \le C' \prod_{\alpha \in D} \|A_{\alpha}\|_{V_{\alpha} \leftarrow U_{\alpha}},
$$
for some constant $C'$ depending on the dimension of the spaces $U_{\alpha}$, $\alpha \in D,$ and (\ref{bounded}) follows. From Proposition \ref{multilinear} the second statement holds.
\end{proof}

\subsection{The set of tensors in Tucker format with fixed rank}

Before introducing the manifold of tensors in Tucker format with fixed rank 
in a Banach space framework, we need to define the minimal subspace 
of a tensor in an algebraic tensor space. The following statement summarises 
the results given in Section 2.2 in \cite{FALHACK}.

\begin{proposition}\label{minimal}
Given a finite index set $D=\{1,2,\ldots,d\}$, let $V_{\alpha}$ be a vector space
for each $\alpha \in D$ and let $\mathbf{v}\in \left._a \bigotimes_{\alpha\in D} V_{\alpha} \right..$ 
Then for each $\alpha \in D$ there exists a unique subspace $U_{\alpha}^{\min}(\mathbf{v})$ with  $\dim 
U_{\alpha}^{\min}(\mathbf{v}) = r_{\alpha}$ for some $r_{\alpha} < \infty,$ and  such that the following statements hold.
\begin{enumerate}
\item[(a)] If $\mathbf{v}\in \left._a \bigotimes_{\alpha \in D} U_{\alpha} \right.$ then
$U_{\alpha}^{\min}(\mathbf{v})\subset U_{\alpha}$ ($\alpha \in D$), while $\mathbf{v}\in \left._a \bigotimes_{\alpha\in D} U_{\alpha}^{\min}(\mathbf{v}) \right..$
\item[(b)]  For each $\alpha \in D$ there exists a unique subspace $U_{D\setminus \{\alpha\}}^{\min}(\mathbf{v}) \subset
\mathbf{V}_{[\alpha]}$ such that $\mathbf{v} \in U_{\alpha}^{\min}(\mathbf{v}) \otimes_{a}
U_{D\setminus \{\alpha\}}^{\min}(\mathbf{v})$ and $\dim U_{D\setminus\{\alpha\}}^{\min}(\mathbf{v})=r_{\alpha}.$
\end{enumerate}
\end{proposition}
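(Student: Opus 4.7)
The plan is to fix $\alpha \in D$, use the isomorphism $\Phi_\alpha$ to identify $\mathbf{v}$ with its image in $V_\alpha \otimes_a \mathbf{V}_{[\alpha]}$, and reduce to the bipartite situation. Denoting by $\mathbf{V}_{[\alpha]}'$ the algebraic dual, I would define
\begin{equation*}
U_\alpha^{\min}(\mathbf{v}) := \mathrm{span}\,\{(id_{V_\alpha} \otimes \varphi)(\mathbf{v}) : \varphi \in \mathbf{V}_{[\alpha]}'\},
\end{equation*}
where $id_{V_\alpha}\otimes \varphi$ denotes the partial contraction $v\otimes \mathbf{w}\mapsto \varphi(\mathbf{w})\,v$. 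Finite-dimensionality is immediate: from any finite representation $\mathbf{v} = \sum_{i=1}^{n}v_i \otimes w_i$, each contraction equals $\sum_i \varphi(w_i)\,v_i$, so $r_\alpha := \dim U_\alpha^{\min}(\mathbf{v}) \leq n < \infty$.

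The second step is to extract a \emph{minimal} representation $\mathbf{v} = \sum_{i=1}^{r_\alpha} e_i \otimes w_i^\ast$ in which both $\{e_i\}$ and $\{w_i^\ast\}$ are linearly independent. Starting from any finite representation, successively combining collinear right factors produces one with linearly independent $\{w_i\}$; separating functionals $\varphi_j \in \mathbf{V}_{[\alpha]}'$ satisfying $\varphi_j(w_i)=\delta_{ij}$ then yield $v_j = (id_{V_\alpha}\otimes\varphi_j)(\mathbf{v})\in U_\alpha^{\min}(\mathbf{v})$, so $\mathrm{span}\{v_i\} \subset U_\alpha^{\min}(\mathbf{v})$; the reverse inclusion is automatic, the two coincide, and the $v_i$'s can be taken as a basis of $U_\alpha^{\min}(\mathbf{v})$ of cardinality $r_\alpha$.

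For (a), if $\mathbf{v} \in {}_a\bigotimes_{\beta \in D} U_\beta$ with $U_\beta \subset V_\beta$, some representation has $\alpha$-th factor in $U_\alpha$, so every contraction lies in $U_\alpha$ and hence $U_\alpha^{\min}(\mathbf{v}) \subset U_\alpha$; this also yields uniqueness of $U_\alpha^{\min}(\mathbf{v})$ as the smallest such subspace. To obtain $\mathbf{v} \in {}_a\bigotimes_{\alpha\in D} U_\alpha^{\min}(\mathbf{v})$ I would pick algebraic projections $P_\alpha : V_\alpha \to V_\alpha$ with image $U_\alpha^{\min}(\mathbf{v})$ (available because the target is finite-dimensional); since $\mathbf{v} \in U_\alpha^{\min}(\mathbf{v}) \otimes_a \mathbf{V}_{[\alpha]}$ we have $(P_\alpha \otimes id)(\mathbf{v}) = \mathbf{v}$ for each $\alpha$, and applying all of them yields $\mathbf{v} = \bigl(\bigotimes_{\alpha\in D} P_\alpha\bigr)(\mathbf{v}) \in {}_a\bigotimes_{\alpha \in D} U_\alpha^{\min}(\mathbf{v})$. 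For (b), I would set $U_{D\setminus\{\alpha\}}^{\min}(\mathbf{v}) := \mathrm{span}\,\{w_1^\ast,\ldots,w_{r_\alpha}^\ast\}$, and argue its uniqueness and minimality symmetrically, by contracting against functionals in the algebraic dual $V_\alpha'$.

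The main obstacle I anticipate is the symmetric dimensional equality $\dim U_{D\setminus\{\alpha\}}^{\min}(\mathbf{v}) = \dim U_\alpha^{\min}(\mathbf{v}) = r_\alpha$ in (b) --- the infinite-dimensional analogue of ``row rank equals column rank''. It drops out once the minimal representation has both sides linearly independent, but the verification requires care because $\mathbf{V}_{[\alpha]}$ is not assumed finite-dimensional, so the argument must proceed entirely via separating functionals in the algebraic dual rather than by matrix manipulations.
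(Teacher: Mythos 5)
Your argument is correct and is essentially the standard one: the paper gives no proof of this proposition, deferring to Section 2.2 of \cite{FALHACK}, where $U_{\alpha}^{\min}(\mathbf{v})$ is obtained in exactly this way, via partial contractions $(id_{V_\alpha}\otimes\varphi)(\mathbf{v})$ with $\varphi$ in the algebraic dual of $\mathbf{V}_{[\alpha]}$ and a representation of minimal length in $V_\alpha\otimes_a\mathbf{V}_{[\alpha]}$ whose left and right factors are simultaneously linearly independent. The only point to tighten is the one you already flag: after making the $w_i$ independent you must still reduce until the $v_i$ are independent as well (substituting any dependence relation preserves independence of the right factors and strictly decreases the number of terms), which is what delivers $\dim U_{D\setminus\{\alpha\}}^{\min}(\mathbf{v})=r_\alpha$ in (b).
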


\bigskip

For a tensor $\mathbf{v}\in \mathbf{V}_D = \left._a \bigotimes_{\alpha \in D} V_{\alpha} \right.$ the linear subspaces $U_{\alpha}^{\min}(\mathbf{v})$ ($\alpha \in D$) 
are called \emph{minimal subspaces} and
$r_{\alpha}= \dim U_{\alpha}^{\min}(\mathbf{v})$ is called the $\alpha$-rank of $\mathbf{v}.$

\bigskip

Let $\mathbb{Z}_+$ be the set of non-negative integers.
We will say that $\mathfrak{r}=(r_1,\ldots,r_d) \in \mathbb{Z}_+^d$ is an \emph{admissible rank} 
for $\mathbf{V}_D:= \left._a \bigotimes_{\alpha \in D} V_{\alpha} \right.$
if and only if there exists $\mathbf{v} \in \mathbf{V}_D$ 
such that $r_{\alpha}=\dim U_{\alpha}^{\min}(\mathbf{v})$
for $\alpha \in D.$ We will denote the set of all admissible ranks of a tensor space $\mathbf{V}_D$ by 
$\mathcal{AD}(\mathbf{V}_D),$ and hence
$$
\mathcal{AD}(\mathbf{V}_D) = \left\{(\dim U_{\alpha}^{\min}(\mathbf{v}))_{\alpha \in D}\in \mathbb{Z}_+^d: \mathbf{v} \in \mathbf{V}_D\right\}.
$$
It is not difficult to see that $\mathbf{0}=(0,\ldots,0) \in \mathcal{AD}(\mathbf{V}_D)$ and 
$\mathbf{1}=(1,\ldots,1) \in \mathcal{AD}(\mathbf{V}_D)$
if and only if $\dim V_{\alpha} \ge 1$ for all $\alpha \in D.$

\bigskip

Now, we define in an algebraic tensor space 
$\mathbf{V}_D = \left._a \bigotimes_{\alpha \in D} V_{\alpha} \right.$ \emph{the set of 
tensors in Tucker format with fixed rank 
$\mathfrak{r}=(r_1,\ldots,r_d) \in \mathcal{AD}(\mathbf{V}_D)$} 
by 
$$
\mathfrak{M}_{\mathfrak{r}}(\mathbf{V}_D):=
\left\{\mathbf{v} \in \mathbf{V}_D: \dim U_{\alpha}^{\min}(\mathbf{v}) = r_{\alpha}, \, \alpha \in D \right\}.
$$
Then
$$
\mathbf{V}_D = \bigcup_{\mathfrak{r} \in \mathcal{AD}(\mathbf{V}_D)}\mathfrak{M}_{\mathfrak{r}}(\mathbf{V}_D).
$$

Before introducing the representation of a tensor with a fixed rank $\mathfrak{r}$ 
we need to define the set of coefficients of that tensors. To this end, we recall
the definition of the `matricisation' (or `unfolding') of a tensor in a
finite-dimensional setting.

\begin{definition}
\label{Def Malpha}For a finite index set $D=\{1,2,\ldots,d\},$ $d \ge 2,$ and each $\mu \in D$ the
map $\mathcal{M}_{\mu}$ is defined as the isomorphism%
\begin{equation*}
\begin{tabular}{llll}
$\mathcal{M}_{\mu}:$ & $\mathbb{R}^{%
\mathop{\mathchoice{\raise-0.22em\hbox{\huge $\times$}} {\raise-0.05em\hbox{\Large $\times$}}{\hbox{\large
$\times$}}{\times}}_{\beta \in D}r_{\beta}} $ & $\rightarrow $ & $\mathbb{R}%
^{r_{\mu} \times \left( \prod_{\delta \in
D \setminus \{\mu\}}r_{\delta}\right) },$ \\ 
& $C_{(i_{\beta})_{\beta \in D}}$ & $\mapsto $ & $C_{i_{\mu},(i_{\delta})_{\delta \in D \setminus \{\mu\}}}$%
\end{tabular}%
\ \ \ \ \ 
\end{equation*}
\end{definition}

It allows us to introduce the following definition.

\begin{definition}
For a finite index set 
$D=	\{1,2,\ldots,d\},$ $d \ge 2,$ let $C^{(D)}\in \mathbb{R}^{%
\mathop{\mathchoice{\raise-0.22em\hbox{\huge $\times$}}
{\raise-0.05em\hbox{\Large $\times$}}{\hbox{\large
$\times$}}{\times}}_{\mu \in D}r_{\mu}}.$  
We say that $C^{(D)}\in 
\mathbb{R}_*^{%
\mathop{\mathchoice{\raise-0.22em\hbox{\huge $\times$}}
{\raise-0.05em\hbox{\Large $\times$}}{\hbox{\large
$\times$}}{\times}}_{\mu \in D}r_{\mu}}$ if and only if $\mathrm{rank}\, \mathcal{M}_{\mu}(C^{(D)}) = r_{\mu},$ where $\mathcal{M}_{\mu}(C^{(D)}) \in \mathbb{R}^{r_{\mu} \times \left(
\prod_{\beta \in D \setminus \{\mu\}}r_{\beta}\right) },$ for each $%
\mu \in D.$ 
\end{definition}

\begin{remark}\label{open_set}
We have that 
$C^{(D)} \in \mathbb{R}_*^{%
\mathop{\mathchoice{\raise-0.22em\hbox{\huge $\times$}}
{\raise-0.05em\hbox{\Large $\times$}}{\hbox{\large
$\times$}}{\times}}_{\mu \in D}r_{\mu}}$ if and only if $%
\mathcal{M}_{\mu}(C^{(D)})\mathcal{M}_{\mu}(C^{(D)})^T \in \mathrm{GL}(\mathbb{R}^{r_{\mu}})$ for $\mu \in D.$  Since the determinant is a continuous function, $\mathbb{R}_*^{%
\mathop{\mathchoice{\raise-0.22em\hbox{\huge $\times$}}
{\raise-0.05em\hbox{\Large $\times$}}{\hbox{\large
$\times$}}{\times}}_{\mu \in D}r_{\mu }}$ is an open set in $\mathbb{R}%
^{%
\mathop{\mathchoice{\raise-0.22em\hbox{\huge $\times$}}
{\raise-0.05em\hbox{\Large $\times$}}{\hbox{\large
$\times$}}{\times}}_{\mu \in D}r_{\mu }}$ and hence a finite-dimensional manifold. 
We point out that if $r_{\mu}=1$ for all $\mu \in D$ then
$\mathbb{R}^{%
\mathop{\mathchoice{\raise-0.22em\hbox{\huge $\times$}}
{\raise-0.05em\hbox{\Large $\times$}}{\hbox{\large
$\times$}}{\times}}_{\mu \in D}r_{\mu}}_* = \mathbb{R}_* = \mathbb{R} \setminus \{0\},$
which coincides with the Lie group $\mathrm{GL}(\mathbb{R}).$
\end{remark}

In the next lemma we give a characterisation of the representation 
of tensors in $\mathfrak{M}_{\mathfrak{r}}(\mathbf{V}_D).$

\begin{lemma}\label{characterization_FT}
Let $\mathbf{V}_D = \left._a \bigotimes_{\alpha \in D} V_{\alpha} \right.$ be an algebraic tensor space. Then
the following statements are equivalent.
\begin{enumerate}
\item[(a)] $\mathbf{v} \in \mathfrak{M}_{\mathfrak{r}}(\mathbf{V}_D).$
\item[(b)] For each $\alpha \in D$ there exists a set
$\mathcal{B}_{\alpha}=\{u_{i_{\alpha}}^{(\alpha)}: 1 \le i_{\alpha} \le r_{\alpha}\}$ of
linearly independent vectors and a unique $C^{(D)} \in \mathbb{R}_*^{%
\mathop{\mathchoice{\raise-0.22em\hbox{\huge $\times$}}
{\raise-0.05em\hbox{\Large $\times$}}{\hbox{\large
$\times$}}{\times}}_{\alpha \in D}r_{\alpha}},$ once $\mathcal{B}_{\alpha}$ is fixed $(\alpha \in D)$, such that
\begin{align}\label{Tucker_representation}
\mathbf{v} = \sum_{\substack{1 \le i_{\alpha} \le r_{\alpha} \\ \alpha \in D}} C^{(D)}_{(i_{\alpha})_{\alpha \in D}} \bigotimes_{\alpha \in D} u^{(\alpha)}_{i_{\alpha}}.
\end{align} 
\item[(c)] For each $\alpha \in D$ there exist 
 linearly independent vectors $\{u_{i_{\alpha}}^{(\alpha)}: 1 \le i_{\alpha} \le r_{\alpha}\}$
in $V_{\alpha}$ and  
 
linearly independent vectors $\{\mathbf{U}_{i_{\alpha }}^{(\alpha )}: 1 \le  i_{\alpha} \le r_{\alpha}\}$ in $\mathbf{V}_{[\alpha]} = 
\left._a \bigotimes_{\beta \in D \setminus \{\alpha\}}V_{\beta} \right.$
such that
\begin{equation}\label{F1}
\mathbf{v} = \sum_{1 \le i_{\alpha} \le r_{\alpha}} 
u_{i_{\alpha}}^{(\alpha)} \otimes 
\mathbf{U}_{i_{\alpha}}^{(\alpha)}.
\end{equation}
Furthermore, if (\ref{Tucker_representation}) holds, then
\begin{equation}\label{F2}
\mathbf{U}_{i_{\alpha }}^{(\alpha )}=\sum_{\substack{ 1\leq i_{\beta }\leq
r_{\beta }  \\ \beta \in D \setminus \{\alpha\}}}C_{i_{\alpha},(i_{\beta })_{\beta
\in D\setminus \{\alpha\}}}^{(D)}\bigotimes_{\beta \in D}u_{i_{\beta }}^{(\beta
)}
\end{equation}
for $1 \le i_{\alpha} \le r_{\alpha}$ and $\alpha \in D.$
\end{enumerate}
\end{lemma}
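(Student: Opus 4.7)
The natural strategy is to prove the cyclic implications $(a) \Rightarrow (b) \Rightarrow (c) \Rightarrow (a)$, with Proposition~\ref{minimal} as the main tool and the matricisations $\mathcal{M}_\mu$ serving to translate between the ``core'' description in (b) and the ``slice'' description in (c).

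For $(a) \Rightarrow (b)$, I would invoke Proposition~\ref{minimal}(a): since $\dim U_\alpha^{\min}(\mathbf{v}) = r_\alpha$ for each $\alpha\in D$, one may pick an arbitrary basis $\mathcal{B}_\alpha = \{u_{i_\alpha}^{(\alpha)}\}_{1\le i_\alpha \le r_\alpha}$ of $U_\alpha^{\min}(\mathbf{v})$, and since $\mathbf{v}\in \left._a\bigotimes_{\alpha\in D} U_\alpha^{\min}(\mathbf{v})\right.$, expand $\mathbf{v}$ against the basis $\{\bigotimes_{\alpha} u_{i_\alpha}^{(\alpha)}\}$. Uniqueness of the coefficients $C^{(D)}$ once the $\mathcal{B}_\alpha$ are fixed is a standard property of tensor product bases. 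The non-trivial point is that $C^{(D)}\in \mathbb{R}_*^{\times_{\alpha\in D} r_\alpha}$, i.e.~each $\mathcal{M}_\mu(C^{(D)})$ has full row rank $r_\mu$. I would argue by contradiction: if some row of $\mathcal{M}_\mu(C^{(D)})$ were a linear combination of the others, a change of basis in the $\mu$-th factor would express $\mathbf{v}$ using only $r_\mu - 1$ vectors of $V_\mu$, forcing $\dim U_\mu^{\min}(\mathbf{v}) \le r_\mu - 1$, in contradiction with the hypothesis.

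For $(b) \Rightarrow (c)$, I would simply \emph{define} $\mathbf{U}_{i_\alpha}^{(\alpha)}$ by formula (\ref{F2}); grouping the sum in (\ref{Tucker_representation}) by the index $i_\alpha$ immediately yields (\ref{F1}). Linear independence of the vectors $\{u_{i_\alpha}^{(\alpha)}\}_{i_\alpha}$ is inherited from $\mathcal{B}_\alpha$. For the linear independence of $\{\mathbf{U}_{i_\alpha}^{(\alpha)}\}_{i_\alpha}$ in $\mathbf{V}_{[\alpha]}$, observe that the coefficients of these vectors in the basis $\{\bigotimes_{\beta\ne \alpha} u_{i_\beta}^{(\beta)}\}$ of $\left._a\bigotimes_{\beta\ne\alpha}\mathrm{span}(\mathcal{B}_\beta)\right.$ are precisely the rows of the matrix $\mathcal{M}_\alpha(C^{(D)})$, which has rank $r_\alpha$ by hypothesis; linear independence of the basis $\{\bigotimes_{\beta\ne\alpha} u_{i_\beta}^{(\beta)}\}$ then transfers to the $\mathbf{U}_{i_\alpha}^{(\alpha)}$.

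For $(c) \Rightarrow (a)$, the task is to prove $\dim U_\alpha^{\min}(\mathbf{v}) = r_\alpha$ for each $\alpha$. Given (\ref{F1}) with both families linearly independent, I would invoke Proposition~\ref{minimal}(a) applied to $U_\alpha := \mathrm{span}\{u_{i_\alpha}^{(\alpha)}\}_{i_\alpha}$ and the analogous subspace of $\mathbf{V}_{[\alpha]}$ to get $U_\alpha^{\min}(\mathbf{v})\subset U_\alpha$, hence $\dim U_\alpha^{\min}(\mathbf{v}) \le r_\alpha$. For the reverse inequality I would use Proposition~\ref{minimal}(b): $\mathbf{v}\in U_\alpha^{\min}(\mathbf{v}) \otimes_a U_{D\setminus\{\alpha\}}^{\min}(\mathbf{v})$ with $\dim U_{D\setminus\{\alpha\}}^{\min}(\mathbf{v}) = \dim U_\alpha^{\min}(\mathbf{v})$, so any representation of $\mathbf{v}$ using fewer than $\dim U_\alpha^{\min}(\mathbf{v})$ tensors on one side would be impossible. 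Since (\ref{F1}) gives a representation of $\mathbf{v}$ as a sum of $r_\alpha$ elementary tensors with linearly independent factors on each side, a standard minimality argument (the swap lemma underlying the uniqueness of minimal representations) forces $r_\alpha \le \dim U_\alpha^{\min}(\mathbf{v})$.

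The main obstacle is the equivalence between the algebraic full-rank condition on the core tensor in (b) and the minimal-dimension condition in (a): both implications ultimately rest on the fact that changing a basis of a factor corresponds to a row operation on the appropriate matricisation, and that minimality of the subspace $U_\alpha^{\min}(\mathbf{v})$ is equivalent to the corresponding matricisation of the coefficient array having full rank. Making this correspondence explicit is the crux of the proof; once it is established, all three implications reduce to routine basis manipulations.
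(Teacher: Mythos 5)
Your proposal is correct, but it routes the equivalences differently from the paper. The paper uses (c) as the hub, proving (a)$\Leftrightarrow$(c) and then (b)$\Leftrightarrow$(c): the implication (a)$\Rightarrow$(c) is read off directly from Proposition~\ref{minimal}(b) (since $\mathbf{v}\in U_{\alpha}^{\min}(\mathbf{v})\otimes_a U_{D\setminus\{\alpha\}}^{\min}(\mathbf{v})$ with both factors of dimension $r_{\alpha}$), and the full-rank condition $C^{(D)}\in\mathbb{R}_*^{\times_{\alpha}r_{\alpha}}$ in (c)$\Rightarrow$(b) is obtained by observing that the slices in \eqref{F2} must coincide with the linearly independent family $\{\mathbf{U}_{i_{\alpha}}^{(\alpha)}\}$, which forces $\mathrm{rank}\,\mathcal{M}_{\alpha}(C^{(D)})=r_{\alpha}$. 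You instead run the cycle (a)$\Rightarrow$(b)$\Rightarrow$(c)$\Rightarrow$(a) and establish full rank of the core directly from minimality of $U_{\mu}^{\min}(\mathbf{v})$ by a change-of-basis contradiction (a dependent row of $\mathcal{M}_{\mu}(C^{(D)})$ would let you rewrite $\mathbf{v}$ with only $r_{\mu}-1$ vectors in the $\mu$-th factor); your (b)$\Rightarrow$(c) then makes explicit the correspondence between rows of $\mathcal{M}_{\alpha}(C^{(D)})$ and the slices $\mathbf{U}_{i_{\alpha}}^{(\alpha)}$, which the paper leaves implicit. Both arguments rest on the same two ingredients---Proposition~\ref{minimal} and the rank/slice-independence correspondence---so the difference is one of bookkeeping rather than substance; your version spells out the linear-algebra crux more explicitly, while the paper's hub-at-(c) organization makes the two directions involving (a) essentially immediate. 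Your (c)$\Rightarrow$(a) step is more elaborate than necessary: once both families in \eqref{F1} are linearly independent, $U_{\alpha}^{\min}(\mathbf{v})=\mathrm{span}\{u_{i_{\alpha}}^{(\alpha)}\}$ follows at once from the definition of the minimal subspace, without needing the swap-lemma minimality argument.
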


\begin{proof}
First, we prove that (a) and (c) are equivalent. If (a) holds, then from Proposition~\ref{minimal}(b) we know
that 
$$
\mathbf{v} \in U_{\alpha}^{\min}(\mathbf{v}) \otimes_a U_{D \setminus \{\alpha\}}^{\min}(\mathbf{v})
$$
where $\dim  U_{\alpha}^{\min}(\mathbf{v}) = \dim U_{D \setminus \{\alpha\}}^{\min}(\mathbf{v}) = r_{\alpha}$ for each $\alpha \in D.$ Then there exists 
 linearly independent vectors $\{u_{i_{\alpha}}^{(\alpha)}: 1 \le i_{\alpha} \le r_{\alpha}\}$ 
in $V_{\alpha}$ and  
linearly independent vectors $\{\mathbf{U}_{i_{\alpha }}^{(\alpha )}: 1 \le  i_{\alpha} \le r_{\alpha}\}$  in $\mathbf{V}_{[\alpha]} = 
\left._a \bigotimes_{\beta \in D \setminus \{\alpha\}}V_{\beta} \right.$
such that (\ref{F1}) holds and hence (c) is true. Conversely, if (c) holds then
clearly $\dim  U_{\alpha}^{\min}(\mathbf{v}) = r_{\alpha}$ for each $\alpha \in D,$
and hence (a) is also true. 

Now, we prove that (b) and (c) are equivalent. Clearly (b) implies (c). To prove
that (c) implies (b) assume that (c) holds. By the definition of minimal subspace
we have that $$U_{\alpha}^{\min}(\mathbf{v}) = \mathrm{span}\, \{u_{i_{\alpha}}^{(\alpha)}: 1 \le i_{\alpha} \le r_{\alpha}\}$$ for each $\alpha \in D.$ Since $\mathbf{v} \in \left._a \bigotimes_{\alpha \in D} 
U_{\alpha}^{\min}(\mathbf{v})\right.$ there exists $C^{(D)} \in \mathbb{R}^{%
\mathop{\mathchoice{\raise-0.22em\hbox{\huge $\times$}}
{\raise-0.05em\hbox{\Large $\times$}}{\hbox{\large
$\times$}}{\times}}_{\alpha \in D}r_{\alpha}}$ such that (\ref{Tucker_representation}) holds.
To conclude the proof  we only need to show that  $C^{(D)} \in\mathbb{R}_*^{%
\mathop{\mathchoice{\raise-0.22em\hbox{\huge $\times$}}
{\raise-0.05em\hbox{\Large $\times$}}{\hbox{\large
$\times$}}{\times}}_{\alpha \in D}r_{\alpha}}.$ To this end observe that (\ref{F2})
must hold for $1 \le i_{\alpha} \le r_{\alpha}$ and each $\alpha \in D,$
and hence $\mathrm{rank}\,\mathcal{M}_{\alpha}(C^{(D)}) =r_{\alpha}$ for
each $\alpha \in D.$ In consequence, $C^{(D)} \in\mathbb{R}_*^{%
\mathop{\mathchoice{\raise-0.22em\hbox{\huge $\times$}}
{\raise-0.05em\hbox{\Large $\times$}}{\hbox{\large
$\times$}}{\times}}_{\alpha \in D}r_{\alpha}}$ and (b) is true.
\end{proof}

\begin{remark}\label{identification}
From the proof of Lemma~\ref{characterization_FT} we have that 
$U_{\alpha}^{\min}(\mathbf{v}) = \mathrm{span}\, \{u_{i_{\alpha}}^{(\alpha)}: 1 \le i_{\alpha} \le r_{\alpha}\}$ and
$U_{D \setminus \{\alpha\}}^{\min}(\mathbf{v}) = \mathrm{span}\, \{\mathbf{U}_{i_{\alpha}}^{(\alpha)}: 1 \le i_{\alpha} \le r_{\alpha}\}$
for each $\alpha \in D.$ Furthermore, for 
$$
\mathbf{v} = \sum_{\substack{1 \le i_{\alpha} \le r_{\alpha} \\ \alpha \in D}} C^{(D)}_{(i_{\alpha})_{\alpha \in D}} \bigotimes_{\alpha \in D} u^{(\alpha)}_{i_{\alpha}} \in \mathfrak{M}_{\mathfrak{r}}(\mathbf{V}_D),
$$
there exists a natural diffeomorphism
$$
\mathfrak{M}_{\mathfrak{r}}\left( \left._a \bigotimes_{\alpha \in D} U_{\alpha}^{\min}(\mathbf{v}) \right. \right) \rightarrow \mathbb{R}_*^{%
\mathop{\mathchoice{\raise-0.22em\hbox{\huge $\times$}}
{\raise-0.05em\hbox{\Large $\times$}}{\hbox{\large
$\times$}}{\times}}_{\alpha \in D}r_{\alpha}}, \quad
\sum_{\substack{1 \le i_{\alpha} \le r_{\alpha} \\ \alpha \in D}} E^{(D)}_{(i_{\alpha})_{\alpha \in D}} \bigotimes_{\alpha \in D} u^{(\alpha)}_{i_{\alpha}}
\mapsto E^{(D)}.
$$
Thus we will identify each $u \in \mathfrak{M}_{\mathfrak{r}}\left( \left._a \bigotimes_{\alpha \in D} U_{\alpha}^{\min}(\mathbf{v}) \right. \right)$ with an element 
$E^{(D)} \in \mathbb{R}_*^{%
\mathop{\mathchoice{\raise-0.22em\hbox{\huge $\times$}}
{\raise-0.05em\hbox{\Large $\times$}}{\hbox{\large
$\times$}}{\times}}_{\alpha \in D}r_{\alpha}},$ once a basis
$\{u_{i_{\alpha}}^{(\alpha)}:1\leq i_{\alpha}\leq r_{\alpha}\}$ of $U_{\alpha}^{\min }(%
\mathbf{v})$ is fixed for each $\alpha\in D,$ by means of the equality
$$
\mathbf{u}=\mathbf{u}(E^{(D)}) = \sum_{\substack{1 \le i_{\alpha} \le r_{\alpha} \\ \alpha \in D}} E^{D}_{(i_{\alpha})_{\alpha \in D}} \bigotimes_{\alpha \in D} u^{(\alpha)}_{i_{\alpha}}.
$$
\end{remark}

\subsection{The manifold of tensors in Tucker format with fixed rank}\label{manifold_structure}

Assume that $(V_{\alpha},\|\cdot\|_{\alpha})$ is a normed space and  denote by 
$V_{\alpha_{\|\cdot\|_{\alpha}}}$ the Banach 
space obtained by the completion of $V_{\alpha}$ for each $\alpha \in D.$ 
Moreover, we also assume that $\|\cdot\|_D$ is a norm 
on the tensor space $\mathbf{V}_D = \left.
_{a}\bigotimes_{\alpha \in D}V_{\alpha}\right.$ such that the tensor
product map (\ref{bigotimes}) is continuous and hence, by Proposition \ref{Diff_otimes}, it is also 
$\mathcal{C}^{\infty}$-Fr\'echet differentiable.

\bigskip

Now, we proceed to provide a geometric structure for the set 
$\mathfrak{M}_{\mathfrak{r}}(\mathbf{V}_D).$
By Proposition~\ref{minimal} and Example~\ref{normed_grassmann} we have that
for each $\mathbf{v} \in \mathbf{V}_D$ the set $U_{\alpha}^{\min}(\mathbf{v}) 
\in \mathbb{G}_{r_{\alpha}}(V_{\alpha})$ for some $r_{\alpha} < \infty$ and $\alpha \in D.$
Since 
$
\mathbf{V}_D = \bigcup_{
\substack{\mathfrak{r} \in \mathcal{AD}(\mathbf{V}_D)}
} 
\mathfrak{M}_{\mathfrak{r}}(\mathbf{V}_D)$, 
thanks to Proposition~\ref{minimal} we can define a surjective map from a tensor space to an analytic Banach manifold:
$$
\varrho: \mathbf{V}_D \longrightarrow \bigcup_{(r_1,\ldots,r_d) \in \mathcal{AD}(\mathbf{V}_D)}\left(
\mathop{\mathchoice{\raise-0.22em\hbox{\huge $\times$}}
{\raise-0.05em\hbox{\Large $\times$}}{\hbox{\large
$\times$}}{\times}}_{\alpha \in D}\mathbb{G}_{_{r_{\alpha}}}(V_{\alpha})\right), \quad \mathbf{v} \mapsto 
(U_{\alpha}^{\min}(\mathbf{v}))_{\alpha \in D}.
$$
It allows us to consider for a fixed $\mathfrak{r} \in \mathcal{AD}(\mathbf{V}_D),$ 
$\mathfrak{r} \neq \mathbf{0},$ the restricted map
$$
\varrho_{\mathfrak{r}} = \varrho|_{\mathfrak{M}_{\mathfrak{r}}(\mathbf{V}_D)}: \mathfrak{M}_{\mathfrak{r}}(\mathbf{V}_D) = \varrho^{-1}\left( \mathop{\mathchoice{\raise-0.22em\hbox{\huge $\times$}}
{\raise-0.05em\hbox{\Large $\times$}}{\hbox{\large
$\times$}}{\times}}_{\alpha \in D}\mathbb{G}_{r_{\alpha}}(V_{\alpha})\right)
\longrightarrow \mathop{\mathchoice{\raise-0.22em\hbox{\huge $\times$}}
{\raise-0.05em\hbox{\Large $\times$}}{\hbox{\large
$\times$}}{\times}}_{\alpha \in D}\mathbb{G}_{r_{\alpha}}(V_{\alpha}), \quad \mathbf{v} \mapsto (U_{\alpha}^{\min}(\mathbf{v}))_{\alpha \in D},
$$
which is also surjective. For each $\alpha\in D$  the linear subspace $U_{\alpha}^{\min }(\mathbf{v}%
)\subset V_{\alpha}\subset V_{{\alpha}_{\Vert \cdot \Vert _{\alpha}}}$ belongs to the Banach
manifold $\mathbb{G}_{r_{\alpha}}(V_{\alpha})$ and hence there exists a closed subspace $W_{\alpha}^{\min }(%
\mathbf{v})$ such that $V_{{\alpha}_{\|\cdot\|_{\alpha}}} = U_{\alpha}^{\min }(%
\mathbf{v}) \oplus W_{\alpha}^{\min }(%
\mathbf{v})$  and a bijection (local chart) 
\begin{equation*}
\Psi _{\mathbf{v}}^{(\alpha)}:\mathbb{G}%
(W_{\alpha}^{\min }(\mathbf{v}),\mathbf{V}_{{\alpha}_{\Vert \cdot \Vert
_{\alpha}}})\rightarrow \mathcal{L}(U_{\alpha}^{\min }(\mathbf{v}),W_{\alpha}^{\min }(%
\mathbf{v}))
\end{equation*}%
given by 
\begin{equation*}
\Psi _{\mathbf{v}}^{(\alpha)}(U_{\alpha})=L_{\alpha}:=P_{W_{\alpha}^{\min }(\mathbf{v})\oplus U_{\alpha}^{\min }(\mathbf{v}%
)}|_{U_{\alpha}}\circ (P_{U_{\alpha}^{\min }(\mathbf{v})\oplus W_{\alpha}^{\min }(\mathbf{v}%
)}|_{U_{\alpha}})^{-1}.
\end{equation*}%
Moreover, $U_{\alpha}= (\Psi _{\mathbf{v}}^{(\alpha)})^{-1}(L_{\alpha}) = G(L_{\alpha})=\mathrm{span}\{(id_{\alpha}+L_{\alpha})(u_{\alpha}):u_{\alpha}\in
U_{\alpha}^{\min }(\mathbf{v})\}.$ Clearly, the map 
\begin{equation*}
\boldsymbol{\Psi }_{\mathbf{v}}:%
\mathop{\mathchoice{\raise-0.22em\hbox{\huge
$\times$}} {\raise-0.05em\hbox{\Large $\times$}}{\hbox{\large
$\times$}}{\times}}_{\alpha\in D}\mathbb{G}(W_{\alpha}^{\min }(%
\mathbf{v}),\mathbf{V}_{{\alpha}_{\Vert \cdot \Vert _{\alpha}}})\rightarrow %
\mathop{\mathchoice{\raise-0.22em\hbox{\huge $\times$}}
{\raise-0.05em\hbox{\Large $\times$}}{\hbox{\large $\times$}}{\times}}_{\alpha\in 
D}\mathcal{L}(U_{\alpha}^{\min }(\mathbf{v}),W_{\alpha}^{\min }(%
\mathbf{v})),
\end{equation*}%
defined as $\boldsymbol{\Psi }_{\mathbf{v}}:=%
\mathop{\mathchoice{\raise-0.22em\hbox{\huge $\times$}} {\raise-0.05em\hbox{\Large
$\times$}}{\hbox{\large $\times$}}{\times}}_{\alpha\in D}\Psi _{\mathbf{v}}^{(\alpha)}$ is also
bijective. Furthermore, it is a local chart for an element $\varrho_{\mathfrak{r}}(
\mathbf{v})$ in the
product manifold $\mathop{\mathchoice{\raise-0.22em\hbox{\huge $\times$}}
{\raise-0.05em\hbox{\Large $\times$}}{\hbox{\large
$\times$}}{\times}}_{\alpha \in D}\mathbb{G}_{r_{\alpha}}(V_{\alpha})$ such that $\boldsymbol{\Psi }_{\mathbf{v}}(\varrho_{\mathfrak{r}}(
\mathbf{v}))=\mathfrak{0}:=(0)_{k\in D}.$  

\bigskip

Now, for each $%
\mathbf{v}\in \mathfrak{M}_{\mathfrak{r}}(\mathbf{V}_{D})$ introduce the set 
\begin{equation*}
\mathcal{U}(\mathbf{v}):=\varrho _{\mathfrak{r}}^{-1}\left( 
\mathop{\mathchoice{\raise-0.22em\hbox{\huge
$\times$}} {\raise-0.05em\hbox{\Large $\times$}}{\hbox{\large
$\times$}}{\times}}_{\alpha \in D}\mathbb{G}(W_{\alpha}^{\min }(%
\mathbf{v}),V_{\alpha})\right) =\left\{ \mathbf{w}\in \mathfrak{M}_{\mathfrak{r}}(%
\mathbf{V}_{D}):U_{\alpha}^{\min }(\mathbf{w})\in \mathbb{G}(W_{\alpha}^{\min }(%
\mathbf{v}),V_{\alpha}),\, \alpha \in D\right\} .
\end{equation*}%
Recall that from Proposition  \ref{niceinverse} we can identify 
the linear space  $\mathcal{L}(U_{\alpha}^{\min }(%
\mathbf{v}),W_{\alpha}^{\min }(\mathbf{v}))$ with a sub-algebra of 
$\mathcal{L}(V_{\alpha_{\|\cdot\|_{\alpha}}},V_{\alpha_{\|\cdot\|_{\alpha}}})$ for $\alpha \in D.$
Then, from Example \ref{local_lee_group}, the map 
$$
\mathop{\mathchoice{\raise-0.22em\hbox{\huge
 $\times$}} {\raise-0.05em\hbox{\Large $\times$}}{\hbox{\large
 $\times$}}{\times}}_{\alpha\in D}\mathcal{L}(U_{\alpha}^{\min }(%
 \mathbf{v}),W_{\alpha}^{\min }(\mathbf{v})) \rightarrow
 \mathop{\mathchoice{\raise-0.22em\hbox{\huge
 $\times$}} {\raise-0.05em\hbox{\Large $\times$}}{\hbox{\large
 $\times$}}{\times}}_{\alpha\in D}\mathcal{L}(U_{\alpha}^{\min }(%
 \mathbf{v}),V_{\alpha}) 
$$
between normed spaces given by
$$
\mathfrak{L}=(L_{\alpha})_{\alpha \in D} \mapsto ((id_{\alpha} + L_{\alpha})|_{U_{\alpha}^{\min}(\mathbf{v})})_{\alpha \in D} = 
(\exp(L_{\alpha})|_{U_{\alpha}^{\min}(\mathbf{v})})_{\alpha \in D}
$$
is clearly $\mathcal{C}^{\infty}$-Fr\'echet differentiable.
Finally, from Proposition \ref{linear_bigotimes}, the map
$$
\mathop{\mathchoice{\raise-0.22em\hbox{\huge
 $\times$}} {\raise-0.05em\hbox{\Large $\times$}}{\hbox{\large
 $\times$}}{\times}}_{\alpha\in D}\mathcal{L}(U_{\alpha}^{\min }(%
 \mathbf{v}),W_{\alpha}^{\min }(\mathbf{v})) \rightarrow 
 \mathcal{L}\left( \left.
_{a}\bigotimes_{\alpha \in D}U_{\alpha}^{\min }(\mathbf{v})\right.
, \mathbf{V}_D \right)
$$
given by
$$
\mathfrak{L}=(L_{\alpha})_{\alpha \in D} \mapsto \bigotimes_{\alpha \in D}\exp(L_{\alpha})|_{U_{\alpha}^{\min}(\mathbf{v})}
$$
is also $\mathcal{C}^{\infty}$-Fr\'echet differentiable.

\bigskip

Our next step is to characterise the representation of tensors that 
belong to $\mathcal{U}(\mathbf{v})$ by using the following lemma.

\begin{lemma}\label{element}
Assume that
$(V_{\alpha},\|\cdot\|_{\alpha})$ is a normed space for each $\alpha \in D$ 
and that $\|\cdot\|_D$ is a norm 
on the tensor space $\mathbf{V}_D = \left.
_{a}\bigotimes_{\alpha \in D}V_{\alpha}\right.$ such that the tensor
product map (\ref{bigotimes}) is continuous.
For $\mathbf{v} \in  \mathfrak{M}_{\mathfrak{r}}(\mathbf{V}_D)$ the following statements are equivalent.
\begin{enumerate}
\item[(a)] $\mathbf{w} \in \mathcal{U}(\mathbf{v}).$ 
\item[(b)] There exists a unique
$$
(\mathfrak{L}, \mathbf{u}(E^{(D)})) =( (L_{\alpha})_{\alpha \in D},\mathbf{u}(E^{(D)})) \in \mathop{\mathchoice{\raise-0.22em\hbox{\huge
 $\times$}} {\raise-0.05em\hbox{\Large $\times$}}{\hbox{\large
 $\times$}}{\times}}_{\alpha\in D}\mathcal{L}(U_{\alpha}^{\min }(%
 \mathbf{v}),W_{\alpha}^{\min }(\mathbf{v}))
\times 
\mathfrak{M}_{\mathfrak{r}}\left( \left._a \bigotimes_{\alpha \in D} U_{\alpha}^{\min}(\mathbf{v}) \right. \right)
 $$ 
 such that
\begin{align*}
\mathbf{w} =  \left( \bigotimes_{
\alpha \in D}\exp(L_{\alpha})\right) (\mathbf{u}(E^{(D)})).
\end{align*}
\end{enumerate}
\end{lemma}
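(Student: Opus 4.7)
\emph{Plan.} The lemma essentially rewrites the parametrization of $\mathcal{U}(\mathbf{v})$ through the Grassmann chart composed with the tensor‐product map. I would fix, once and for all, a basis $\{u_{i_\alpha}^{(\alpha)}:1\le i_\alpha\le r_\alpha\}$ of $U_\alpha^{\min}(\mathbf{v})$ for each $\alpha\in D$ (this is the basis implicit in the identification of Remark~\ref{identification}), and then show that the two parametrizations (by subspaces versus by pairs $(\mathfrak{L},E^{(D)})$) are in bijective correspondence.

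\textbf{(b)$\Rightarrow$(a).} Starting from $\mathfrak{L}=(L_\alpha)_{\alpha\in D}$ and $\mathbf{u}(E^{(D)})\in\mathfrak{M}_{\mathfrak{r}}(\left._a\bigotimes_{\alpha\in D}U_\alpha^{\min}(\mathbf{v})\right.)$, I would apply the elementary tensor $\bigotimes_{\alpha\in D}\exp(L_\alpha)$ to $\mathbf{u}(E^{(D)})$ and use linearity to obtain
$$\mathbf{w}=\sum_{\substack{1\le i_\alpha\le r_\alpha\\ \alpha\in D}}E^{(D)}_{(i_\alpha)_{\alpha\in D}}\bigotimes_{\alpha\in D}\exp(L_\alpha)(u_{i_\alpha}^{(\alpha)}).$$
By Example~\ref{local_lee_group}, $\exp(L_\alpha)=id_\alpha+L_\alpha$ is a linear isomorphism from $U_\alpha^{\min}(\mathbf{v})$ onto $G(L_\alpha)=(\Psi_\mathbf{v}^{(\alpha)})^{-1}(L_\alpha)\in\mathbb{G}(W_\alpha^{\min}(\mathbf{v}),V_{\alpha_{\|\cdot\|_\alpha}})$, so $\{\exp(L_\alpha)(u_{i_\alpha}^{(\alpha)})\}_{i_\alpha}$ is a linearly independent family in $G(L_\alpha)$. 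Since $E^{(D)}\in\mathbb{R}_\ast^{\times_{\alpha\in D}r_\alpha}$, Lemma~\ref{characterization_FT} applied with this basis yields $\mathbf{w}\in\mathfrak{M}_{\mathfrak{r}}(\mathbf{V}_D)$ with $U_\alpha^{\min}(\mathbf{w})=G(L_\alpha)$. By Lemma~\ref{normed_G} (applied in the setting of Example~\ref{normed_grassmann}), $G(L_\alpha)\subset V_\alpha$, so $U_\alpha^{\min}(\mathbf{w})\in\mathbb{G}(W_\alpha^{\min}(\mathbf{v}),V_\alpha)$, i.e., $\mathbf{w}\in\mathcal{U}(\mathbf{v})$.

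\textbf{(a)$\Rightarrow$(b) and uniqueness.} Conversely, if $\mathbf{w}\in\mathcal{U}(\mathbf{v})$, then $U_\alpha^{\min}(\mathbf{w})\in\mathbb{G}(W_\alpha^{\min}(\mathbf{v}),V_{\alpha_{\|\cdot\|_\alpha}})$ is in the domain of the chart $\Psi_\mathbf{v}^{(\alpha)}$, so there is a unique $L_\alpha:=\Psi_\mathbf{v}^{(\alpha)}(U_\alpha^{\min}(\mathbf{w}))\in\mathcal{L}(U_\alpha^{\min}(\mathbf{v}),W_\alpha^{\min}(\mathbf{v}))$ with $U_\alpha^{\min}(\mathbf{w})=G(L_\alpha)=\exp(L_\alpha)(U_\alpha^{\min}(\mathbf{v}))$. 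Then $\{\exp(L_\alpha)(u_{i_\alpha}^{(\alpha)})\}_{i_\alpha}$ is a basis of $U_\alpha^{\min}(\mathbf{w})$, and applying Lemma~\ref{characterization_FT}(b) to $\mathbf{w}\in\mathfrak{M}_{\mathfrak{r}}(\mathbf{V}_D)$ with this basis produces a unique $E^{(D)}\in\mathbb{R}_\ast^{\times_{\alpha\in D}r_\alpha}$ with
$$\mathbf{w}=\sum_{\substack{1\le i_\alpha\le r_\alpha\\ \alpha\in D}}E^{(D)}_{(i_\alpha)_{\alpha\in D}}\bigotimes_{\alpha\in D}\exp(L_\alpha)(u_{i_\alpha}^{(\alpha)})=\left(\bigotimes_{\alpha\in D}\exp(L_\alpha)\right)(\mathbf{u}(E^{(D)})).$$
Uniqueness of $\mathfrak{L}$ follows from bijectivity of $\boldsymbol{\Psi}_\mathbf{v}$ (the $U_\alpha^{\min}(\mathbf{w})$ are intrinsic to $\mathbf{w}$ by Proposition~\ref{minimal}), and uniqueness of $E^{(D)}$ from the uniqueness clause in Lemma~\ref{characterization_FT}(b) once the basis of $U_\alpha^{\min}(\mathbf{w})$ is fixed as above.

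\textbf{Main obstacle.} Technically the argument is a bookkeeping exercise, but the one delicate point is ensuring that $U_\alpha^{\min}(\mathbf{w})$ genuinely lives in $V_\alpha$ (not merely in the completion $V_{\alpha_{\|\cdot\|_\alpha}}$); this is exactly where Lemma~\ref{normed_G} and the normed‐space Grassmannian of Example~\ref{normed_grassmann} do the work, and where one must remember that $\exp(L_\alpha)u_{i_\alpha}^{(\alpha)}=u_{i_\alpha}^{(\alpha)}+L_\alpha u_{i_\alpha}^{(\alpha)}\in V_\alpha$ because both $U_\alpha^{\min}(\mathbf{v})$ and $W_\alpha^{\min}(\mathbf{v})\cap V_\alpha$ sit inside $V_\alpha$.
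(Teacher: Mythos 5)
Your proposal is correct and follows essentially the same route as the paper: fix bases of the $U_\alpha^{\min}(\mathbf{v})$, read off the unique $\mathfrak{L}$ from the Grassmann chart $\boldsymbol{\Psi}_{\mathbf{v}}$, and obtain the unique $E^{(D)}$ from Lemma~\ref{characterization_FT}(b) applied with the transported basis $\{\exp(L_\alpha)(u_{i_\alpha}^{(\alpha)})\}$. You are in fact slightly more thorough than the paper, which dismisses the direction (b)$\Rightarrow$(a) as clear, whereas you spell it out and correctly invoke Lemma~\ref{normed_G} to guarantee $G(L_\alpha)\subset V_\alpha$.
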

\begin{proof}
Assume that $\mathbf{w}\in \mathcal{U}(\mathbf{v}).$ Then 
we have the following facts:
\begin{enumerate}
\item[(i)] From Lemma \ref{characterization_FT}(b) there exist bases
$\mathcal{B}_\alpha=\{u_{i_{\alpha}}^{(\alpha)}: 1 \le i_{\alpha} \le r_{\alpha}\}$, $\alpha\in D$, 
and a unique $C^{(D)} \in \mathbb{R}_*^{%
\mathop{\mathchoice{\raise-0.22em\hbox{\huge $\times$}}
{\raise-0.05em\hbox{\Large $\times$}}{\hbox{\large
$\times$}}{\times}}_{\alpha \in D}r_{\alpha}},$ once the bases are fixed, such that
$\mathbf{v} = \sum_{\substack{1 \le i_{\alpha} \le r_{\alpha} \\ \alpha \in D}} C^{(D)}_{(i_{\alpha})_{\alpha \in D}} \bigotimes_{\alpha \in D} u^{(\alpha)}_{i_{\alpha}}  \in  \mathfrak{M}_{\mathfrak{r}}(\mathbf{v}).$   
From Remark \ref{identification}, we know that 
$\mathcal{B}_\alpha$ is a basis  of $U_{\alpha}^{\min }(\mathbf{v})$ for $\alpha\in D.$  Now, we will
consider that the bases $\mathcal{B}_\alpha$, $\alpha \in D$, are fixed. 

\item[(ii)] Since $U_{\alpha}^{\min}(\mathbf{w}) \in \mathbb{G}%
(W_{\alpha}^{\min}(\mathbf{v}),V_{{\alpha}_{\|\cdot\|_{\alpha}}}),$ for $\alpha \in D,$ 
there exists a unique
\begin{equation*}
\mathfrak{L}=(L_{\alpha})_{\alpha \in D} \in 
\mathop{\mathchoice{\raise-0.22em\hbox{\huge
$\times$}} {\raise-0.05em\hbox{\Large $\times$}}{\hbox{\large
$\times$}}{\times}}_{\alpha \in D}\mathcal{L}(U_{\alpha}^{\min}(\mathbf{v}%
),W_{\alpha}^{\min}(\mathbf{v}))
\end{equation*}
such that $\boldsymbol{\Psi }_{\mathbf{v}}(\varrho_{\mathfrak{r}}(\mathbf{w}%
)) = \mathfrak{L},$ that is, $U_{\alpha}^{\min}(\mathbf{w})=G(L_{\alpha}) = 
\mathrm{span}\,\{(id_{\alpha}+L_{\alpha})(u_{i_{\alpha}}^{(\alpha)}): 1 \le i_{\alpha} \le r_{\alpha} \}$ for all $\alpha
\in D.$ Then 
$$(U_{\alpha}^{\min}(\mathbf{w}))_{\alpha \in D} = \boldsymbol{%
\Psi }_{\mathbf{v}}^{-1}(\mathfrak{L})
$$ and we can construct from $\mathcal{B}_{\alpha},$ a basis of
$U_{\alpha}^{\min}(\mathbf{w}).$
In particular we have $%
\boldsymbol{\Psi }_{\mathbf{v}}(\varrho_{\mathfrak{r}}(\mathbf{v})) =(0)_{\alpha
\in D}.$

\item[(iii)] Now by Lemma~\ref{characterization_FT}(b), since a basis  
of $U_{\alpha}^{\min}(\mathbf{w})=G(L_{\alpha}) = 
\mathrm{span}\,\{(id_{\alpha}+L_{\alpha})(u_{i_{\alpha}}^{(\alpha)}): 1 \le i_{\alpha} \le r_{\alpha} \}$
for $\alpha \in D$ is fixed, there exists a unique 
$ E^{(D)} \in \mathbb{R%
}_*^{\mathfrak{r}}$ such that 
\begin{equation}  \label{w_equation}
\mathbf{w}=\sum_{\substack{ 1\leq i_{\alpha }\leq r_{\alpha }  \\ \alpha \in
D}}E_{(i_{\alpha })_{\alpha \in D}}^{(D)}\bigotimes_{\alpha \in D}
(id_{\alpha}+L_{\alpha})(u_{i_{\alpha}}^{(\alpha)}) = \bigotimes_{\alpha \in D}
(id_{\alpha}+L_{\alpha}) \left( \mathbf{u}(E^{(D)})\right),
\end{equation}
where
$$
\mathbf{u}(E^{(D)}) := \sum_{\substack{ 1\leq i_{\alpha }\leq r_{\alpha }  \\ \alpha \in
D}}E_{(i_{\alpha })_{\alpha \in D}}^{(D)}\bigotimes_{\alpha \in D}
u_{i_{\alpha}}^{(\alpha)} \in \mathfrak{M}_{\mathfrak{r}}\left(
\left._a \bigotimes_{\alpha \in D} U_{\alpha}^{\min}(\mathbf{v}) \right.
\right) = \mathbb{R}_*^{
\mathop{\mathchoice{\raise-0.22em\hbox{\huge $\times$}}
{\raise-0.05em\hbox{\Large $\times$}}{\hbox{\large
$\times$}}{\times}}_{\alpha \in D}r_{\alpha}}.
$$
\end{enumerate}
It follows (b). From what was said above, (b) clearly implies (a).
\end{proof}

\bigskip

\begin{remark}
We can interpret Lemma~\ref{element} as follows. $\mathbf{w}\in \mathcal{U}(%
\mathbf{v})$ holds if and only if 
\begin{equation*}
\mathbf{w}\in \left( \bigotimes_{\alpha\in 
D}\exp(L_{\alpha})\right) \left(\mathfrak{M}_{\mathfrak{r}}\left( \left. _{a}\bigotimes_{\alpha\in 
D}U_{\alpha}^{\min }(\mathbf{v})\right. \right) \right)
\end{equation*}%
for some $\mathfrak{L} = (L_\alpha)_{\alpha \in D}\in 
\mathop{\mathchoice{\raise-0.22em\hbox{\huge
$\times$}} {\raise-0.05em\hbox{\Large $\times$}}{\hbox{\large
$\times$}}{\times}}_{\alpha\in D}\mathcal{L}(U_{\alpha}^{\min }(%
\mathbf{v}),W_{\alpha}^{\min }(\mathbf{v})).$ In consequence, each neighbourhood
of $\mathbf{v}$ in $\mathfrak{M}_{\mathfrak{r}}(\mathbf{V}_{D})$ can be
written as 
\begin{equation}\label{openset}
\mathcal{U}(\mathbf{v})=\bigcup_{\mathfrak{L}\in 
\mathop{\mathchoice{\raise-0.22em\hbox{\huge
$\times$}} {\raise-0.05em\hbox{\Large $\times$}}{\hbox{\large
$\times$}}{\times}}_{\alpha \in D}\mathcal{L}(U_{\alpha}^{\min }(%
\mathbf{v}),W_{\alpha}^{\min }(\mathbf{v}))}
\left( \bigotimes_{\alpha\in D}\exp(L_{\alpha})\right)  \left(\mathfrak{M}_{\mathfrak{r}}\left(
\left. _{a}\bigotimes_{\alpha\in D}U_{\alpha}^{\min }(\mathbf{v}%
)\right. \right) \right) ,
\end{equation}%
that is, a union of manifolds (each of them diffeomeorphic to $\mathbb{R}_*^{
\mathop{\mathchoice{\raise-0.22em\hbox{\huge $\times$}}
{\raise-0.05em\hbox{\Large $\times$}}{\hbox{\large
$\times$}}{\times}}_{\alpha \in D}r_{\alpha}}$) indexed by a Banach manifold. 
\end{remark}

\bigskip

Now, also by using Lemma~\ref{element}, 
we construct an explicit manifold structure for $\mathfrak{M}_{\mathfrak{r}}
(\mathbf{V}_D).$ Indeed, Lemma~\ref{element} allows 
us to define for each $\mathbf{v} \in \mathfrak{M}_{\mathfrak{r}}(\mathbf{v}),$ 
once a basis of $U_{\alpha}^{\min}(\mathbf{v})$ for each $\alpha \in D$ is fixed, a bijective
map 
\begin{equation*}
\xi_{\mathbf{v}}:\mathcal{U}(%
\mathbf{v})\rightarrow \left( 
\mathop{\mathchoice{\raise-0.22em\hbox{\huge
$\times$}} {\raise-0.05em\hbox{\Large $\times$}}{\hbox{\large
$\times$}}{\times}}_{\alpha \in D}\mathcal{L}(U_{\alpha}^{\min }(%
\mathbf{v}),W_{\alpha}^{\min }(\mathbf{v}))\right) \times \mathbb{R}^{
\mathop{\mathchoice{\raise-0.22em\hbox{\huge $\times$}}
{\raise-0.05em\hbox{\Large $\times$}}{\hbox{\large
$\times$}}{\times}}_{\alpha \in D}r_{\alpha}}_*,
\end{equation*}%
by
$$
\xi_{\mathbf{v}} \left(\left( \bigotimes_{\alpha\in D}\exp(L_{\alpha})\right) (\mathbf{u}(C^{(D)}))\right) := (\mathfrak{L},C^{(D)}),
$$
where $\mathfrak{L}:=(L_{\alpha})_{\alpha \in D}.$
Clearly, $\xi_{\mathbf{v}}$ is a bijective map and hence $\mathcal{U}(\mathbf{v})$
can be identified with the Banach manifold
$$\left( 
\mathop{\mathchoice{\raise-0.22em\hbox{\huge
$\times$}} {\raise-0.05em\hbox{\Large $\times$}}{\hbox{\large
$\times$}}{\times}}_{\alpha \in D}\mathbb{G}(W_{\alpha}^{\min }(%
\mathbf{v}),V_{\alpha})\right) \times \mathbb{R}_*^{
\mathop{\mathchoice{\raise-0.22em\hbox{\huge $\times$}}
{\raise-0.05em\hbox{\Large $\times$}}{\hbox{\large
$\times$}}{\times}}_{\alpha \in D}r_{\alpha}},
$$  
which is modelled on the Banach space
$$
\left( 
\mathop{\mathchoice{\raise-0.22em\hbox{\huge
$\times$}} {\raise-0.05em\hbox{\Large $\times$}}{\hbox{\large
$\times$}}{\times}}_{\alpha \in D}\mathcal{L}(U_{\alpha}^{\min }(%
\mathbf{v}),W_{\alpha}^{\min }(%
\mathbf{v}))\right) \times \mathbb{R}^{
\mathop{\mathchoice{\raise-0.22em\hbox{\huge $\times$}}
{\raise-0.05em\hbox{\Large $\times$}}{\hbox{\large
$\times$}}{\times}}_{\alpha \in D}r_{\alpha}}.
$$

\bigskip

The next lemma allows us to prove 
that $\{(\mathcal{U}(\mathbf{v}),\xi_{\mathbf{v}})\}_{\mathbf{v} \in \mathfrak{M}_{\mathfrak{r}}
(\mathbf{V}_D)}$ is a local chart system for the set of tensors in Tucker format with fixed
rank $\mathfrak{r}.$

\begin{lemma}
\label{premanifold} Assume that
$(V_{\alpha},\|\cdot\|_{\alpha})$ is a normed space for each $\alpha \in D$ 
and that $\|\cdot\|_D$ is a norm 
on the tensor space $\mathbf{V}_D = \left.
_{a}\bigotimes_{\alpha \in D}V_{\alpha}\right.$ such that the tensor
product map (\ref{bigotimes}) is continuous. Let $\mathbf{v},\mathbf{v}^{\prime }\in \mathfrak{M}_{%
\mathfrak{r}}(\mathbf{V}_{D})$ be such that $\mathcal{U}(\mathbf{v}) \cap 
\mathcal{U}(\mathbf{v}^{\prime }) \neq \emptyset.$ Then the bijective map 
\begin{equation*}
\xi_{\mathbf{v}^{\prime}} \circ\xi_{\mathbf{v}}^{-1}:\xi_{\mathbf{v}}\left(\mathcal{U}(\mathbf{v}) \cap 
\mathcal{U}(\mathbf{v}^{\prime })\right)
\rightarrow \xi_{\mathbf{v}^{\prime}}\left(\mathcal{U}(\mathbf{v}) \cap 
\mathcal{U}(\mathbf{v}^{\prime })\right)
\end{equation*}
is $\mathcal{C}^{\infty}$-Fr\'{e}chet differentiable.
\end{lemma}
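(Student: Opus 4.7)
The plan is to split the chart $\xi_{\mathbf{v}}$ into its Grassmann factor (recording the minimal subspaces $U_\alpha^{\min}(\mathbf{w})$) and its coefficient factor (the core tensor $C^{(D)}$), and to treat each block separately. Given $(\mathfrak{L}, C^{(D)}) \in \xi_{\mathbf{v}}(\mathcal{U}(\mathbf{v}) \cap \mathcal{U}(\mathbf{v}'))$, the associated tensor is
$$
\mathbf{w} = \Bigl( \bigotimes_{\alpha \in D} \exp(L_\alpha) \Bigr)(\mathbf{u}(C^{(D)})) = \sum_{i} C^{(D)}_{i} \bigotimes_{\alpha \in D} \exp(L_\alpha)(u_{i_\alpha}^{(\alpha)}),
$$
with $U_\alpha^{\min}(\mathbf{w}) = G(L_\alpha) = \exp(L_\alpha)(U_\alpha^{\min}(\mathbf{v}))$. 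The image $(\mathfrak{L}', C^{\prime(D)}) = (\xi_{\mathbf{v}'} \circ \xi_{\mathbf{v}}^{-1})(\mathfrak{L}, C^{(D)})$ is then determined by the two requirements $G(L'_\alpha) = U_\alpha^{\min}(\mathbf{w})$ and the analogous expansion of $\mathbf{w}$ in the basis of $U_\alpha^{\min}(\mathbf{v}')$ transported by $\exp(L'_\alpha)$.

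The Grassmann block $\mathfrak{L} \mapsto \mathfrak{L}'$ is precisely the product over $\alpha \in D$ of the Banach--Grassmann transitions $\Psi_{U_\alpha^{\min}(\mathbf{v}') \oplus W_\alpha^{\min}(\mathbf{v}')} \circ \Psi_{U_\alpha^{\min}(\mathbf{v}) \oplus W_\alpha^{\min}(\mathbf{v})}^{-1}$, which were shown to be analytic in Example~\ref{Grassmann_Example}; hence this factor is analytic. For the coefficient block, I would introduce the linear map
$$
M_\alpha := P_{U_\alpha^{\min}(\mathbf{v}') \oplus W_\alpha^{\min}(\mathbf{v}')} \circ \exp(L_\alpha)|_{U_\alpha^{\min}(\mathbf{v})} : U_\alpha^{\min}(\mathbf{v}) \longrightarrow U_\alpha^{\min}(\mathbf{v}').
$$
On the overlap one has $V_{\alpha_{\|\cdot\|_\alpha}} = G(L_\alpha) \oplus W_\alpha^{\min}(\mathbf{v}')$, so Lemma~\ref{Char_Projections} applied to the pair $(G(L_\alpha), U_\alpha^{\min}(\mathbf{v}'))$ with common complement $W_\alpha^{\min}(\mathbf{v}')$ shows that the restriction of $P_{U_\alpha^{\min}(\mathbf{v}') \oplus W_\alpha^{\min}(\mathbf{v}')}$ to $G(L_\alpha)$ is a bounded isomorphism onto $U_\alpha^{\min}(\mathbf{v}')$ whose inverse is $\exp(L'_\alpha)|_{U_\alpha^{\min}(\mathbf{v}')}$. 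Consequently $\exp(L_\alpha)|_{U_\alpha^{\min}(\mathbf{v})} = \exp(L'_\alpha) \circ M_\alpha$, and writing $\tilde{M}_\alpha \in \mathrm{GL}(r_\alpha,\mathbb{R})$ for its matrix in the fixed bases $\mathcal{B}_\alpha, \mathcal{B}'_\alpha$ one reads off
$$
C^{\prime(D)}_{j} = \sum_{i} \Bigl( \prod_{\alpha \in D} (\tilde{M}_\alpha)_{j_\alpha, i_\alpha} \Bigr) C^{(D)}_{i}.
$$

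Since $L_\alpha \mapsto M_\alpha$ is affine and continuous (the sum of the fixed operator $P_{U_\alpha^{\min}(\mathbf{v}') \oplus W_\alpha^{\min}(\mathbf{v}')}|_{U_\alpha^{\min}(\mathbf{v})}$ and $P_{U_\alpha^{\min}(\mathbf{v}') \oplus W_\alpha^{\min}(\mathbf{v}')} \circ L_\alpha$), the assignment $\mathfrak{L} \mapsto (\tilde{M}_\alpha)_{\alpha \in D}$ is analytic, and the Kronecker-product action $((\tilde{M}_\alpha)_\alpha, C^{(D)}) \mapsto \bigl(\bigotimes_\alpha \tilde{M}_\alpha\bigr) \cdot C^{(D)}$ is a polynomial map between finite-dimensional spaces, hence analytic. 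Composing the two analytic blocks shows that $\xi_{\mathbf{v}'} \circ \xi_{\mathbf{v}}^{-1}$ is analytic, and in particular $\mathcal{C}^\infty$-Fr\'echet differentiable. The key obstacle I foresee is verifying cleanly that $M_\alpha$ realises the change of basis between the two factorised representations of $\mathbf{w}$ and that it remains invertible throughout the overlap; once Lemma~\ref{Char_Projections} is applied in the appropriate configuration of complements, the remainder reduces to a routine composition of analytic maps between Banach and finite-dimensional spaces.
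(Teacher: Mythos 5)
Your proof is correct, and while it shares the paper's overall skeleton (factor the transition into the Grassmann block acting on $\mathfrak{L}$ and the coefficient block acting on $C^{(D)}$, and import the analyticity of the Grassmann transitions from Example~\ref{Grassmann_Example}), your treatment of the coefficient block is genuinely different from the paper's. The paper writes $\mathbf{u}'(E^{(D)})=\bigl(\bigotimes_{\alpha}\exp(-L'_\alpha)\circ\exp(L_\alpha)\bigr)(\mathbf{u}(C^{(D)}))$ and simplifies it to $\bigotimes_{\alpha}\exp(L_\alpha-L'_\alpha)$ using the nilpotency relations of Proposition~\ref{niceinverse}; it then obtains smoothness by composing the analytic map $L_\alpha\mapsto L'_\alpha$ with $\exp$, the operator tensor product of Proposition~\ref{linear_bigotimes}, and a continuous multilinear evaluation map. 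You instead define the change-of-basis operator $M_\alpha=P_{U_\alpha^{\min}(\mathbf{v}')\oplus W_\alpha^{\min}(\mathbf{v}')}\circ\exp(L_\alpha)|_{U_\alpha^{\min}(\mathbf{v})}$, invoke Lemma~\ref{Char_Projections} for the pair $G(L_\alpha)$, $U_\alpha^{\min}(\mathbf{v}')$ with common complement $W_\alpha^{\min}(\mathbf{v}')$ to see that $M_\alpha$ is invertible with $\exp(L_\alpha)|_{U_\alpha^{\min}(\mathbf{v})}=\exp(L'_\alpha)\circ M_\alpha$, and reduce the coefficient update to the finite-dimensional Kronecker action $C^{(D)}\mapsto(\bigotimes_\alpha\tilde M_\alpha)C^{(D)}$ with $\mathfrak{L}\mapsto(\tilde M_\alpha)_\alpha$ affine. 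This buys two things: it makes manifest that the transition preserves the open set $\mathbb{R}_*^{\times_{\alpha}r_\alpha}$ (each $\tilde M_\alpha$ is invertible, so the unfolding ranks are preserved), and it sidesteps the slightly delicate step in the paper where $\exp(-L'_\alpha)\circ\exp(L_\alpha)$ is simplified even though $L_\alpha$ and $L'_\alpha$ live in the two different subalgebras $\mathcal{L}_{(U_\alpha^{\min}(\mathbf{v}),W_\alpha^{\min}(\mathbf{v}))}$ and $\mathcal{L}_{(U_\alpha^{\min}(\mathbf{v}'),W_\alpha^{\min}(\mathbf{v}'))}$. The price is that you only conclude $\mathcal{C}^\infty$ smoothness of the coefficient block through the finite-dimensional polynomial structure rather than through the general machinery of Proposition~\ref{linear_bigotimes}, but for the statement as claimed this is entirely sufficient.
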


\begin{proof}
Let $\mathbf{w}\in \mathcal{U}(\mathbf{v})\cap 
\mathcal{U}(\mathbf{v}^{\prime })$ be such that $\xi _{\mathbf{v}}(\mathbf{w})=(\mathfrak{L},\mathbf{u}(C^{(D)})%
)$ and $\xi_{\mathbf{v}^{\prime }}(\mathbf{w})=(\mathfrak{L}^{\prime },\mathbf{u}^{\prime}(E^{(D)}),$ that is,
$$
(\xi_{\mathbf{v}^{\prime
}} \circ \xi^{-1}_{\mathbf{v}})(\mathfrak{L},\mathbf{u}(C^{(D)})) = (\mathfrak{L}^{\prime },
\mathbf{u}^{\prime}(E^{(D)})).
$$
Since $%
\mathbf{w}\in \mathcal{U}(\mathbf{v})\cap \mathcal{U}(\mathbf{v}^{\prime })$
then 
\begin{equation*}
\varrho _{\mathfrak{r}}(\mathbf{w})=(U_{\alpha}^{\min }(\mathbf{w}))_{\alpha\in 
D}\in \left( 
\mathop{\mathchoice{\raise-0.22em\hbox{\huge
$\times$}} {\raise-0.05em\hbox{\Large $\times$}}{\hbox{\large
$\times$}}{\times}}_{\alpha \in D}\mathbb{G}(W_{\alpha}^{\min }(%
\mathbf{v}),V_{\alpha})\right) \cap \left( 
\mathop{\mathchoice{\raise-0.22em\hbox{\huge
$\times$}} {\raise-0.05em\hbox{\Large $\times$}}{\hbox{\large
$\times$}}{\times}}_{\alpha \in D}\mathbb{G}(W_{\alpha}^{\min }(%
\mathbf{v}^{\prime }),V_{\alpha})\right)
\end{equation*}
and 
\begin{equation*}
(\Psi _{\mathbf{v}^{\prime }}\circ \Psi _{\mathbf{v}}^{-1})(\Psi _{\mathbf{v}%
}((U_{\alpha}^{\min }(\mathbf{w}))_{\alpha\in D}))=\Psi _{\mathbf{v}%
^{\prime }}(U_{\alpha}^{\min }(\mathbf{w}))_{\alpha \in D}),
\end{equation*}%
that is, 
\begin{equation*}
(\Psi _{\mathbf{v}^{\prime }}\circ \Psi _{\mathbf{v}}^{-1})(\mathfrak{L})=%
\mathfrak{L}^{\prime }.
\end{equation*}%
Hence 
\begin{equation*}
\xi_{\mathbf{v}%
^{\prime }} (\mathbf{w})=((\Psi _{\mathbf{v}^{\prime }}\circ \Psi _{%
\mathbf{v}}^{-1})(\mathfrak{L}),\mathbf{u}^{\prime}(E^{(D)})),
\end{equation*}%
where $\Psi _{\mathbf{v}^{\prime }}\circ \Psi _{\mathbf{v}}^{-1}$ is an
analytic map. On the other hand, since
\begin{align*}
\mathbf{w} & = \xi^{-1}_{\mathbf{v}}(\mathfrak{L},C^{(D)}) =\left( \bigotimes_{\alpha\in D}\exp(L_{\alpha})\right) (%
\mathbf{u}(C^{(D)})) =\xi^{-1}_{\mathbf{v}%
^{\prime }}(\mathfrak{L}',E^{(D)}) = \left( \bigotimes_{\alpha\in D}\exp(L_{\alpha}^{\prime
})\right) (\mathbf{u}^{\prime }(E^{(D)})),
\end{align*}%
we have
\begin{align*}
\mathbf{u}'(E^{(D)}) & =\left(  \bigotimes_{\alpha\in D} \exp(-L_{\alpha}')\circ \exp(L_{\alpha})\right) 
(\mathbf{u}(C^{(D)})) = \left(  \bigotimes_{\alpha\in D} \exp(L_{\alpha}-L_{\alpha}^{\prime})\right) 
(\mathbf{u}(C^{(D)})),
\end{align*}%
because from Proposition \ref{niceinverse} 
$L_{\alpha} \circ L_{\alpha}^{\prime} = L_{\alpha}^{\prime} \circ L_{\alpha} = 0$ holds.
In consequence, 
\begin{equation*}
\mathbf{u}^{\prime}(E^{(D)})=f(\mathfrak{L},\mathbf{u}(C^{(D)})):= \left(\bigotimes_{\alpha\in D} 
 \exp(L_{\alpha} - (\Psi _{\mathbf{v}^{\prime}}^{(\alpha)} \circ (\Psi _{\mathbf{v}}^{(\alpha)})^{-1})(L_{\alpha})\right) 
(\mathbf{u}(C^{(D)}))
\end{equation*}%
where
\begin{equation*}
f:\mathop{\mathchoice{\raise-0.22em\hbox{\huge $\times$}}
{\raise-0.05em\hbox{\Large $\times$}}{\hbox{\large $\times$}}{\times}}_{\alpha\in 
D}\mathcal{L}(U_{\alpha}^{\min }(\mathbf{v}),W_{\alpha}^{\min }(%
\mathbf{v}))\times \mathfrak{M}_{\mathfrak{r}}\left(\left. _{a}\bigotimes_{\alpha \in D}U_{\alpha}^{\min }(\mathbf{v}%
)\right. \right)\rightarrow \mathfrak{M}_{\mathfrak{r}}\left(\left. _{a}\bigotimes_{\alpha \in D}U_{\alpha}^{\min }(\mathbf{v}^{\prime}
)\right. \right).
\end{equation*}
To prove the lemma we claim that the map $f$ is $\mathcal{C}^{\infty}$-Fr\'echet differentiable.

\bigskip

Recall that for each $\alpha \in D$ the map given by
$$
L_{\alpha}' = \left(\Psi _{\mathbf{v}^{\prime}}^{(\alpha)} \circ (\Psi _{\mathbf{v}}^{(\alpha)})^{-1}\right)(L_{\alpha})
$$ 
is analytic because $\mathbb{G}_{r_{\alpha}}(V_{\alpha})$ is an analytic Banach manifold.
Since we can identify the linear space  $\mathcal{L}(U_{\alpha}^{\min }(%
\mathbf{v}),W_{\alpha}^{\min }(\mathbf{v}))$ with a sub-algebra of 
$\mathcal{L}(V_{\alpha_{\|\cdot\|_{\alpha}}},V_{\alpha_{\|\cdot\|_{\alpha}}}),$
from Example \ref{local_lee_group}, we know that
$$\exp:\mathcal{L}(U_{\alpha}^{\min }(%
 \mathbf{v}),W_{\alpha}^{\min }(\mathbf{v}))
 \rightarrow \mathrm{GL}(\mathcal{L}(U_{\alpha}^{\min }(%
 \mathbf{v}),W_{\alpha}^{\min }(\mathbf{v})))$$  
 is analytic for each $\alpha \in D.$
In consequence, the map 
$$
L_{\alpha} \mapsto \exp\left(L_{\alpha}-\left(\Psi _{\mathbf{v}^{\prime}}^{(\alpha)} \circ (\Psi _{\mathbf{v}}^{(\alpha)})^{-1}\right)(L_{\alpha})\right)
$$
is also analytic for each $\alpha \in D.$ Finally, we conclude by using 
Proposition \ref{linear_bigotimes} that the map
$$
\mathop{\mathchoice{\raise-0.22em\hbox{\huge
 $\times$}} {\raise-0.05em\hbox{\Large $\times$}}{\hbox{\large
 $\times$}}{\times}}_{\alpha\in D}\mathcal{L}(U_{\alpha}^{\min }(%
 \mathbf{v}),W_{\alpha}^{\min }(\mathbf{v})) \rightarrow 
 \mathcal{L}\left( \left.
_{a}\bigotimes_{\alpha \in D}U_{\alpha}^{\min }(\mathbf{v})\right.
, \mathbf{V}_D \right)
$$
given by
$$
(L_{\alpha})_{\alpha \in D} \mapsto \bigotimes_{\alpha \in D}\left.\exp\left(L_{\alpha}-\left(\Psi _{\mathbf{v}^{\prime}}^{(\alpha)} \circ (\Psi _{\mathbf{v}}^{(\alpha)})^{-1}\right)(L_{\alpha})\right)\right|_{U_{\alpha}^{\min}(\mathbf{v})}
$$
is $\mathcal{C}^{\infty}$-Fr\'echet differentiable. Observe that $f$ can be written by using the evaluation map
$$
\mathrm{eval}: 
\mathcal{L}\left( \left.
_{a}\bigotimes_{\alpha \in D}U_{\alpha}^{\min }(\mathbf{v})\right.
,\left. _{a}\bigotimes_{\alpha \in D}U_{\alpha}^{\min }(\mathbf{v}^{\prime}
)\right. \right)
\times \left. _{a}\bigotimes_{k\in D}U_{k}^{\min }(\mathbf{v}
)\right.  \rightarrow \left. _{a}\bigotimes_{k\in D}U_{k}^{\min }(\mathbf{v}^{\prime}
)\right.
$$
given by
$$
\mathrm{eval}\left(F, \sum_{\substack{ 1\leq i_{\alpha }\leq r_{\alpha }  \\ \alpha \in
D}}E_{(i_{\alpha })_{\alpha \in D}}^{(D)}\bigotimes_{\alpha \in D}
u_{i_{\alpha}}^{(\alpha)}\right) = F \left(\sum_{\substack{ 1\leq i_{\alpha }\leq r_{\alpha }  \\ \alpha \in
D}}E_{(i_{\alpha })_{\alpha \in D}}^{(D)}u_{i_{\alpha}}^{(\alpha)} \right),
$$
which is multilinear and continuous. From Proposition \ref{multilinear}, 
it is also $\mathcal{C}^{\infty}$-Fr\'{e}chet differentiable.
Since
$$
 f(\mathfrak{L},\mathbf{u}(C^{(D)})) = \mathrm{eval}\left(\left( \bigotimes_{\alpha\in D}\exp\left(L_{\alpha} - (\Psi _{\mathbf{v}^{\prime}}^{(\alpha)} \circ (\Psi _{\mathbf{v}}^{(\alpha)})^{-1})(L_{\alpha})\right)\right) , \mathbf{u}(C^{(D)}) \right),
$$
the claim follows. We recall that 
$$
 \mathfrak{M}_{\mathfrak{r}}\left(\left. _{a}\bigotimes_{k\in D}U_{k}^{\min }(\mathbf{v}^{\prime}
)\right. \right) = \mathfrak{M}_{\mathfrak{r}}\left(\left. _{a}\bigotimes_{k\in D}U_{k}^{\min }(\mathbf{v}
)\right. \right) = \mathbb{R}_*^{
\mathop{\mathchoice{\raise-0.22em\hbox{\huge $\times$}}
{\raise-0.05em\hbox{\Large $\times$}}{\hbox{\large
$\times$}}{\times}}_{\alpha \in D}r_{\alpha}}.
$$
Thus the lemma is proved.
\end{proof}

\begin{remark}\label{analytic1}
Observe that if we assume that $(V_{\alpha},\|\cdot\|_{\alpha})$ is a complex Banach space 
for each $\alpha \in D$ and  $\|\cdot\|_D$ is a norm 
on the complex tensor space $\mathbf{V}_D = \left.
_{a}\bigotimes_{\alpha \in D}V_{\alpha}\right.$ such that the tensor
product map (\ref{bigotimes}) is continuous, from Proposition \ref{Analytic_otimes}, we have that
the extension of the tensor product map (\ref{bigotimes}) is analytic. Moreover, the map
$$
\mathop{\mathchoice{\raise-0.22em\hbox{\huge
 $\times$}} {\raise-0.05em\hbox{\Large $\times$}}{\hbox{\large
 $\times$}}{\times}}_{\alpha\in D}\mathcal{L}(U_{\alpha}^{\min }(%
 \mathbf{v}),W_{\alpha}^{\min }(\mathbf{v})) \rightarrow
 \mathop{\mathchoice{\raise-0.22em\hbox{\huge
 $\times$}} {\raise-0.05em\hbox{\Large $\times$}}{\hbox{\large
 $\times$}}{\times}}_{\alpha\in D}\mathcal{L}(U_{\alpha}^{\min }(%
 \mathbf{v}),V_{\alpha}), \quad
 \mathfrak{L}=(L_{\alpha})_{\alpha \in D} \mapsto (id_{\alpha} + L_{\alpha})_{\alpha \in D} = 
 (\exp(L_{\alpha}))_{\alpha \in D}
$$
between the product of complex Banach spaces is clearly analytic.
In consequence, under the above assumptions it can be shown that 
the bijective map $\xi_{\mathbf{v}^{\prime}} \circ\xi_{\mathbf{v}}^{-1}$ is analytic.
\end{remark}

Before stating the next result we recall the definition of  a fibre bundle.

\begin{definition}
A $\mathcal{C}^k$-fibre bundle $(E,B,\pi),$ where $k \ge 0,$ 
with typical fibre $F$ (a given manifold) is a $\mathcal{C}^k$-surjective morphism 
of $\mathcal{C}^k$ 
manifolds $\pi : E \rightarrow B$ 
which is locally a product, that is, the  $\mathcal{C}^k$-manifold $B$ 
has an open atlas $\{(U_{\alpha},\xi_{\alpha})\}_{\alpha \in A}$ 
such that for each $\alpha \in A$ there is a $\mathcal{C}^k$ diffeomorphism 
$\chi_{\alpha}: \pi^{-1}(U_{\alpha}) \rightarrow U_{\alpha} \times F$ 
such that $p_{\alpha} \circ \chi_{\alpha} = \pi,$ 
where $p_{\alpha} : U_{\alpha} \times F \rightarrow U_{\alpha}$ is the projection. 
The $\mathcal{C}^k$ manifolds $E$ and $B$ 
are called the \emph{total space} and \emph{base} of the fibre bundle, respectively. 
For each $b \in B,$ $\pi^{-1}(b) = E_b$ is called the \emph{fibre} 
over $b.$ The $\mathcal{C}^k$ diffeomorphisms $\chi_{\alpha}$ 
are called \emph{fibre bundle charts}.
\end{definition}

\begin{theorem}
\label{Tucker_Banach_Manifold} Assume that
$(V_{\alpha},\|\cdot\|_{\alpha})$ is a normed space for each $\alpha \in D$ 
and that $\|\cdot\|_D$ is a norm 
on the tensor space $\mathbf{V}_D = \left.
_{a}\bigotimes_{\alpha \in D}V_{\alpha}\right.$ such that the tensor
product map (\ref{bigotimes}) is continuous.
Then the collection $\{\mathcal{U}(\mathbf{v}),\xi_{\mathbf{v}}\}_{\mathbf{v}\in \mathfrak{M}_{\mathfrak{%
r}}(\mathbf{V}_{D})}$ is a  $\mathcal{C}^{\infty}$-atlas for $\mathfrak{M}_{\mathfrak{r}}(%
\mathbf{V}_{D})$ and hence it is a $\mathcal{C}^{\infty}$-Banach manifold modelled on a Banach space
$$
\left( 
\mathop{\mathchoice{\raise-0.22em\hbox{\huge
$\times$}} {\raise-0.05em\hbox{\Large $\times$}}{\hbox{\large
$\times$}}{\times}}_{\alpha \in D}\mathcal{L}(U_{\alpha},W_{\alpha})\right) \times \mathbb{R}^{
\mathop{\mathchoice{\raise-0.22em\hbox{\huge $\times$}}
{\raise-0.05em\hbox{\Large $\times$}}{\hbox{\large
$\times$}}{\times}}_{\alpha \in D}r_{\alpha}},
$$
here $U_{\alpha} \in \mathbb{G}_{r_{\alpha}}(V_{\alpha})$ and $V_{\alpha_{\|\cdot\|_{\alpha}}} = U_{\alpha} \oplus W_{\alpha},$ where $V_{{\alpha}_{\|\cdot\|_{\alpha}}}$ is the completion of
$V_{\alpha}$ for $\alpha \in D.$ Moreover, 
$$
\left(\mathfrak{M}_{\mathfrak{r}}(%
\mathbf{V}_{D}),\mathop{\mathchoice{\raise-0.22em\hbox{\huge
 $\times$}} {\raise-0.05em\hbox{\Large $\times$}}{\hbox{\large
 $\times$}}{\times}}_{\alpha\in D}\mathbb{G}_{r_{\alpha}}(V_{\alpha}),\varrho_{\mathfrak{r}}\right)
$$
is a $\mathcal{C}^{\infty}$-fibre bundle with typical
 fibre $\mathbb{R}_{*}^{\mathop{\mathchoice{\raise-0.22em\hbox{\huge
 $\times$}} {\raise-0.05em\hbox{\Large $\times$}}{\hbox{\large
 $\times$}}{\times}}_{\alpha\in D}r_{\alpha}}.$
\end{theorem}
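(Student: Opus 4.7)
The plan is to verify the three atlas axioms AT1--AT3 for the collection $\{(\mathcal{U}(\mathbf{v}),\xi_{\mathbf{v}})\}_{\mathbf{v}\in \mathfrak{M}_{\mathfrak{r}}(\mathbf{V}_D)}$, and then to construct the fibre bundle trivialisations from the same data.

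For AT1, it suffices to observe that for every $\mathbf{v}\in \mathfrak{M}_{\mathfrak{r}}(\mathbf{V}_D)$ one has $\mathbf{v}\in \mathcal{U}(\mathbf{v})$, since $U_\alpha^{\min}(\mathbf{v}) \in \mathbb{G}(W_\alpha^{\min}(\mathbf{v}), V_\alpha)$ by construction; hence the sets $\mathcal{U}(\mathbf{v})$ cover $\mathfrak{M}_{\mathfrak{r}}(\mathbf{V}_D)$. For AT2, Lemma~\ref{element} already tells us that $\xi_{\mathbf{v}}$ is a bijection onto the product $\bigl(\mathop{\times}_{\alpha\in D}\mathcal{L}(U_\alpha^{\min}(\mathbf{v}), W_\alpha^{\min}(\mathbf{v}))\bigr)\times \mathbb{R}_*^{\mathop{\times}_{\alpha\in D}r_\alpha}$, and by Remark~\ref{open_set} the set $\mathbb{R}_*^{\mathop{\times}_{\alpha\in D}r_\alpha}$ is open in $\mathbb{R}^{\mathop{\times}_{\alpha\in D}r_\alpha}$, so the image is an open subset of the model Banach space. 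To check the openness of $\xi_{\mathbf{v}}(\mathcal{U}(\mathbf{v})\cap \mathcal{U}(\mathbf{v}'))$, note that this intersection equals $\varrho_{\mathfrak{r}}^{-1}\bigl(\mathop{\times}_{\alpha\in D}(\mathbb{G}(W_\alpha^{\min}(\mathbf{v}),V_\alpha)\cap \mathbb{G}(W_\alpha^{\min}(\mathbf{v}'),V_\alpha))\bigr)$, and under $\xi_{\mathbf{v}}$ this corresponds in the first factor to $\boldsymbol{\Psi}_{\mathbf{v}}$ applied to this open subset of $\mathop{\times}_{\alpha\in D}\mathbb{G}_{r_\alpha}(V_\alpha)$ (open by the Grassmann-Banach manifold structure of Example~\ref{normed_grassmann}) and in the second factor to all of $\mathbb{R}_*^{\mathop{\times}_{\alpha\in D}r_\alpha}$; hence the image is open. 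AT3 is exactly the content of Lemma~\ref{premanifold}: the transition $\xi_{\mathbf{v}'}\circ \xi_{\mathbf{v}}^{-1}$ is $\mathcal{C}^{\infty}$-Fr\'echet differentiable.

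To obtain the fibre bundle structure, I take as base $\mathbf{B}=\mathop{\times}_{\alpha\in D}\mathbb{G}_{r_\alpha}(V_\alpha)$ with its open atlas $\{(\mathop{\times}_{\alpha\in D}\mathbb{G}(W_\alpha^{\min}(\mathbf{v}),V_\alpha),\boldsymbol{\Psi}_{\mathbf{v}})\}_{\mathbf{v}\in \mathfrak{M}_{\mathfrak{r}}(\mathbf{V}_D)}$, and I use the chart $\xi_{\mathbf{v}}$ to produce the trivialisation
\begin{equation*}
\chi_{\mathbf{v}}:\varrho_{\mathfrak{r}}^{-1}\!\Bigl(\mathop{\mathchoice{\raise-0.22em\hbox{\huge $\times$}}{\raise-0.05em\hbox{\Large $\times$}}{\hbox{\large $\times$}}{\times}}_{\alpha\in D}\mathbb{G}(W_\alpha^{\min}(\mathbf{v}),V_\alpha)\Bigr)\longrightarrow \Bigl(\mathop{\mathchoice{\raise-0.22em\hbox{\huge $\times$}}{\raise-0.05em\hbox{\Large $\times$}}{\hbox{\large $\times$}}{\times}}_{\alpha\in D}\mathbb{G}(W_\alpha^{\min}(\mathbf{v}),V_\alpha)\Bigr)\times \mathbb{R}_*^{\mathop{\mathchoice{\raise-0.22em\hbox{\huge $\times$}}{\raise-0.05em\hbox{\Large $\times$}}{\hbox{\large $\times$}}{\times}}_{\alpha\in D}r_\alpha}
\end{equation*}
defined by $\chi_{\mathbf{v}}(\mathbf{w})=(\varrho_{\mathfrak{r}}(\mathbf{w}),C^{(D)})$ where $\xi_{\mathbf{v}}(\mathbf{w})=(\mathfrak{L},C^{(D)})$; equivalently $\chi_{\mathbf{v}}=(\boldsymbol{\Psi}_{\mathbf{v}}^{-1}\times \mathrm{id})\circ \xi_{\mathbf{v}}$. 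By construction the projection on the first factor satisfies $p_{\mathbf{v}}\circ \chi_{\mathbf{v}}=\varrho_{\mathfrak{r}}$, so local triviality is immediate. That $\chi_{\mathbf{v}}$ is a $\mathcal{C}^{\infty}$-diffeomorphism follows since $\xi_{\mathbf{v}}$ is a chart (by the first part of the theorem) and $\boldsymbol{\Psi}_{\mathbf{v}}^{-1}$ is an analytic diffeomorphism coming from the Grassmann-Banach atlas.

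It then remains to check that $\varrho_{\mathfrak{r}}$ is itself a $\mathcal{C}^{\infty}$-morphism; reading it through the charts $\xi_{\mathbf{v}}$ on the source and $\boldsymbol{\Psi}_{\mathbf{v}}$ on the target, it becomes the linear projection $(\mathfrak{L},C^{(D)})\mapsto \mathfrak{L}$, which is trivially smooth. The typical fibre is $\varrho_{\mathfrak{r}}^{-1}((U_\alpha^{\min}(\mathbf{v}))_{\alpha\in D})$, and this is identified via $\xi_{\mathbf{v}}$ with $\{\mathbf{0}\}\times \mathbb{R}_*^{\mathop{\times}_{\alpha\in D}r_\alpha}$, consistent with Remark~\ref{identification}.

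The main obstacle in the plan is AT3, and that has already been dispatched by Lemma~\ref{premanifold}; the more delicate bookkeeping then lies in verifying that the local trivialisations $\chi_{\mathbf{v}}$ are genuinely $\mathcal{C}^{\infty}$-diffeomorphisms in the sense of the fibre bundle definition, which reduces to combining the smoothness of the Grassmann overlap maps $\Psi_{\mathbf{v}'}^{(\alpha)}\circ (\Psi_{\mathbf{v}}^{(\alpha)})^{-1}$ from Example~\ref{normed_grassmann} with the smoothness of the multilinear evaluation and tensor product maps established in Propositions~\ref{multilinear} and~\ref{linear_bigotimes}.
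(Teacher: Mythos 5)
Your proposal is correct and follows essentially the same route as the paper: AT1 by the tautological covering, AT2 from Lemma~\ref{element} together with the openness of $\mathbb{R}_*^{\times_{\alpha\in D}r_\alpha}$, AT3 from Lemma~\ref{premanifold}, and the fibre bundle trivialisations $\chi_{\mathbf{v}}$ built by composing $\xi_{\mathbf{v}}$ with the Grassmannian chart so that $p_{\mathbf{v}}\circ\chi_{\mathbf{v}}=\varrho_{\mathfrak{r}}$ and $\varrho_{\mathfrak{r}}$ reads in coordinates as the projection $(\mathfrak{L},C^{(D)})\mapsto\mathfrak{L}$. The only differences are cosmetic: you supply the openness check for overlap images that the paper leaves implicit, and you write the trivialisation as $(\boldsymbol{\Psi}_{\mathbf{v}}^{-1}\times\mathrm{id})\circ\xi_{\mathbf{v}}$, which is the correct direction of composition (the paper's $(\Psi_{\mathbf{v}}\times\mathrm{id})\circ\xi_{\mathbf{v}}$ appears to be a typographical slip).
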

\begin{proof}
Since  $\{(\mathcal{U}(\mathbf{v}),\xi_{\mathbf{v}})\}_{\mathbf{v} \in \mathfrak{M}_{\mathfrak{r}}
(\mathbf{V}_D)}$ satisfies AT1, Lemma ~\ref{element} implies AT2 and AT3 follows from Lemma  \ref{premanifold},  we obtain the first statement. To prove the second one we observe that the
local chart system $\{(\mathcal{U}(\mathbf{v}),\xi_{\mathbf{v}})\}$ for the manifold
$\mathfrak{M}_{\mathfrak{r}}(\mathbf{V}_{D})$ allows us to write the morphism
$$
\varrho_{\mathfrak{r}}: \mathfrak{M}_{\mathfrak{r}}(%
 \mathbf{V}_{D}) \rightarrow \mathop{\mathchoice{\raise-0.22em\hbox{\huge
  $\times$}} {\raise-0.05em\hbox{\Large $\times$}}{\hbox{\large
  $\times$}}{\times}}_{\alpha\in D}\mathbb{G}_{r_{\alpha}}(V_{\alpha}), \quad
  \mathbf{v} \mapsto (U_{\alpha}^{\min}(\mathbf{v}))_{\alpha \in D},
  $$ 
locally as a map
$$
\left( 
\mathop{\mathchoice{\raise-0.22em\hbox{\huge
$\times$}} {\raise-0.05em\hbox{\Large $\times$}}{\hbox{\large
$\times$}}{\times}}_{\alpha \in D}\mathcal{L}(U_{\alpha}^{\min}(\mathbf{v}),W_{\alpha}^{\min}(\mathbf{v}))\right) \times \mathbb{R}_*^{
\mathop{\mathchoice{\raise-0.22em\hbox{\huge $\times$}}
{\raise-0.05em\hbox{\Large $\times$}}{\hbox{\large
$\times$}}{\times}}_{\alpha \in D}r_{\alpha}} \rightarrow \mathop{\mathchoice{\raise-0.22em\hbox{\huge
$\times$}} {\raise-0.05em\hbox{\Large $\times$}}{\hbox{\large
$\times$}}{\times}}_{\alpha \in D}\mathcal{L}(U_{\alpha}^{\min}(\mathbf{v}),W_{\alpha}^{\min}(\mathbf{v})), 
$$
given by
$$
((L_{\alpha})_{\alpha \in D},E^{(D)}) \mapsto (L_{\alpha})_{\alpha \in D}.
$$
Thus, $\varrho_{\mathfrak{r}}$ is a $C^{\infty}$-surjective morphism. Moreover, by construction of the atlases, for each $\mathbf{v} \in \mathfrak{M}_{\mathfrak{r}}(\mathbf{V}_{D})$ the map
$\chi_{\mathbf{v}}:=(\Psi_{\mathbf{v}} \times id_{\mathbb{R}_{*}^{\mathop{\mathchoice{\raise-0.22em\hbox{\huge
 $\times$}} {\raise-0.05em\hbox{\Large $\times$}}{\hbox{\large
 $\times$}}{\times}}_{\alpha\in D}r_{\alpha}}}
) \circ \xi_{\mathbf{v}}$ where
$$
\chi_{\mathbf{v}}:\mathcal{U}(\mathbf{v})=\varrho _{\mathfrak{r}}^{-1}\left( 
\mathop{\mathchoice{\raise-0.22em\hbox{\huge
$\times$}} {\raise-0.05em\hbox{\Large $\times$}}{\hbox{\large
$\times$}}{\times}}_{\alpha \in D}\mathbb{G}(W_{\alpha}^{\min }(%
\mathbf{v}),V_{\alpha})\right) \rightarrow \left( 
\mathop{\mathchoice{\raise-0.22em\hbox{\huge
$\times$}} {\raise-0.05em\hbox{\Large $\times$}}{\hbox{\large
$\times$}}{\times}}_{\alpha \in D}\mathbb{G}(W_{\alpha}^{\min }(%
\mathbf{v}),V_{\alpha})\right) \times \mathbb{R}_{*}^{\mathop{\mathchoice{\raise-0.22em\hbox{\huge
 $\times$}} {\raise-0.05em\hbox{\Large $\times$}}{\hbox{\large
 $\times$}}{\times}}_{\alpha\in D}r_{\alpha}}
$$
is a $C^{\infty}$-diffeomorphism satisfying
$\pi_{\mathbf{v}} \circ \chi_{\mathbf{v}} = \varrho_{\mathfrak{r}}$ where
$$
\pi_{\mathbf{v}}:\left( 
\mathop{\mathchoice{\raise-0.22em\hbox{\huge
$\times$}} {\raise-0.05em\hbox{\Large $\times$}}{\hbox{\large
$\times$}}{\times}}_{\alpha \in D}\mathbb{G}(W_{\alpha}^{\min }(%
\mathbf{v}),V_{\alpha})\right) \times \mathbb{R}_{*}^{\mathop{\mathchoice{\raise-0.22em\hbox{\huge
 $\times$}} {\raise-0.05em\hbox{\Large $\times$}}{\hbox{\large
 $\times$}}{\times}}_{\alpha\in D}r_{\alpha}} \rightarrow 
 \mathop{\mathchoice{\raise-0.22em\hbox{\huge
$\times$}} {\raise-0.05em\hbox{\Large $\times$}}{\hbox{\large
$\times$}}{\times}}_{\alpha \in D}\mathbb{G}(W_{\alpha}^{\min }(%
\mathbf{v}),V_{\alpha}), \quad ((U_{\alpha})_{\alpha \in D},E^{(D)}) 
\mapsto (U_{\alpha})_{\alpha \in D}.
$$ 
In consequence, the second statement is proved.
\end{proof}

\begin{remark}
We point out that for $d=2$ the typical fibre is the Lie group $\mathrm{GL}(\mathbb{R}^r)$
for some $r \ge 1$ and for $\mathfrak{r}=\mathbf{1}$ (and any $d\ge 2$) the typical fibre is the
Lie group $\mathrm{GL}(\mathbb{R})=\mathbb{R}\setminus\{0\}.$ Then in both cases we have that 
the fibre bundle is a principal bundle, that is, a fibre bundle which has as a typical fibre
a Lie group.
\end{remark}

\begin{remark}
Assume that $(V_{\alpha},\|\cdot\|_{\alpha})$ is a complex Banach space 
for each $\alpha \in D$ and  $\|\cdot\|_D$ is a norm 
on the complex tensor space $\mathbf{V}_D = \left.
_{a}\bigotimes_{\alpha \in D}V_{\alpha}\right.$ such that the tensor
product map (\ref{bigotimes}) is continuous. From Remark \ref{analytic1} we have that
the collection $\{\mathcal{U}(\mathbf{v}),\xi_{\mathbf{v}}\}_{\mathbf{v}\in \mathfrak{M}_{\mathfrak{%
r}}(\mathbf{V}_{D})}$ is an analytic atlas for $\mathfrak{M}_{\mathfrak{r}}(%
\mathbf{V}_{D})$ and hence it is an analytic Banach manifold modelled on a Banach space
$$
\left( 
\mathop{\mathchoice{\raise-0.22em\hbox{\huge
$\times$}} {\raise-0.05em\hbox{\Large $\times$}}{\hbox{\large
$\times$}}{\times}}_{\alpha \in D}\mathcal{L}(U_{\alpha},W_{\alpha})\right) \times \mathbb{C}^{
\mathop{\mathchoice{\raise-0.22em\hbox{\huge $\times$}}
{\raise-0.05em\hbox{\Large $\times$}}{\hbox{\large
$\times$}}{\times}}_{\alpha \in D}r_{\alpha}},
$$
here $U_{\alpha} \in \mathbb{G}_{r_{\alpha}}(V_{\alpha})$ and $V_{\alpha} = U_{\alpha} \oplus W_{\alpha}$
for $\alpha \in D.$
\end{remark}

We point out that the norm $\|\cdot\|_D$ in $\mathbf{V}_D$ is only used in
the proof of Lemma \ref{premanifold} in order to endow 
the finite-dimensional tensor space $\left.
_{a}\bigotimes_{\alpha \in D}U_{\alpha}^{\min }(\mathbf{v})\right.$
with a structure of finite-dimensional Banach space 
for each $\mathbf{v} \in \mathfrak{M}_{\mathfrak{r}}(\mathbf{V}_D)$. Thus,
the geometric structure of manifold is independent of the
choice of the norm $\|\cdot\|_D$ over the tensor space $\mathbf{V}_D.$ We 
illustrate this assertion with the following example.

\begin{example}
\label{example_BM1} Let $V_{1_{\|\cdot\|_1}} := H^{1,p}(I_1)$ and $%
V_{2_{\|\cdot\|_2}}= H^{1,p}(I_2),$ with $\|\cdot\|_\alpha = \|\cdot\|_{1,p,I_\alpha}$, and $1 \le p < \infty.$ Take $\mathbf{V}_D:= H^{1,p}(I_1)
\otimes_a H^{1,p}(I_2).$ Now, we can consider as ambient Banach space either 
$$\overline{\mathbf{V}_{D}%
}^{\|\cdot\|_{D,1}} := H^{1,p}(I_1 \times I_2),
$$ with $\|\cdot\|_{D,1} = \|\cdot\|_{1,p}$,
or $$\overline{\mathbf{V}_{D}}%
^{\|\cdot\|_{D,2}} = H^{1,p}(I_1) \otimes_{\|\cdot\|_{D,2}}
H^{1,p}(I_2), $$ where $\|\cdot\|_{D,2} := \|\cdot\|_{(0,1),p}$ is the norm given by 
\begin{equation*}
\|f\|_{(0,1),p}:= \left(\|f\|_p^p + \left\|\frac{\partial f}{\partial x_2}%
\right\|_p^p\right)^{1/p}.
\end{equation*}
 The tensor product map (\ref{bigotimes}) is continuous for both norms (see Examples 4.41 and 4.42 in \cite{Hackbusch})
and hence from Theorem~\ref{Tucker_Banach_Manifold} we obtain that for each $r \ge 1$
the set $\mathfrak{M}_{(r,r)}(\mathbf{V}_D)$ is a $\mathcal{C}^{\infty}$-Banach manifold modelled on 
$$
\mathcal{L}(U_{1},W_{1}) \times \mathcal{L}(U_{2},W_{2})  \times \mathrm{GL}(\mathbb{R}^r),
$$
here $U_i \in \mathbb{G}_{r}(H^{1,p}(I_i))$ and $H^{1,p}(I_i)=U_i \oplus W_i$ for $i=1,2.$
\end{example}

The next result gives us the conditions to have a Hilbert manifold.

\begin{corollary}
\label{hilbert_manifold_tensor} Assume that
$(V_{\alpha},\|\cdot\|_{\alpha})$ is a normed space such that $V_{\alpha_{\|\cdot\|_{\alpha}}}$ is a Hilbert space 
for each $\alpha \in D$ 
and let $\|\cdot\|_D$ be a norm 
on the tensor space $\mathbf{V}_D = \left.
_{a}\bigotimes_{\alpha \in D}V_{\alpha}\right.$ such that the tensor
product map (\ref{bigotimes}) is continuous. Then $\mathfrak{M}_{\mathfrak{r}}(\mathbf{V}_{D})$ is a 
$\mathcal{C}^{\infty}$-Hilbert manifold modelled on a Hilbert space
$$
\mathop{\mathchoice{\raise-0.22em\hbox{\huge
$\times$}} {\raise-0.05em\hbox{\Large $\times$}}{\hbox{\large
$\times$}}{\times}}_{\alpha \in D} W_{\alpha}^{r_{\alpha}} \times \mathbb{R}^{
\mathop{\mathchoice{\raise-0.22em\hbox{\huge $\times$}}
{\raise-0.05em\hbox{\Large $\times$}}{\hbox{\large
$\times$}}{\times}}_{\alpha \in D}r_{\alpha}},
$$
here  $V_{{\alpha}_{\|\cdot\|_{\alpha}}} = U_{\alpha}\oplus W_{\alpha},$ 
for some $U_{\alpha} \in \mathbb{G}_{r_{\alpha}}(V_{\alpha})$
for $\alpha \in D.$ 
\end{corollary}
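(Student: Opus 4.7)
The plan is to deduce this corollary directly from Theorem \ref{Tucker_Banach_Manifold} by verifying that, under the Hilbert assumption on each $V_{\alpha_{\|\cdot\|_{\alpha}}}$, the Banach model space produced by the theorem can be canonically realised as a Hilbert space. The theorem already provides the $\mathcal{C}^{\infty}$-atlas $\{(\mathcal{U}(\mathbf{v}),\xi_{\mathbf{v}})\}$ and establishes that the model at each $\mathbf{v}$ is
\[
\left(\mathop{\times}_{\alpha \in D}\mathcal{L}(U_{\alpha}^{\min}(\mathbf{v}),W_{\alpha}^{\min}(\mathbf{v}))\right) \times \mathbb{R}^{\mathop{\times}_{\alpha \in D}r_{\alpha}},
\]
so the only thing left is a choice of complements that turns each factor into a Hilbert space.

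First, for every $\alpha \in D$ and every $U_{\alpha} \in \mathbb{G}_{r_{\alpha}}(V_{\alpha})$, I would single out the \emph{orthogonal} complement $W_{\alpha} := U_{\alpha}^{\perp}$ inside the Hilbert space $V_{\alpha_{\|\cdot\|_{\alpha}}}$. Since $U_{\alpha}$ is finite-dimensional it is closed, and hence $V_{\alpha_{\|\cdot\|_{\alpha}}} = U_{\alpha}\oplus W_{\alpha}$ with $W_{\alpha}$ closed, so $W_{\alpha} \in \mathbb{G}(V_{\alpha_{\|\cdot\|_{\alpha}}})$. This canonical choice of complement, inserted into the construction preceding Theorem \ref{Tucker_Banach_Manifold}, produces an equivalent atlas on $\mathfrak{M}_{\mathfrak{r}}(\mathbf{V}_{D})$ whose charts are compatible with the ones built in Subsection \ref{manifold_structure} (compatibility is automatic because the overlap maps were shown to be $\mathcal{C}^{\infty}$ in Lemma \ref{premanifold} for any choice of complements).

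Next, I would identify the linear space $\mathcal{L}(U_{\alpha},W_{\alpha})$ with the Hilbert space $W_{\alpha}^{r_{\alpha}}$. Fixing an orthonormal basis $\{u_{i}^{(\alpha)}\}_{i=1}^{r_{\alpha}}$ of $U_{\alpha}$, the linear evaluation
\[
\mathcal{L}(U_{\alpha},W_{\alpha}) \longrightarrow W_{\alpha}^{r_{\alpha}}, \quad L \mapsto (L(u_{1}^{(\alpha)}),\ldots,L(u_{r_{\alpha}}^{(\alpha)}))
\]
is a topological linear isomorphism (indeed an isometry once $W_{\alpha}^{r_{\alpha}}$ carries its natural product Hilbert norm), which transfers the Hilbert structure onto $\mathcal{L}(U_{\alpha},W_{\alpha})$. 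Taking the Cartesian product over $\alpha \in D$ and appending the finite-dimensional Euclidean factor $\mathbb{R}^{\mathop{\times}_{\alpha \in D}r_{\alpha}}$ yields the Hilbert space stated in the corollary.

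Finally, I would invoke the definition of Hilbert manifold given right after Definition \ref{banach_manifold_definition}: since Theorem \ref{Tucker_Banach_Manifold} has already provided a $\mathcal{C}^{\infty}$-atlas and since the Banach spaces $X_{\alpha}$ modelling the charts are now Hilbert spaces (all linearly isomorphic to the displayed product space, after fixing orthonormal bases), $\mathfrak{M}_{\mathfrak{r}}(\mathbf{V}_{D})$ is a $\mathcal{C}^{\infty}$-Hilbert manifold with the required model. No step is genuinely hard here; the only point that requires mild care is that the identification $\mathcal{L}(U_{\alpha},W_{\alpha}) \cong W_{\alpha}^{r_{\alpha}}$ depends on the (orthonormal) basis of $U_{\alpha}$, but two such choices differ by an element of the orthogonal group $O(r_{\alpha})$ acting as a linear isometry, so the Hilbert structure on the model space — and hence the Hilbert manifold structure — is independent of the chart used in Theorem \ref{Tucker_Banach_Manifold}.
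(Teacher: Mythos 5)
Your proposal is correct and follows essentially the same route as the paper: the key step in both is to identify $\mathcal{L}(U_{\alpha}^{\min}(\mathbf{v}),W_{\alpha}^{\min}(\mathbf{v}))$ with $W_{\alpha}^{\min}(\mathbf{v})^{r_{\alpha}}$ by evaluating on a basis of $U_{\alpha}^{\min}(\mathbf{v})$, so that each chart lands in an open subset of the product Hilbert space $\mathop{\mathchoice{\raise-0.22em\hbox{\huge $\times$}}{\raise-0.05em\hbox{\Large $\times$}}{\hbox{\large $\times$}}{\times}}_{\alpha\in D}W_{\alpha}^{\min}(\mathbf{v})^{r_{\alpha}}\times\mathbb{R}^{\mathop{\mathchoice{\raise-0.22em\hbox{\huge $\times$}}{\raise-0.05em\hbox{\Large $\times$}}{\hbox{\large $\times$}}{\times}}_{\alpha\in D}r_{\alpha}}$. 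Your insistence on orthogonal complements and orthonormal bases is harmless but unnecessary (any closed complement in a Hilbert space is itself a Hilbert space, and any basis gives the required topological isomorphism), and the paper does without it.
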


\begin{proof}
We can identify each $L_{\alpha}\in \mathcal{L}\left( U_{\alpha}^{\min }(\mathbf{v}%
),W_{\alpha}^{\min }(\mathbf{v})\right) $ with a set of vectors $(w_{s_{\alpha}}^{(\alpha
)})_{s_{\alpha}=1}^{s_{\alpha}=r_{\alpha}}\in W_{\alpha}^{\min }(\mathbf{v})^{r_{\alpha}},$
where $w_{s_{\alpha}}^{(\alpha
)}=L_{\alpha}(u_{s_{\alpha}}^{(\alpha)})$ and $%
U_{\alpha}^{\min }(\mathbf{v})=\mathrm{span}\,\{u_{1}^{(\alpha)},\ldots ,%
u_{r_{\alpha}}^{(\alpha)}\}$ for $\alpha \in D.$ Thus we can
identify each $\left(\mathfrak{L},C^{(D)}\right) \in \xi_{\mathbf{v}}(\mathcal{U}(%
\mathbf{v}))$ with a pair%
\begin{equation*}
\left(\mathcal{W},C^{(D)}\right) \in 
\mathop{\mathchoice{\raise-0.22em\hbox{\huge $\times$}} {\raise-0.05em\hbox{\Large $\times$}}{\hbox{\large
$\times$}}{\times}}_{\alpha \in D}W_{\alpha}^{\min }(\mathbf{v})^{r_{\alpha
}} \times \mathbb{R}_*^{
\mathop{\mathchoice{\raise-0.22em\hbox{\huge $\times$}}
{\raise-0.05em\hbox{\Large $\times$}}{\hbox{\large
$\times$}}{\times}}_{\alpha \in D}r_{\alpha}},
\end{equation*}%
where $\mathcal{W}:=((w_{i_{\alpha}}^{(\alpha )})_{i_{\alpha}=1}^{r_{\alpha
}})_{\alpha \in D}.$ Take $%
\mathop{\mathchoice{\raise-0.22em\hbox{\huge $\times$}}
{\raise-0.05em\hbox{\Large $\times$}}{\hbox{\large
$\times$}}{\times}}_{\alpha \in D}W_{\alpha}^{\min }(\mathbf{v})^{r_{\alpha
}} \times \mathbb{R}_*^{
\mathop{\mathchoice{\raise-0.22em\hbox{\huge $\times$}}
{\raise-0.05em\hbox{\Large $\times$}}{\hbox{\large
$\times$}}{\times}}_{\alpha \in D}r_{\alpha}}$ an open subset of the
Hilbert space $%
\mathop{\mathchoice{\raise-0.22em\hbox{\huge $\times$}}
{\raise-0.05em\hbox{\Large $\times$}}{\hbox{\large
$\times$}}{\times}}_{\alpha \in D}W_{\alpha }^{\min }(\mathbf{v}%
)^{r_{\alpha}} \times \mathbb{R}^{
\mathop{\mathchoice{\raise-0.22em\hbox{\huge $\times$}}
{\raise-0.05em\hbox{\Large $\times$}}{\hbox{\large
$\times$}}{\times}}_{\alpha \in D}r_{\alpha}}$ endowed with the inner product norm
\begin{equation*}
\Vert \left(\mathcal{W},C^{(D)}\right) \Vert _{\times,\mathbf{v}}^2:=\Vert C^{(D)}\Vert _{F}^2+\sum_{\alpha
\in D}\sum_{i_{\alpha}=1}^{r_{\alpha}}\Vert w_{i_{\alpha}}^{(\alpha
)}\Vert _{\alpha}^2,
\end{equation*}%
with $\Vert \cdot \Vert _{F}$ the Frobenius norm. 
It allows us to define local charts, also denoted by $\xi_{\mathbf{v}},$ by 
\begin{equation*}
\xi_{\mathbf{v}}^{-1}: 
\mathop{\mathchoice{\raise-0.22em\hbox{\huge $\times$}} {\raise-0.05em\hbox{\Large $\times$}}{\hbox{\large
$\times$}}{\times}}_{\alpha \in D}W_{\alpha }^{\min }(\mathbf{v}%
)^{r_{\alpha}} \times \mathbb{R}_*^{
\mathop{\mathchoice{\raise-0.22em\hbox{\huge $\times$}}
{\raise-0.05em\hbox{\Large $\times$}}{\hbox{\large
$\times$}}{\times}}_{\alpha \in D}r_{\alpha}}\longrightarrow \mathcal{U%
}(\mathbf{v}),
\end{equation*}%
where $\xi_{\mathbf{v}}^{-1}\left( \mathcal{W},C^{(D)}\right) =\mathbf{w}$, putting $L_{\alpha}(%
u_{i_{\alpha}}^{(\alpha )})=w_{i_{\alpha}}^{(\alpha )},$ $1\leq i_{\alpha}\leq
r_{\alpha} $ and $\alpha \in D.$ Since each local chart is defined
over an open subset of the Hilbert space $%
\mathop{\mathchoice{\raise-0.22em\hbox{\huge $\times$}}
{\raise-0.05em\hbox{\Large $\times$}}{\hbox{\large
$\times$}}{\times}}_{\alpha \in D}W_{\alpha}^{\min }(\mathbf{v})^{r_{\alpha
}} \times \mathbb{R}^{
\mathop{\mathchoice{\raise-0.22em\hbox{\huge $\times$}}
{\raise-0.05em\hbox{\Large $\times$}}{\hbox{\large
$\times$}}{\times}}_{\alpha \in D}r_{\alpha}},$ the corollary follows.
\end{proof}

\bigskip

Using the definition of the local charts for the manifold $\mathfrak{M}%
_{\mathfrak{r}}(\mathbf{V}_{D}),$ we can identify its tangent space at $%
\mathbf{v}$ with $\mathbb{T}_{\mathbf{v}}(\mathfrak{M}_{\mathfrak{r}}(%
\mathbf{V}_{D})):=%
\mathop{\mathchoice{\raise-0.22em\hbox{\huge $\times$}}
{\raise-0.05em\hbox{\Large $\times$}}{\hbox{\large $\times$}}{\times}}_{\alpha\in 
D}\mathcal{L}(U_{\alpha}^{\min }(\mathbf{v}),W_{\alpha}^{\min }(%
\mathbf{v}))\times \mathbb{R}^{
\mathop{\mathchoice{\raise-0.22em\hbox{\huge $\times$}}
{\raise-0.05em\hbox{\Large $\times$}}{\hbox{\large
$\times$}}{\times}}_{\alpha \in D}r_{\alpha}}.$ We will consider $\mathbb{T}_{%
\mathbf{v}}(\mathfrak{M}_{\mathfrak{r}}(\mathbf{V}_{D}))$ endowed with the
product norm 
\begin{equation*}
||| (\mathfrak{L},C^{(D)})|||_{\mathbf{v}} := \Vert C^{(D )}\Vert _{F}+\sum_{\alpha \in 
D}\Vert L_{\alpha}\Vert _{W_{\alpha}^{\min }(\mathbf{v})\leftarrow
U_{\alpha}^{\min }(\mathbf{v})}.
\end{equation*}%

Finally, the fact that $\mathbf{V}_D = \bigcup_{
\substack{\mathfrak{r} \in \mathcal{AD}(\mathbf{V}_D)}
} 
\mathfrak{M}_{\mathfrak{r}}(\mathbf{V}_D)$ 
allows us to state the following.

\begin{corollary}
\label{Bounded_Banach_Manifold} Assume that
$(V_{\alpha},\|\cdot\|_{\alpha})$ is a normed space for each $\alpha \in D$ 
and that $\|\cdot\|_D$ is a norm 
on the tensor space $\mathbf{V}_D = \left.
_{a}\bigotimes_{\alpha \in D}V_{\alpha}\right.$ such that the tensor
product map (\ref{bigotimes}) is continuous. 
Then the algebraic tensor space $\mathbf{V}_{D}$ is a $\mathcal{C}^{\infty}$-Banach 
manifold not modelled on a particular Banach space.
\end{corollary}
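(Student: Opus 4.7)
The plan is to assemble the per-stratum atlases given by Theorem~\ref{Tucker_Banach_Manifold} into a single atlas on $\mathbf{V}_D$. The key preliminary observation is that the decomposition
$$
\mathbf{V}_D = \bigcup_{\mathfrak{r} \in \mathcal{AD}(\mathbf{V}_D)} \mathfrak{M}_{\mathfrak{r}}(\mathbf{V}_D)
$$
is in fact \emph{disjoint}: by Proposition~\ref{minimal} every $\mathbf{v} \in \mathbf{V}_D$ has uniquely determined minimal subspaces $U_\alpha^{\min}(\mathbf{v})$, so the tuple $(\dim U_\alpha^{\min}(\mathbf{v}))_{\alpha\in D} \in \mathcal{AD}(\mathbf{V}_D)$ is unique and $\mathbf{v}$ lies in exactly one stratum. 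The degenerate stratum $\mathfrak{M}_{\mathbf{0}}(\mathbf{V}_D) = \{\mathbf{0}\}$ is handled separately by a trivial chart onto the zero Banach space.

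I would then declare the atlas
$$
\mathcal{A}(\mathbf{V}_D) := \{(\{\mathbf{0}\},\xi_{\mathbf{0}})\} \cup \bigcup_{\substack{\mathfrak{r} \in \mathcal{AD}(\mathbf{V}_D) \\ \mathfrak{r} \neq \mathbf{0}}} \{(\mathcal{U}(\mathbf{v}), \xi_{\mathbf{v}}) : \mathbf{v} \in \mathfrak{M}_{\mathfrak{r}}(\mathbf{V}_D)\}
$$
and verify the three axioms of Definition~\ref{banach_manifold_definition}. Condition AT1 is immediate, since each $\mathbf{v}$ is covered by $\mathcal{U}(\mathbf{v})$, and condition AT2 is provided directly by Theorem~\ref{Tucker_Banach_Manifold}, which already guarantees that $\xi_{\mathbf{v}}$ is a bijection onto an open subset of a Banach space. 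For the compatibility condition AT3 there are two cases: when two charts $\xi_{\mathbf{v}}, \xi_{\mathbf{v}'}$ correspond to tensors of the same rank, the transition map $\xi_{\mathbf{v}'} \circ \xi_{\mathbf{v}}^{-1}$ is $\mathcal{C}^\infty$-Fr\'echet differentiable by Lemma~\ref{premanifold}; when the ranks differ, the inclusions $\mathcal{U}(\mathbf{v}) \subset \mathfrak{M}_{\mathfrak{r}(\mathbf{v})}(\mathbf{V}_D)$ and $\mathcal{U}(\mathbf{v}') \subset \mathfrak{M}_{\mathfrak{r}(\mathbf{v}')}(\mathbf{V}_D)$ together with the disjointness of the strata force $\mathcal{U}(\mathbf{v}) \cap \mathcal{U}(\mathbf{v}') = \emptyset$, so AT3 is vacuously satisfied.

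Finally, the fact that $\mathbf{V}_D$ is \emph{not} modelled on a single Banach space will follow because the local model spaces produced by Theorem~\ref{Tucker_Banach_Manifold} depend explicitly on $\mathfrak{r}$: even the finite-dimensional factor $\mathbb{R}^{\prod_{\alpha \in D} r_\alpha}$ varies in dimension from one stratum to another, so no common model can accommodate all charts simultaneously. I do not anticipate a substantive obstacle in this argument; the entire proof reduces to a book-keeping exercise once the disjointness of the rank stratification is noticed, and the one nontrivial input, namely Theorem~\ref{Tucker_Banach_Manifold}, is already established.
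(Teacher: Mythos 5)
Your proposal is correct and follows exactly the route the paper intends: the paper states the corollary as an immediate consequence of the disjoint decomposition $\mathbf{V}_D=\bigcup_{\mathfrak{r}\in\mathcal{AD}(\mathbf{V}_D)}\mathfrak{M}_{\mathfrak{r}}(\mathbf{V}_D)$ together with Theorem~\ref{Tucker_Banach_Manifold}, leaving the details implicit. Your write-up simply makes explicit the book-keeping (disjointness of strata, the trivial chart at $\mathbf{0}$, vacuous compatibility across strata, and the $\mathfrak{r}$-dependence of the model spaces) that the authors omit.
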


\section{The manifold of tensors in Tucker format with fixed rank 
and its natural ambient tensor Banach space}
\label{embedded_manifold}

Consider the tensor space $\mathbf{V}_{D}=\left.
_{a}\bigotimes_{\alpha \in D}V_{\alpha }\right. $ and assume 
that for each $ \alpha \in D$ the vector
space $V_{\alpha}$ is a normed space with a norm $\Vert \cdot \Vert _{\alpha}.$ We
start with a brief discussion about the choice of the ambient manifold for $%
\mathfrak{M}_{\mathfrak{r}}(\mathbf{V}_{D}).$ Recall that in Example \ref{example_BM1} we
have two norms $\Vert \cdot \Vert _{D,1}$ and $\Vert \cdot \Vert
_{D,2}$ on $\mathbf{V}_{D}$ such that the tensor
product map (\ref{bigotimes}) is continuous for both norms. Then we have two natural embeddings 
$\mathbf{V}_{D}\subset \overline{%
\mathbf{V}_{D}}^{\Vert \cdot \Vert _{D,1}}$ and $\mathbf{V}_{D}\subset 
\overline{\mathbf{V}_{D}}^{\Vert \cdot \Vert _{D,2}}.$ 
In this context a natural question about the choice of a norm $\|\cdot\|_{D}$
for the algebraic tensor space $\mathbf{V}_{D}$ appears: What is the good choice 
for this norm to show that $\mathfrak{M}_{%
\mathfrak{r}}(\mathbf{V}_D)$ is an immersed submanifold? 

\bigskip

More precisely, assume that $(V_{\alpha},\|\cdot\|_{\alpha})$ is a normed space 
for each $\alpha \in D$ 
and let $\|\cdot\|_D$ be a norm 
on the tensor space $\mathbf{V}_D = \left.
_{a}\bigotimes_{\alpha \in D}V_{\alpha}\right.$ such that the tensor
product map (\ref{bigotimes}) is continuous. Then
we have a natural ambient space for $\mathfrak{M}_{\mathfrak{r}}(\mathbf{V}%
_{D})$ given by a Banach tensor space $\overline{\mathbf{V}_D}^{\|\cdot\|_D}
= \mathbf{V}_{D_{\|\cdot\|_D}}.$ Since the natural inclusion 
\begin{equation*}
\mathfrak{i}:\mathfrak{M}_{\mathfrak{r}}(\mathbf{V}_{D})\longrightarrow 
\mathbf{V}_{D_{\Vert \cdot \Vert _{D}}},
\end{equation*}
given by $\mathfrak{i}(\mathbf{v})=\mathbf{v},$ is an injective map we will
study $\mathfrak{i}$ as a function between Banach manifolds. To this end we
recall the definition of an immersion between manifolds.

\begin{definition}
Let $F:X\rightarrow Y$ be a morphism between Banach manifolds and let $x\in
X.$ We shall say that $F$ is an \emph{immersion at $x$} if there exists an
open neighbourhood $X_{x}$ of $x$ in $X$ such that the restriction of $F$ to 
$X_{x}$ induces an isomorphism from $X_{x}$ onto a submanifold of $Y.$ We
say that $F$ is an \emph{immersion} if it is an immersion at each point of $%
X.$
\end{definition}

Our next step is to recall the definition of the differential as a morphism
which gives a linear map between the tangent spaces of the manifolds
involved with the morphism.

\begin{definition}
Let $X$ and $Y$ be two Banach manifolds. Let $F:X\rightarrow Y$ be a $%
\mathcal{C}^{r}$ morphism, i.e., 
\begin{equation*}
\psi \circ F\circ \varphi ^{-1}:\varphi (U)\rightarrow \psi (W)
\end{equation*}%
is a $\mathcal{C}^{r}$-Fr\'{e}chet differentiable map, where $(U,\varphi )$
is a chart in $X$ at $x$ and $(W,\psi )$ is a chart in $Y$ at $F(x)$. For $%
x\in X,$ we define 
\begin{equation*}
\mathrm{T}_{x}F:\mathbb{T}_{x}(X)\longrightarrow \mathbb{T}_{F(x)}(Y),\quad
v\mapsto \lbrack (\psi \circ F\circ \varphi ^{-1})^{\prime }(\varphi (x))]v.
\end{equation*}
\end{definition}

For Banach manifolds we have the following criterion for immersions (see
Theorem 3.5.7 in \cite{MRA}).

\begin{proposition}
\label{prop_inmersion} Let $X,Y$ be Banach manifolds of class $\mathcal{C}%
^{p}$ $(p\geq 1).$ Let $F:X\rightarrow Y$ be a $\mathcal{C}^{p}$ morphism
and $x\in X.$ Then $F$ is an immersion at $x$ if and only if $\mathrm{T}%
_{x}F $ is injective and $\mathrm{T}_{x}F(\mathbb{T}_{x}(X)) \in \mathbb{G}(%
\mathbb{T}_{F(x)}(Y)).$
\end{proposition}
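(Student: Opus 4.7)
The plan is to prove both implications via a standard argument using the inverse function theorem in Banach spaces. For the forward direction, suppose $F$ is an immersion at $x$, so by definition there is an open neighbourhood $X_x \ni x$ such that $F|_{X_x}$ is an isomorphism onto a submanifold $N \subset Y$. By the defining property of a submanifold, around $F(x)$ there exists a chart of $Y$ in which $N$ corresponds locally to a split subspace of the model space. Transporting this through $F|_{X_x}$ and differentiating, $\mathrm{T}_x F$ gets conjugated to the inclusion of a split subspace, which immediately yields injectivity of $\mathrm{T}_x F$ together with $\mathrm{T}_x F(\mathbb{T}_x(X)) \in \mathbb{G}(\mathbb{T}_{F(x)}(Y))$.

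For the converse, I would work in local charts to reduce the claim to a statement about maps between open sets in Banach spaces. Pick charts $(U,\varphi)$ at $x$ and $(W,\psi)$ at $F(x)$ with models $E$ and $E'$, and set $\tilde F := \psi \circ F \circ \varphi^{-1}: \varphi(U) \subset E \to \psi(W) \subset E'$. The hypotheses say that $D\tilde F(\varphi(x)): E \to E'$ is injective with image $M$ admitting a closed complement $N$, so $E' = M \oplus N$. Introduce the auxiliary $\mathcal{C}^p$-map
\begin{equation*}
G: \varphi(U) \times N \longrightarrow E', \qquad G(u, n) := \tilde F(u) + n .
\end{equation*}
Its Fr\'echet derivative at $(\varphi(x), 0)$ is the bounded linear map $(h, k) \mapsto D\tilde F(\varphi(x))h + k$, which by construction is a toplinear isomorphism $E \times N \to M \oplus N = E'$. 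The Banach inverse function theorem then makes $G$ a local $\mathcal{C}^p$-diffeomorphism between a neighbourhood of $(\varphi(x), 0)$ and a neighbourhood of $\psi(F(x))$.

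Composing $\psi$ with $G^{-1}$ yields a new chart on $Y$ around $F(x)$ in which $F$ takes the canonical immersion form $e \mapsto (e, 0) \in E \times N$. After shrinking $X_x$ if necessary, this exhibits $F(X_x)$ as a submanifold of $Y$ modelled on $E$ and shows that $F|_{X_x}$ is a $\mathcal{C}^p$-isomorphism onto it, completing the proof. The main obstacle, and the precise reason the hypothesis $\mathrm{T}_x F(\mathbb{T}_x(X)) \in \mathbb{G}(\mathbb{T}_{F(x)}(Y))$ cannot be weakened to mere injectivity with closed image, is the need for a closed complement $N$ in order to make sense of $G$ on a product and invoke the inverse function theorem. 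In the Hilbert or finite-dimensional setting this is automatic, but in a general Banach space, complementedness of a closed subspace is a genuine restriction, as already emphasised in Proposition \ref{characterize_P}.
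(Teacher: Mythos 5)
Your argument is correct, but note that the paper does not prove this proposition at all: it is quoted as a known result, with a pointer to Theorem~3.5.7 of \cite{MRA}. What you have written is essentially the standard proof given there (and in Lang's treatment of Banach manifolds): reduce to charts, use the splitting $E'=M\oplus N$ to build the auxiliary map $G(u,n)=\tilde F(u)+n$, and apply the Banach inverse function theorem to straighten $F$ into the canonical immersion $e\mapsto(e,0)$. The only step you pass over silently is that the claim ``$(h,k)\mapsto D\tilde F(\varphi(x))h+k$ is a toplinear isomorphism'' requires the open mapping theorem: injectivity plus the hypothesis $M\in\mathbb{G}(E')$ gives that $M$ is closed, hence a Banach space, so the bounded bijection $D\tilde F(\varphi(x)):E\rightarrow M$ automatically has a bounded inverse; without closedness of the image this would fail. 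Your closing remark correctly identifies why the splitting hypothesis is the genuinely Banach-space-theoretic ingredient, which is precisely the point the paper later has to work for in Theorem~\ref{closed_linear_subspace} when verifying that the tangent space $\mathbf{Z}^{(D)}(\mathbf{v})$ belongs to $\mathbb{G}(\mathbf{V}_{D_{\Vert\cdot\Vert_D}})$.
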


A concept related to an immersion between Banach manifolds is introduced in the
following definition.

\begin{definition}
Assume that $X$ and $Y$ are Banach manifolds and let $f:X\longrightarrow Y$
be a $\mathcal{C}^{r}$ morphism. If $f$ is an injective immersion, then $%
f(X) $ is called an \emph{immersed submanifold of $Y$.}
\end{definition}

In consequence, to prove that the standard inclusion map $\mathfrak{i}$ is
an immersion we shall prove, under the appropriate conditions, that if $%
\mathfrak{i}$ is a differentiable morphism then for each $\mathbf{v} \in 
\mathfrak{M}_{\mathfrak{r}}(\mathbf{V}_{D})$ the linear map $\mathrm{T}_{%
\mathbf{v}}\mathfrak{i}$ is injective and $\mathrm{T}_{\mathbf{v}}\mathfrak{i%
}(\mathbb{T}_{\mathbf{v}}(\mathfrak{M}_{\mathfrak{r}}(\mathbf{V}_{D})))$
belongs to $\mathbb{G}(\mathbf{V}_{D_{\|\cdot\|_D}}).$

\subsection{The linear map $\mathrm{T}_{%
\mathbf{v}}\mathfrak{i}$ is injective}

To describe $\mathfrak{i}$ as a morphism, we proceed as follows. Given $%
\mathbf{v} = \sum_{\substack{ 1\leq i_{\alpha }\leq r_{\alpha }  \\ \alpha \in
D}}C_{(i_{\alpha })_{\alpha \in D}}^{(D)}\bigotimes_{\alpha \in D}
u_{i_{\alpha}}^{(\alpha)}\in \mathfrak{M}_{\mathfrak{r}}(\mathbf{V}_{D}),$ we consider $%
\mathcal{U}(\mathbf{v}),$ a neighbourhood of $\mathbf{v},$ and
\begin{equation*}
(\mathfrak{i}\circ \xi_{\mathbf{v}}^{-1}):%
\mathop{\mathchoice{\raise-0.22em\hbox{\huge $\times$}} {\raise-0.05em\hbox{\Large $\times$}}{\hbox{\large
$\times$}}{\times}}_{\alpha \in D}\mathcal{L}(U_{\alpha
}^{\min }(\mathbf{v}),W_{\alpha }^{\min }(\mathbf{v}))\times \mathbb{R}%
_{\ast }^{\mathfrak{r}}\rightarrow \mathbf{V}_{\Vert \cdot \Vert _{D}}.
\end{equation*}%
From the proof of Lemma \ref{premanifold} the map $(\mathfrak{i}\circ \xi_{\mathbf{v}}^{-1})$ 
is given by 
\begin{equation*}
(\mathfrak{i}\circ \xi_{\mathbf{v}}^{-1})\left( \mathfrak{L},E^{(D)}\right) = 
\mathrm{eval}\left(\bigotimes_{\alpha \in D}(id_{\alpha}+L_{\alpha}), \mathbf{u}(E^{(D)})\right)=
\sum_{\substack{ 1\leq i_{\alpha }\leq r_{\alpha }  \\ \alpha \in
D}}E_{(i_{\alpha })_{\alpha \in D}}^{(D)}\bigotimes_{\alpha \in D}(id_{\alpha}+L_{\alpha})(
u_{i_{\alpha}}^{(\alpha)}).
\end{equation*}%

\begin{remark}
Observe that it allows us to define a left local action of the Lie group $\mathop{\mathchoice{\raise-0.22em\hbox{\huge $\times$}} {\raise-0.05em\hbox{\Large $\times$}}{\hbox{\large
$\times$}}{\times}}_{\alpha \in D}\mathrm{GL}(\mathcal{L}(U_{\alpha
}^{\min }(\mathbf{v}),W_{\alpha }^{\min }(\mathbf{v})))$ onto the local manifold
$\mathcal{U}(\mathbf{v})$ as follows:
$$
\mathop{\mathchoice{\raise-0.22em\hbox{\huge $\times$}} {\raise-0.05em\hbox{\Large $\times$}}{\hbox{\large
$\times$}}{\times}}_{\alpha \in D}\mathrm{GL}(\mathcal{L}(U_{\alpha
}^{\min }(\mathbf{v}),W_{\alpha }^{\min }(\mathbf{v}))) \times \mathcal{U}(\mathbf{v})
\rightarrow \mathcal{U}(\mathbf{v}), \quad \left( (\exp(L_{\alpha}))_{\alpha \in D}, \mathbf{w} 
\right) \mapsto \bigotimes_{\alpha \in D}\exp(L_{\alpha})(\mathbf{w}).
$$
Moreover, we can also define a right local action using the Lie group
$
\mathop{\mathchoice{\raise-0.22em\hbox{\huge $\times$}} {\raise-0.05em\hbox{\Large $\times$}}{\hbox{\large$\times$}}{\times}}_{\alpha \in D}\mathrm{GL}(U_{\alpha
}^{\min }(\mathbf{v}))
$
by  
$$
\mathcal{U}(\mathbf{v}) \times \left( \mathop{\mathchoice{\raise-0.22em\hbox{\huge $\times$}} {\raise-0.05em\hbox{\Large $\times$}}{\hbox{\large$\times$}}{\times}}_{\alpha \in D}\mathrm{GL}(U_{\alpha
}^{\min }(\mathbf{v}))  \right) \rightarrow  \mathcal{U}(\mathbf{v}),
\quad \left(\bigotimes_{\alpha \in D}\exp(L_{\alpha})(\mathbf{u}(E^{(D)})), (G_{\alpha})_{\alpha \in D}
\right) \mapsto \bigotimes_{\alpha \in D}(\exp(L_{\alpha}) \circ G_{\alpha})(\mathbf{u}(E^{(D)})).
$$
\end{remark}

\bigskip

The next lemma describes the tangent map $\mathrm{T}_{\mathbf{v}}\mathfrak{i}.$

\begin{proposition}
\label{characterization_tangent_map} Assume that
$(V_{\alpha},\|\cdot\|_{\alpha})$ is a normed space 
for each $\alpha \in D$ 
and let $\|\cdot\|_D$ be a norm 
on the tensor space $\mathbf{V}_D = \left.
_{a}\bigotimes_{\alpha \in D}V_{\alpha}\right.$ such that the tensor
product map (\ref{bigotimes}) is continuous. For $\mathbf{v} = \sum_{\substack{1 \le i_{\alpha} \le r_{\alpha} \\ \alpha \in D}} C^{(D)}_{(i_{\alpha})_{\alpha \in D}} \bigotimes_{\alpha \in D} u^{(\alpha)}_{i_{\alpha}}  \in \mathfrak{M}_{\mathfrak{r%
}}(\mathbf{V}_{D})$ the following statements hold.

\begin{itemize}
\item[(a)] The map $(\mathfrak{i} \circ \xi_{\mathbf{v}}^{-1})$ from $%
\mathop{\mathchoice{\raise-0.22em\hbox{\huge $\times$}} {\raise-0.05em\hbox{\Large $\times$}}{\hbox{\large
$\times$}}{\times}}_{\alpha \in D}\mathcal{L}(U_{\alpha
}^{\min }(\mathbf{v}),W_{\alpha }^{\min }(\mathbf{v})) \times \mathbb{R}^{
\mathop{\mathchoice{\raise-0.22em\hbox{\huge $\times$}}
{\raise-0.05em\hbox{\Large $\times$}}{\hbox{\large
$\times$}}{\times}}_{\alpha \in D}r_{\alpha}}$ to $\mathbf{V}_{D_{\|\cdot\|_D}}$ is Fr\'echet
differentiable, and hence 
\begin{equation*}
\mathrm{T}_{\mathbf{v}}\mathfrak{i} \in \mathcal{L}\left(\mathbb{T}_{\mathbf{%
v}}(\mathfrak{M}_{\mathfrak{r}}(\mathbf{V}_{D})), \mathbf{V}%
_{D_{\|\cdot\|_D}} \right).
\end{equation*}

\item[(b)] Assume $(\dot{\mathfrak{L}},\dot{C}^{(D)})\in \mathbb{T}_{%
\mathbf{v}}(\mathfrak{M}_{\mathfrak{r}}(\mathbf{V}_{D})),$ where $\dot{%
C}^{(D)}\in \mathbb{R}^{
\mathop{\mathchoice{\raise-0.22em\hbox{\huge $\times$}}
{\raise-0.05em\hbox{\Large $\times$}}{\hbox{\large
$\times$}}{\times}}_{\alpha \in D}r_{\alpha}}$ and $\dot{\mathfrak{L}}=(\dot{L}%
_{\alpha })_{\alpha \in D}$ is in $ 
\mathop{\mathchoice{\raise-0.22em\hbox{\huge $\times$}} {\raise-0.05em\hbox{\Large $\times$}}{\hbox{\large
$\times$}}{\times}}_{\alpha \in D}\mathcal{L}(U_{\alpha
}^{\min }(\mathbf{v}),W_{\alpha }^{\min }(\mathbf{v})).$ Then $\dot{\mathbf{w%
}}=\mathrm{T}_{\mathbf{v}}\mathfrak{i}(\dot{\mathfrak{L}},\dot{C}^{(D)}%
) $ if and only if
\begin{equation}
\dot{\mathbf{w}}=\sum_{\substack{ 1\leq i_{\alpha }\leq r_{\alpha }  \\ %
\alpha \in D}}\dot{C}_{(i_{\alpha })_{\alpha \in
D}}^{(D)}\bigotimes_{\alpha \in D}u_{i_{\alpha }}^{(\alpha
)}+\sum_{\substack{1 \le i_{\alpha} \le r_{\alpha} \\ \alpha \in D}}
\left( \dot{u}_{i_{\alpha }}^{(\alpha )}\otimes \mathbf{U}%
_{i_{\alpha }}^{(\alpha )}\right) ,  \label{kinematic1}
\end{equation}%
where 
$$
\mathbf{U}_{i_{\alpha }}^{(\alpha )}=\sum_{\substack{ 1\leq i_{\beta }\leq
r_{\beta }  \\ \beta \in D \setminus \{\alpha\}}}C_{i_{\alpha},(i_{\beta })_{\beta
\in D\setminus \{\alpha\}}}^{(D)}\bigotimes_{\beta \in D}u_{i_{\beta }}^{(\beta
)}.
$$
\end{itemize}
\end{proposition}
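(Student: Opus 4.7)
The plan is to establish (a) by recognising $\mathfrak{i} \circ \xi_{\mathbf{v}}^{-1}$ as a composition of $\mathcal{C}^{\infty}$-Fr\'echet differentiable maps already identified in the paper, and then to derive the explicit formula in (b) by a direct chain-rule computation at the base point $\xi_{\mathbf{v}}(\mathbf{v}) = (\mathbf{0}, C^{(D)})$, where the relevant exponentials reduce to identities.

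For (a), I would factor $\mathfrak{i} \circ \xi_{\mathbf{v}}^{-1}$ into four $\mathcal{C}^{\infty}$ pieces: (i) the map $\mathfrak{L} \mapsto (\exp(L_{\alpha}))_{\alpha \in D}$, analytic by Example \ref{local_lee_group}; (ii) the tensor product $(G_{\alpha})_{\alpha \in D} \mapsto \bigotimes_{\alpha \in D} G_{\alpha}|_{U_{\alpha}^{\min}(\mathbf{v})}$ with values in $\mathcal{L}\bigl(\left._{a}\bigotimes_{\alpha \in D} U_{\alpha}^{\min}(\mathbf{v})\right., \mathbf{V}_D\bigr)$, which is continuous and therefore $\mathcal{C}^{\infty}$ by Proposition \ref{linear_bigotimes}; (iii) the linear map $E^{(D)} \mapsto \mathbf{u}(E^{(D)})$, continuous into the finite-dimensional space $\left._{a}\bigotimes_{\alpha \in D} U_{\alpha}^{\min}(\mathbf{v})\right.$; and (iv) the continuous bilinear evaluation map, $\mathcal{C}^{\infty}$ by Proposition \ref{multilinear}. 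Composing with the continuous inclusion into $\mathbf{V}_{D_{\Vert \cdot \Vert_D}}$ preserves differentiability, so $\mathrm{T}_{\mathbf{v}}\mathfrak{i}$ lies in $\mathcal{L}\bigl(\mathbb{T}_{\mathbf{v}}(\mathfrak{M}_{\mathfrak{r}}(\mathbf{V}_D)), \mathbf{V}_{D_{\Vert \cdot \Vert_D}}\bigr)$.

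For (b), I would compute $\mathrm{T}_{\mathbf{v}}\mathfrak{i}(\dot{\mathfrak{L}}, \dot{C}^{(D)})$ as the derivative at $t = 0$ of the curve
$$
t \mapsto \sum_{\substack{1 \le i_{\alpha} \le r_{\alpha} \\ \alpha \in D}} \bigl(C^{(D)} + t \dot{C}^{(D)}\bigr)_{(i_{\alpha})_{\alpha \in D}} \bigotimes_{\alpha \in D} \bigl(id_{\alpha} + t \dot{L}_{\alpha}\bigr)\bigl(u_{i_{\alpha}}^{(\alpha)}\bigr),
$$
where Proposition \ref{niceinverse} gives $\dot{L}_{\alpha}^{2} = 0$ so that $\exp(t\dot{L}_{\alpha}) = id_{\alpha} + t\dot{L}_{\alpha}$ exactly. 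Proposition \ref{Diff_otimes} then splits the derivative of $\bigotimes_{\alpha \in D}$ into a sum of $d$ terms in which exactly one factor is replaced by its variation, and separating the coefficient contribution from the basis contribution yields, upon setting $\dot{u}_{i_{\alpha}}^{(\alpha)} := \dot{L}_{\alpha}(u_{i_{\alpha}}^{(\alpha)})$,
$$
\dot{\mathbf{w}} = \sum_{\substack{1 \le i_{\alpha} \le r_{\alpha} \\ \alpha \in D}} \dot{C}^{(D)}_{(i_{\alpha})_{\alpha \in D}} \bigotimes_{\alpha \in D} u_{i_{\alpha}}^{(\alpha)} + \sum_{\beta \in D} \sum_{i_{\beta} = 1}^{r_{\beta}} \dot{u}_{i_{\beta}}^{(\beta)} \otimes \sum_{\substack{1 \le i_{\gamma} \le r_{\gamma} \\ \gamma \neq \beta}} C^{(D)}_{i_{\beta},(i_{\gamma})_{\gamma \neq \beta}} \bigotimes_{\gamma \neq \beta} u_{i_{\gamma}}^{(\gamma)}.
$$
Recognising the innermost sum as $\mathbf{U}_{i_{\beta}}^{(\beta)}$ via the identification $\Phi_{\beta}$ yields precisely (\ref{kinematic1}).

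The main obstacle is bookkeeping rather than analysis: one must consistently invoke the identification $\mathcal{L}(U_{\alpha}^{\min}(\mathbf{v}), W_{\alpha}^{\min}(\mathbf{v})) \cong \mathcal{L}_{(U_{\alpha}^{\min}(\mathbf{v}), W_{\alpha}^{\min}(\mathbf{v}))}(V_{\alpha_{\Vert \cdot \Vert_{\alpha}}}, V_{\alpha_{\Vert \cdot \Vert_{\alpha}}})$ from Proposition \ref{niceinverse} to apply $\exp$ and exploit nilpotency, and one must re-index the tensor-derivative sum so that the asymmetric role of the distinguished factor, as encoded in the definition of $\mathbf{U}_{i_{\alpha}}^{(\alpha)}$, becomes visible.
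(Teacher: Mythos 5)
Your proposal is correct and follows essentially the same route as the paper: part (a) is obtained by composing the $\mathcal{C}^{\infty}$ factors already established in Section \ref{manifold_structure} (the paper simply cites that section), and part (b) is the same chain-rule computation at the base point $\xi_{\mathbf{v}}(\mathbf{v})=(\mathfrak{0},C^{(D)})$, your curve-differentiation at $t=0$ being equivalent to the paper's direct evaluation of the Fr\'echet derivative since differentiability is already known from (a). The nilpotency observation $\exp(t\dot{L}_{\alpha})=id_{\alpha}+t\dot{L}_{\alpha}$ and the re-indexing into the $\mathbf{U}_{i_{\alpha}}^{(\alpha)}$ terms match the paper's argument exactly.
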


\begin{proof}
To prove statement (a), from the results of section \ref{manifold_structure} we know that 
$(\mathfrak{i}\circ \xi_{\mathbf{v}}^{-1})$ is $\mathcal{C}^{\infty}$-Fr\'{e}chet differentiable and 
that $\xi_{\mathbf{v}}(\mathbf{v%
}) = (\mathfrak{0},C^{(D)}).$ Now, to prove (b) observe that  
\begin{equation*}
\mathrm{T}_{\mathbf{v}}\mathfrak{i}:%
\mathop{\mathchoice{\raise-0.22em\hbox{\huge $\times$}} {\raise-0.05em\hbox{\Large $\times$}}{\hbox{\large
$\times$}}{\times}}_{\alpha \in D}\mathcal{L}(U_{\alpha
}^{\min }(\mathbf{v}),W_{\alpha }^{\min }(\mathbf{v})) \times \mathbb{R}^{
\mathop{\mathchoice{\raise-0.22em\hbox{\huge $\times$}}
{\raise-0.05em\hbox{\Large $\times$}}{\hbox{\large
$\times$}}{\times}}_{\alpha \in D}r_{\alpha}}\longrightarrow \mathbf{V}_{\Vert \cdot \Vert _{D}}
\end{equation*}
is given by the chain rule:
\begin{align*}
\mathrm{T}_{\mathbf{v}}\mathfrak{i}(\dot{\mathfrak{L}},\dot{C}^{(D)})& =[(%
\mathfrak{i}\circ \xi_{\mathbf{v}}^{-1})^{\prime }((\mathfrak{i}\circ \xi_{\mathbf{v}}^{-1})(\mathbf{v}))](\dot{\mathfrak{L}},\dot{C}^{(D)}) \\
& = [(%
\mathfrak{i}\circ \xi_{\mathbf{v}}^{-1})^{\prime }(\mathfrak{0},C^{(D)})](\dot{\mathfrak{L}},\dot{%
C}^{(D)}) \\
& =  \sum_{\substack{ 1\leq i_{\alpha }\leq r_{\alpha }  \\ \alpha \in
D}}\dot{C}_{(i_{\alpha })_{\alpha \in D}}^{(D)}\bigotimes_{\alpha \in D}
u_{i_{\alpha}}^{(\alpha)} 
+ 
\sum_{\substack{1 \le i_{\alpha} \le r_{\alpha} \\ \alpha \in D}}\sum_{\substack{ \substack{1\leq i_{\beta }\leq
r_{\beta } \\ \beta \in D \setminus \{\alpha\}}}}C^{(D)}_{i_{\alpha},(i_{\beta})_{\beta \in D \setminus \{\alpha\}}}
\left( \dot{L}_{\alpha}(u_{i_{\alpha}}^{\alpha}) \otimes 
\bigotimes_{\substack{\beta \in D\\ \beta \neq \alpha }}u_{i_{\beta }}^{(\beta
)}
\right) \\
& = \sum_{\substack{ 1\leq i_{\alpha }\leq r_{\alpha }  \\ \alpha \in
D}}\dot{C}_{(i_{\alpha })_{\alpha \in D}}^{(D)}\bigotimes_{\alpha \in D}
u_{i_{\alpha}}^{(\alpha)} + \sum_{\substack{1 \le i_{\alpha} \le r_{\alpha} \\ \alpha \in D}}
\left( \dot{L}_{\alpha}(u_{i_{\alpha}}^{\alpha}) \otimes \mathbf{U}%
_{i_{\alpha }}^{(\alpha )}\right).
\end{align*}%
This implies statement (b).
\end{proof}

\bigskip

In the next proposition we prove that $\mathrm{T}_{\mathbf{v}}\mathfrak{i}$ is
injective when we consider $\mathbf{v}$ in the manifold $\mathfrak{M}_{%
\mathbf{r}}(\mathbf{V}_{D}).$ It allows us to characterise the tangent space
of $\mathfrak{M}_{%
\mathbf{r}}(\mathbf{V}_{D})$ inside the tensor space $\mathbf{V}_{D_{\Vert \cdot \Vert
_{D}}}.$  We recall that from Remark \ref{identification} we have 
\begin{equation*}
U_{D\setminus \{\alpha \}}^{\min }(\mathbf{v})=%
\mathrm{span}\,\{\mathbf{U}_{i_{\alpha }}^{(\alpha )}:1\leq i_{\alpha }\leq
r_{\alpha }\},
\end{equation*}
for $\alpha \in D.$ In order to simplify notations we introduce the following definition.
For each $\mathbf{v} \in \mathfrak{M}_{\mathfrak{r}}(\mathbf{V}_{D})$ we 
denote by $\mathbf{Z}^{(D)}(\mathbf{v})$ the linear
subspace in $\mathbf{V}_{D_{\|\cdot\|_D}}$ defined by
\begin{equation*}
\mathbf{Z}^{(D)}(\mathbf{v}):=\left. _{a}\bigotimes_{\alpha \in
D}U_{\alpha }^{\min }(\mathbf{v})\right. \oplus \left( \bigoplus_{\alpha
\in D}W_{\alpha }^{\min }(\mathbf{v})\otimes _{a}U_{D\setminus
\{\alpha \}}^{\min }(\mathbf{v})\right) .
\end{equation*}%

\begin{proposition}
\label{rank_one_tangent_space} Assume that
$(V_{\alpha},\|\cdot\|_{\alpha})$ is a normed space 
for each $\alpha \in D$ 
and let $\|\cdot\|_D$ be a norm 
on the tensor space $\mathbf{V}_D = \left.
_{a}\bigotimes_{\alpha \in D}V_{\alpha}\right.$ such that the tensor
product map (\ref{bigotimes}) is continuous. Let $\mathbf{v}\in 
\mathfrak{M}_{\mathfrak{r}}(\mathbf{V}_{D}),$ then the linear map $\mathrm{T}_{%
\mathbf{v}}\mathfrak{i}$ is injective and 
$
\mathrm{T}_{\mathbf{v}}\mathfrak{i}(\mathbb{T}_{\mathbf{v}}(\mathfrak{M}_{%
\mathbf{r}}(\mathbf{V}_{D}))) = \mathbf{Z}^{(D)}(\mathbf{v})
$
is linearly isomorphic to $\mathbb{T}_{\mathbf{v}}(\mathfrak{M}_{\mathfrak{r}}(%
\mathbf{V}_{D})).$
\end{proposition}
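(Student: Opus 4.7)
The strategy is to exploit the explicit formula for $\mathrm{T}_{\mathbf{v}}\mathfrak{i}(\dot{\mathfrak{L}},\dot{C}^{(D)})$ given in Proposition~\ref{characterization_tangent_map}(b), and to combine it with the algebraic direct sum decomposition of $\mathbf{V}_D$ induced by $V_{\alpha_{\|\cdot\|_{\alpha}}} = U_\alpha^{\min}(\mathbf{v}) \oplus W_\alpha^{\min}(\mathbf{v})$ for each $\alpha \in D$.

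First I would check the easy inclusion $\mathrm{T}_{\mathbf{v}}\mathfrak{i}(\mathbb{T}_{\mathbf{v}}(\mathfrak{M}_{\mathfrak{r}}(\mathbf{V}_D))) \subset \mathbf{Z}^{(D)}(\mathbf{v})$. The first summand in \eqref{kinematic1} manifestly lies in $\left._{a}\bigotimes_{\alpha\in D}U_\alpha^{\min}(\mathbf{v})\right.$; for each fixed $\alpha \in D$ the $\alpha$-th term of the second summand, $\sum_{i_\alpha}\dot{L}_\alpha(u_{i_\alpha}^{(\alpha)})\otimes \mathbf{U}_{i_\alpha}^{(\alpha)}$, lies in $W_\alpha^{\min}(\mathbf{v})\otimes_a U_{D\setminus\{\alpha\}}^{\min}(\mathbf{v})$ because $\dot{L}_\alpha(u_{i_\alpha}^{(\alpha)})\in W_\alpha^{\min}(\mathbf{v})$ and, by Remark~\ref{identification}, $\{\mathbf{U}_{i_\alpha}^{(\alpha)}\}_{i_\alpha=1}^{r_\alpha}$ is a basis of $U_{D\setminus\{\alpha\}}^{\min}(\mathbf{v})$.

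The main obstacle is to verify that the sum in the definition of $\mathbf{Z}^{(D)}(\mathbf{v})$ is genuinely a direct sum. For this I would introduce, for every label $s:D\to \{U,W\}$, the algebraic tensor subspace
\[
X_s := \left._{a}\bigotimes_{\alpha\in D}X_{\alpha,s(\alpha)}\right.,\qquad X_{\alpha,U}:=U_\alpha^{\min}(\mathbf{v}),\ X_{\alpha,W}:=W_\alpha^{\min}(\mathbf{v}),
\]
and observe that, by the standard fact that tensor products distribute over (algebraic) direct sums, the family $\{X_s\}_{s\in \{U,W\}^D}$ is an internal direct sum inside $\left._{a}\bigotimes_{\alpha\in D}V_{\alpha_{\|\cdot\|_{\alpha}}}\right.$. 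Now $\left._{a}\bigotimes_{\alpha\in D}U_\alpha^{\min}(\mathbf{v})\right. = X_{(U,\ldots,U)}$, while for each $\alpha \in D$ we have $W_\alpha^{\min}(\mathbf{v})\otimes_a U_{D\setminus\{\alpha\}}^{\min}(\mathbf{v}) \subset X_{s^{(\alpha)}}$, where $s^{(\alpha)}$ sends $\alpha$ to $W$ and every other index to $U$. The labels $(U,\ldots,U)$ and $s^{(\alpha)}$ for $\alpha \in D$ are pairwise distinct, so the corresponding summands of $\mathbf{Z}^{(D)}(\mathbf{v})$ are in direct sum.

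With the direct sum property established, injectivity and the characterisation of the image follow readily. If $\mathrm{T}_{\mathbf{v}}\mathfrak{i}(\dot{\mathfrak{L}},\dot{C}^{(D)})=0$, projecting to $X_{(U,\ldots,U)}$ yields $\sum \dot{C}_{(i_\alpha)_{\alpha\in D}}^{(D)}\bigotimes_{\alpha\in D}u_{i_\alpha}^{(\alpha)}=0$, hence $\dot{C}^{(D)}=0$ by linear independence of the tensorised basis; projecting to $X_{s^{(\alpha)}}$ yields $\sum_{i_\alpha}\dot{L}_\alpha(u_{i_\alpha}^{(\alpha)})\otimes \mathbf{U}_{i_\alpha}^{(\alpha)}=0$, and since $\{\mathbf{U}_{i_\alpha}^{(\alpha)}\}_{i_\alpha}$ is linearly independent, $\dot{L}_\alpha(u_{i_\alpha}^{(\alpha)})=0$ for every $i_\alpha$, forcing $\dot{L}_\alpha=0$. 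Conversely, given any $\mathbf{z}\in \mathbf{Z}^{(D)}(\mathbf{v})$, decompose it according to its direct summands: the component in $\left._{a}\bigotimes_{\alpha\in D}U_\alpha^{\min}(\mathbf{v})\right.$ expands uniquely in the basis $\{\bigotimes_\alpha u_{i_\alpha}^{(\alpha)}\}$, delivering $\dot{C}^{(D)}$, and the component in $W_\alpha^{\min}(\mathbf{v})\otimes_a U_{D\setminus\{\alpha\}}^{\min}(\mathbf{v})$ expands uniquely in the basis $\{\mathbf{U}_{i_\alpha}^{(\alpha)}\}$, prescribing the values $\dot{L}_\alpha(u_{i_\alpha}^{(\alpha)})\in W_\alpha^{\min}(\mathbf{v})$ that define the linear map $\dot{L}_\alpha \in \mathcal{L}(U_\alpha^{\min}(\mathbf{v}),W_\alpha^{\min}(\mathbf{v}))$. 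The map $\mathbf{z}\mapsto (\dot{\mathfrak{L}},\dot{C}^{(D)})$ so obtained is clearly the linear inverse of $\mathrm{T}_{\mathbf{v}}\mathfrak{i}$ restricted to its image, proving both that $\mathrm{T}_{\mathbf{v}}\mathfrak{i}(\mathbb{T}_{\mathbf{v}}(\mathfrak{M}_{\mathfrak{r}}(\mathbf{V}_D)))=\mathbf{Z}^{(D)}(\mathbf{v})$ and that this image is linearly isomorphic to $\mathbb{T}_{\mathbf{v}}(\mathfrak{M}_{\mathfrak{r}}(\mathbf{V}_D))$.
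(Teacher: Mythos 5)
Your proof is correct and follows the same overall route as the paper's: both arguments use the explicit formula from Proposition~\ref{characterization_tangent_map}(b), verify the two inclusions, and read off injectivity from the linear independence of $\{\bigotimes_{\alpha}u_{i_{\alpha}}^{(\alpha)}\}$ and of $\{\mathbf{U}_{i_{\alpha}}^{(\alpha)}\}_{i_\alpha}$. The one substantive difference is that you explicitly justify why the sum defining $\mathbf{Z}^{(D)}(\mathbf{v})$ is direct, by introducing the $2^{d}$-fold decomposition of $\left._{a}\bigotimes_{\alpha\in D}V_{\alpha_{\Vert\cdot\Vert_{\alpha}}}\right.$ into the subspaces $X_{s}$ indexed by labels $s\in\{U,W\}^{D}$ (distributivity of the algebraic tensor product over direct sums) and locating $\left._{a}\bigotimes_{\alpha\in D}U_{\alpha}^{\min}(\mathbf{v})\right.$ and each $W_{\alpha}^{\min}(\mathbf{v})\otimes_{a}U_{D\setminus\{\alpha\}}^{\min}(\mathbf{v})$ in distinct components. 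The paper takes this directness for granted: in its injectivity argument it passes from the vanishing of the total sum to the vanishing of each of the $d+1$ pieces without comment, which is exactly the step your projections onto the $X_{s}$ legitimise (note that $U_{D\setminus\{\alpha\}}^{\min}(\mathbf{v})\subset\left._{a}\bigotimes_{\beta\neq\alpha}U_{\beta}^{\min}(\mathbf{v})\right.$ by \eqref{F2}, so your placement of each summand is right). Your version is therefore slightly more self-contained; the paper's is shorter because it treats the $\oplus$ in the definition of $\mathbf{Z}^{(D)}(\mathbf{v})$ as already established. Both arguments are purely algebraic and make no use of the norm beyond what is needed for $\mathrm{T}_{\mathbf{v}}\mathfrak{i}$ to be defined.
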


\begin{proof}
First, observe that if $\mathbf{v}\in \mathfrak{M}_{\mathfrak{r}}(\mathbf{V}%
_{D})$ and $\dot{\mathbf{w}}=\mathrm{T}_{\mathbf{v}}\mathfrak{i}(\dot{\mathfrak{L}},\dot{%
C}^{(D)}),$ then by Proposition~\ref%
{characterization_tangent_map}(b)%
\begin{equation*}
\dot{\mathbf{w}}=\sum_{\substack{ 1\leq i_{\alpha }\leq r_{\alpha }  \\ %
\alpha \in D}}\dot{C}_{(i_{\alpha })_{\alpha \in
D}}^{(D)}\bigotimes_{\alpha \in D}u_{i_{\alpha }}^{(\alpha
)}+\sum_{\substack{1 \le i_{\alpha} \le r_{\alpha} \\ \alpha \in D}}
\left( \dot{u}_{i_{\alpha }}^{(\alpha )}\otimes \mathbf{U}%
_{i_{\alpha }}^{(\alpha )}\right) ,
\end{equation*}%
where 
\begin{equation*}
\mathbf{U}_{i_{\alpha }}^{(\alpha )}=\sum_{\substack{ 1\leq i_{\beta }\leq
r_{\beta }  \\ \beta \in D\setminus \{\alpha\}}}C_{(i_{\beta })_{\beta
\in D}}^{(D)}\bigotimes_{\beta \in D}u_{i_{\beta }}^{(\beta
)}\in U_{D\setminus \{\alpha \}}^{\min }(\mathbf{v}),
\end{equation*}%
and $\dot{u}_{i_{\alpha }}^{(\alpha )}=\dot{L}_{\alpha}(u%
_{i_{\alpha }}^{(\alpha )})\in W_{\alpha }^{\min }(\mathbf{v})$ for all $%
\alpha \in D.$ Hence $\mathrm{T}_{\mathbf{v}}\mathfrak{i}(%
\mathbb{T}_{\mathbf{v}}(\mathfrak{M}_{\mathfrak{r}}(\mathbf{V}_{D})))\subset 
\mathbf{Z}^{(D)}(\mathbf{v}).$ 
Next, we claim that $\mathbf{Z}^{(D)}(\mathbf{v})\subset \mathrm{T}_{\mathbf{%
v}}\mathfrak{i}(\mathbb{T}_{\mathbf{v}}(\mathfrak{M}_{\mathfrak{r}}(\mathbf{V}%
_{D}))).$ To prove the claim take $\mathbf{w}\in \mathbf{Z}^{(D)}(\mathbf{v}%
).$ Then we can write 
\begin{equation*}
\mathbf{w}=\sum_{\substack{ 1\leq i_{\alpha }\leq r_{\alpha }  \\ \alpha \in
D}}(\dot{C}^{(D)})_{(i_{\alpha })_{\alpha \in D}}\bigotimes_{\alpha
\in D}u_{i_{\alpha }}^{(\alpha )}+\sum_{\substack{1 \le i_{\alpha} \le r_{\alpha} \\ \alpha \in D}}\left( w_{i_{\alpha
}}^{(\alpha )}\otimes \mathbf{U}_{i_{\alpha }}^{(\alpha )}\right) ,
\end{equation*}%
where $w_{i_{\alpha }}^{(\alpha )} \in W_{\alpha }^{\min }(\mathbf{v})$
for $1\leq i_{\alpha }\leq r_{\alpha }$ and $\alpha \in D.$ 
Now, define $\dot{L}_{\alpha }\in \mathcal{L}(U_{\alpha }^{\min }(\mathbf{v}%
),W_{\alpha }^{\min }(\mathbf{v}))$ by $\dot{L}_{\alpha }(u%
_{i_{\alpha }}^{(\alpha )}):=w_{i_{\alpha }}^{(\alpha )}$ for $%
1\leq i_{\alpha }\leq r_{\alpha }$ and $\alpha \in D.$ Then the claim
follows from $\mathbf{w}=\mathrm{T}_{\mathbf{v}}\mathfrak{i}((\dot{L}%
_{\alpha })_{\alpha \in D},\dot{C}^{(D)}).$ To conclude the proof of the
proposition we need to show that the map $\mathrm{T}_{\mathbf{v}}\mathfrak{i}
$ is an injective linear operator. To prove this consider that 
\begin{equation*}
\mathrm{T}_{\mathbf{v}}\mathfrak{i}\left( (\dot{L}_{\beta })_{\beta \in 
D},\dot{C}^{(D)}\right) =\mathbf{0},
\end{equation*}%
that is, 
\begin{equation*}
\mathbf{0}=\sum_{\substack{ 1\leq i_{\alpha }\leq r_{\alpha }  \\ \alpha \in
D}}(\dot{C}^{(D)})_{(i_{\alpha })_{\alpha \in D}}\bigotimes_{\alpha
\in D}u_{i_{\alpha }}^{(\alpha )}+\sum_{\substack{1 \le i_{\alpha} \le r_{\alpha} \\ \alpha \in D}}\sum 
_{\substack{ 1\leq i_{\alpha }\leq r_{\alpha }}}\left( \dot{u}%
_{i_{\alpha }}^{(\alpha )}\otimes \mathbf{U}_{i_{\alpha }}^{(\alpha
)}\right)
\end{equation*}%
with $\dot{u}_{i_{\alpha }}^{(\alpha )}=\dot{L}_{\alpha}(u%
_{i_{\alpha }}^{(\alpha )}).$
Thus, 
\begin{align*}
\sum_{\substack{ 1\leq i_{\alpha }\leq r_{\alpha }  \\ \alpha \in D}}(%
\dot{C}^{(D)})_{(i_{\alpha })_{\alpha \in D}}\bigotimes_{\alpha \in D}%
u_{i_{\alpha }}^{(\alpha )}& =\mathbf{0}, \\
\sum_{\substack{ 1\leq i_{\alpha }\leq r_{\alpha }}}\left( \dot{u}%
_{i_{\alpha }}^{(\alpha )}\otimes \mathbf{U}_{i_{\alpha }}^{(\alpha
)}\right) & =\mathbf{0}\text{ for }\alpha \in D,
\end{align*}%
and hence $\dot{C}^{(D)}=\mathfrak{0},$ because $\left\{ \bigotimes_{\alpha
\in D}u_{i_{\alpha }}^{(\alpha )}\right\} $ is a basis of $%
\left. _{a}\bigotimes_{\alpha \in D}U_{\alpha }^{\min }(\mathbf{v}%
)\right. ,$ and $\dot{u}_{i_{\alpha }}^{(\alpha )}=0$ for $1\leq
i_{\alpha }\leq r_{\alpha },$ because the $\{\mathbf{U}_{i_{\alpha
}}^{(\alpha )}:1\leq i_{\alpha }\leq r_{\alpha }\}$ are linearly independent
for $\alpha \in D.$ Then $\dot{L}_{\alpha }=0$ for all $\alpha \in D$.
We conclude that

\begin{equation*}
\left( (\dot{L}_{\beta })_{\beta \in D},\dot{C}%
^{(D)}\right) =((0)_{\beta \in D},\mathfrak{0})
\end{equation*}%
and, in consequence, $\mathrm{T}_{\mathbf{v}}\mathfrak{i}$ is injective.
\end{proof}

\subsection{The linear subspace $\mathrm{T}_{\mathbf{v}}\mathfrak{i%
}(\mathbb{T}_{\mathbf{v}}(\mathfrak{M}_{\mathfrak{r}}(\mathbf{V}_{D})))$
belongs to $\mathbb{G}(\mathbf{V}_{D_{\|\cdot\|_D}})$}

Finally, to show that $\mathfrak{i}$ is an immersion, and hence $\mathfrak{M}%
_{\mathfrak{r}}(\mathbf{V}_{D})$ is an immersed submanifold of $\mathbf{V}%
_{D_{\Vert \cdot \Vert _{D}}},$ by Proposition~\ref{rank_one_tangent_space}, 
we need to prove that $\mathbf{Z}^{(D)}(\mathbf{v}) 
\in \mathbb{G}(\mathbf{V}_{\Vert \cdot \Vert _{D}}).$
To reach it we need a slightly stronger condition than the continuity of 
the tensor product map. To this end we introduce the crossnorms.

\subsubsection{Crossnorms}

Let $\left\Vert \cdot\right\Vert _{\alpha},$ $\alpha \in D,$ be the norms of the
vector spaces $V_{\alpha}$ appearing in $\mathbf{V}_D=\left. _{a}\bigotimes
\nolimits_{\alpha \in D}V_{\alpha}\right. .$ By $\left\Vert \cdot\right\Vert $ we
denote the norm on the tensor space $\mathbf{V}_D$. Note that $%
\left\Vert\cdot\right\Vert $ is not determined by $\left\Vert
\cdot\right\Vert _{\alpha},$ for $\alpha \in D,$ but there are relations which are
`reasonable'. Any norm $\left\Vert \cdot\right\Vert $ on $\left.
_{a}\bigotimes_{\alpha \in D}V_{\alpha}\right. $ satisfying%
\begin{equation}
\Big{\|}%
\bigotimes\nolimits_{\alpha \in D}v_{\alpha}%
\Big{\|}%
=\prod\nolimits_{\alpha \in D}\Vert v_{\alpha}\Vert_{\alpha}\qquad\text{for all }v_{\alpha}\in V_{\alpha}%
\text{ }\left( \alpha \in D\right)  \label{(rcn a}
\end{equation}
is called a \emph{crossnorm}. As usual, the dual norm of $\left\Vert
\cdot\right\Vert $ is denoted by $\left\Vert \cdot\right\Vert ^{\ast}$. If $%
\left\Vert \cdot\right\Vert $ is a crossnorm and also $\left\Vert
\cdot\right\Vert ^{\ast}$ is a crossnorm on $\left.
_{a}\bigotimes_{\alpha \in D}V_{\alpha}^{\ast}\right. $, i.e.,%
\begin{equation}
\Big{\|}%
\bigotimes\nolimits_{\alpha \in D}\varphi^{(\alpha)}%
\Big{\|}%
^{\ast}=\prod\nolimits_{\alpha \in D}\Vert\varphi^{(\alpha)}\Vert_{\alpha}^{\ast}\qquad%
\text{for all }\varphi^{(\alpha)}\in V_{\alpha}^{\ast}\text{ }\left( \alpha \in D\right) ,  \label{(rcn b}
\end{equation}
then $\left\Vert \cdot\right\Vert $ is called a \emph{reasonable crossnorm}.

\begin{remark}
\label{tensor product continuity}Eq. \eqref{(rcn a} implies the inequality $%
\Vert\bigotimes\nolimits_{\alpha \in D}v_{\alpha}\Vert\lesssim\prod\nolimits_{\alpha \in D}%
\Vert v_{\alpha}\Vert_{\alpha}$ which is equivalent to the continuity of the multilinear
tensor product map (\ref{bigotimes}).
\end{remark}

Grothendieck \cite{Grothendiek1953} named the following norm $\left\Vert \cdot
\right\Vert _{\vee }$ the \emph{injective norm}.

\begin{definition}
Let $V_{\alpha}$ be a Banach space with norm $\left\Vert \cdot\right\Vert _{\alpha}$
for $\alpha\in D.$ Then for $\mathbf{v}\in\mathbf{V}=\left.
_{a}\bigotimes_{\alpha \in D}V_{\alpha}\right. $ define $\left\Vert \cdot\right\Vert
_{\vee(V_1,\ldots,V_d)}$ by%
\begin{equation}
\left\Vert \mathbf{v}\right\Vert _{\vee(V_1,\ldots,V_d)}:=\sup\left\{ \frac{%
\left\vert \left(
\varphi_{1}\otimes\varphi_{2}\otimes\ldots\otimes\varphi_{d}\right) (\mathbf{%
v})\right\vert }{\prod_{\alpha \in D}\Vert\varphi_{\alpha}\Vert_{\alpha}^{\ast}}%
:0\neq\varphi_{\alpha}\in V_{\alpha}^{\ast},\alpha \in D\right\} .
\label{(Norm ind*(V1,...,Vd)}
\end{equation}
\end{definition}

It is well known that the injective norm is a reasonable crossnorm (see
Lemma 1.6 in \cite{Light} and \eqref{(rcn a}-\eqref{(rcn b}). Further
properties are given by the next proposition (see Lemma 4.96 and Section 4.2.4 in 
\cite{Hackbusch}).

\begin{proposition}
\label{injective bounded below} Let $V_{\alpha}$ be a Banach space with norm $%
\left\Vert \cdot\right\Vert _{\alpha}$ for $\alpha\in D,$ and $\|\cdot\|$ be a
norm on $\mathbf{V}_D= \left._a \bigotimes_{\alpha \in D} V_{\alpha}\right..$ The following
statements hold.

\begin{itemize}
\item[(a)] For each $\alpha\in D$ introduce the tensor Banach space $%
\mathbf{X}_{\alpha}:=\left._{\|\cdot\|_{\vee(V_1,\ldots,V_{\alpha-1},V_{\alpha+1},%
\ldots,V_d)}} \bigotimes_{\beta \neq \alpha}V_\beta \right..$ Then 
\begin{equation}
\|\cdot\|_{\vee(V_1,\ldots,V_d)} = \|\cdot\|_{\vee\left(V_{\alpha}, \mathbf{X}_{\alpha}
\right)}
\end{equation}
holds for $\alpha \in D.$

\item[(b)] The injective norm is the weakest reasonable crossnorm on $%
\mathbf{V},$ i.e., if $\left\Vert \cdot\right\Vert $ is a reasonable
crossnorm on $\mathbf{V},$ then 
\begin{equation}
\left\Vert \cdot\right\Vert \;\gtrsim\ \left\Vert \cdot\right\Vert
_{\vee(V_1,\ldots,V_d)}.  \label{(Norm staerker als ind*}
\end{equation}

\item[(c)] For any norm $\left\Vert \cdot\right\Vert $ on $\mathbf{V}$
satisfying $\left\Vert \cdot\right\Vert
_{\vee(V_1,\ldots,V_d)}\lesssim\left\Vert \cdot\right\Vert ,$ the map (\ref%
{bigotimes}) is continuous, and hence Fr\'echet differentiable.
\end{itemize}
\end{proposition}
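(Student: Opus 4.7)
The plan is to establish the three assertions in sequence, reducing each to a direct calculation with the definition of the injective norm. Part (a) is the structural core: both sides are suprema of normalized dual pairings, and one must show they coincide. The inequality $\|\mathbf{v}\|_{\vee(V_1,\ldots,V_d)} \leq \|\mathbf{v}\|_{\vee(V_{\alpha},\mathbf{X}_{\alpha})}$ follows by feeding elementary tensor functionals into the right-hand supremum: each $\Phi = \bigotimes_{\beta \neq \alpha}\varphi_{\beta}$ lies in $\mathbf{X}_{\alpha}^{*}$, and since $\|\cdot\|_{\vee(V_1,\ldots,V_{\alpha-1},V_{\alpha+1},\ldots,V_d)}$ is already known to be a reasonable crossnorm, one has $\|\Phi\|_{\mathbf{X}_{\alpha}^{*}} = \prod_{\beta \neq \alpha}\|\varphi_{\beta}\|_{\beta}^{*}$. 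For the converse, I would take an arbitrary $\varphi_{\alpha}\in V_{\alpha}^{*}$ and an arbitrary $\Phi \in \mathbf{X}_{\alpha}^{*}$ and use the key identity $(\varphi_{\alpha}\otimes \Phi)(\mathbf{v}) = \Phi\bigl((\varphi_{\alpha}\otimes id_{\mathbf{X}_{\alpha}})(\mathbf{v})\bigr)$, bounding this by $\|\Phi\|_{\mathbf{X}_{\alpha}^{*}} \cdot \|(\varphi_{\alpha}\otimes id)(\mathbf{v})\|_{\vee}$ and then unfolding the injective norm on $\mathbf{X}_{\alpha}$ as a supremum over elementary tensor functionals $\bigotimes_{\beta \neq \alpha}\varphi_{\beta}$, which recovers a normalized term bounded by $\|\varphi_{\alpha}\|_{\alpha}^{*}\cdot \|\mathbf{v}\|_{\vee(V_1,\ldots,V_d)}$.

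Part (b) is a short direct computation using the reasonable crossnorm hypothesis (\ref{(rcn b}). By definition of the dual norm, for every $\mathbf{v}\in \mathbf{V}$ and every tuple $(\varphi_{\alpha})_{\alpha\in D}$ with $\varphi_{\alpha}\in V_{\alpha}^{*}$,
\begin{equation*}
\bigl|(\bigotimes\nolimits_{\alpha\in D}\varphi_{\alpha})(\mathbf{v})\bigr| \;\leq\; \Bigl\|\bigotimes\nolimits_{\alpha\in D}\varphi_{\alpha}\Bigr\|^{*}\,\|\mathbf{v}\| \;=\; \prod\nolimits_{\alpha\in D}\|\varphi_{\alpha}\|_{\alpha}^{*}\,\|\mathbf{v}\|,
\end{equation*}
and dividing by $\prod\|\varphi_{\alpha}\|_{\alpha}^{*}$ and passing to the supremum yields $\|\mathbf{v}\|_{\vee(V_1,\ldots,V_d)}\leq \|\mathbf{v}\|$. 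Part (c) follows from the fact that the injective norm is itself a crossnorm, so that $\|\bigotimes_{\alpha\in D} v_{\alpha}\|_{\vee(V_1,\ldots,V_d)} = \prod_{\alpha\in D}\|v_{\alpha}\|_{\alpha}$ holds on elementary tensors; combining this identity with the hypothesized comparability between $\|\cdot\|$ and $\|\cdot\|_{\vee(V_1,\ldots,V_d)}$ produces the estimate $\|\bigotimes_{\alpha\in D} v_{\alpha}\| \lesssim \prod_{\alpha\in D}\|v_{\alpha}\|_{\alpha}$, which is exactly the continuity criterion recorded in Remark \ref{tensor product continuity}. Fréchet ($\mathcal{C}^{\infty}$) differentiability then follows immediately from Proposition \ref{multilinear}.

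The principal obstacle is the converse inequality in (a): the dual $\mathbf{X}_{\alpha}^{*}$ is typically strictly larger than the algebraic tensor product of the factor duals, so one cannot simply decompose $\Phi$ as a finite combination of elementary tensor functionals and push supremums through. The trick, as indicated above, is to factor the pairing as $\Phi\circ(\varphi_{\alpha}\otimes id_{\mathbf{X}_{\alpha}})$ and invoke the definition of the injective norm on the smaller tensor product $\mathbf{X}_{\alpha}$, which legitimately reduces the computation to elementary functionals one index at a time. Once this reduction is in place, parts (b) and (c) are essentially bookkeeping.
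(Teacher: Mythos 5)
Your treatment of (a) and (b) is correct and follows the standard route (the paper itself offers no proof here, deferring to Lemma 4.96 and Section 4.2.4 of \cite{Hackbusch}, where precisely your factorisation $(\varphi_{\alpha}\otimes \Phi)(\mathbf{v})=\Phi\bigl((\varphi_{\alpha}\otimes id)(\mathbf{v})\bigr)$ and the duality estimate of (b) appear); for the easy inequality in (a) you only need $\Vert\bigotimes_{\beta\neq\alpha}\varphi_{\beta}\Vert_{\mathbf{X}_{\alpha}^{\ast}}\leq\prod_{\beta\neq\alpha}\Vert\varphi_{\beta}\Vert_{\beta}^{\ast}$, which is immediate from the definition of the injective norm, so the full reasonable-crossnorm property is not even required there.

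Part (c) is where your argument fails: the inequality comes out reversed. The hypothesis is $\Vert\cdot\Vert_{\vee(V_1,\ldots,V_d)}\lesssim\Vert\cdot\Vert$, i.e. $\Vert\mathbf{v}\Vert_{\vee}\leq C\Vert\mathbf{v}\Vert$. Combining this with the crossnorm identity $\Vert\bigotimes_{\alpha\in D}v_{\alpha}\Vert_{\vee}=\prod_{\alpha\in D}\Vert v_{\alpha}\Vert_{\alpha}$ gives $\prod_{\alpha\in D}\Vert v_{\alpha}\Vert_{\alpha}\leq C\,\Vert\bigotimes_{\alpha\in D}v_{\alpha}\Vert$, which is a \emph{lower} bound on $\Vert\bigotimes_{\alpha\in D}v_{\alpha}\Vert$, whereas continuity of the map (\ref{bigotimes}) requires the \emph{upper} bound $\Vert\bigotimes_{\alpha\in D}v_{\alpha}\Vert\lesssim\prod_{\alpha\in D}\Vert v_{\alpha}\Vert_{\alpha}$ recorded in Remark \ref{tensor product continuity}; your step would need $\Vert\cdot\Vert\lesssim\Vert\cdot\Vert_{\vee}$, the opposite of what is assumed. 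Moreover, this is not a repairable slip within your strategy: domination of the injective norm alone does not imply continuity of the tensor product map. For instance, on $\ell^{2}\otimes_{a}\ell^{2}$ take $\Vert A\Vert:=\Vert A\Vert_{\vee}+|\phi(A)|$, where $\phi$ is the linear functional obtained from a Hamel basis containing the linearly independent family $\{e_{n}\otimes e_{n}\}_{n}$ by setting $\phi(e_{n}\otimes e_{n})=n$ and $\phi=0$ on the remaining basis vectors; then $\Vert\cdot\Vert\geq\Vert\cdot\Vert_{\vee}$, yet $\Vert e_{n}\otimes e_{n}\Vert=n+1$ while $\Vert e_{n}\Vert_{2}\Vert e_{n}\Vert_{2}=1$, so the bilinear map $\otimes$ is unbounded. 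To obtain the conclusion of (c) one must invoke an upper bound of crossnorm type for $\Vert\cdot\Vert$ as in \eqref{(rcn a}, which is an additional hypothesis on the norm and not a consequence of \eqref{(Norm staerker als ind*}; your proof should flag this rather than derive it from the stated comparability.
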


The following result shows an interesting use of the injective norm. 

\begin{corollary}
Assume that
$(V_{\alpha},\|\cdot\|_{\alpha})$ is a normed space for each $\alpha \in D.$ 
Then the algebraic tensor space $\mathbf{V} =  \left._a \bigotimes_{\alpha \in D} V_{\alpha } \right.$ is a $\mathcal{C}^{\infty}$-Banach
manifold not modelled on a particular Banach space.
\end{corollary}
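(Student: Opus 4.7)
The plan is to reduce this to the previous Corollary~\ref{Bounded_Banach_Manifold}, which already establishes the $\mathcal{C}^{\infty}$-Banach manifold structure on $\mathbf{V}_D$ provided there exists \emph{some} norm on $\mathbf{V}_D$ for which the tensor product map (\ref{bigotimes}) is continuous. The only thing to do, therefore, is to exhibit such a norm whenever each $V_{\alpha}$ is merely a normed space.

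The natural candidate is the injective norm $\|\cdot\|_{\vee(V_1,\ldots,V_d)}$. Note that its definition \eqref{(Norm ind*(V1,...,Vd)} is perfectly meaningful when each $V_{\alpha}$ is only a normed space, because the topological duals $V_{\alpha}^{\ast}$ are always Banach spaces (this is recalled right after \eqref{(Dualnorm}). By Proposition~\ref{injective bounded below}(c), the tensor product map
\[
\bigotimes : \mathop{\mathchoice{\raise-0.22em\hbox{\huge $\times$}} {\raise-0.05em\hbox{\Large $\times$}}{\hbox{\large $\times$}}{\times}}_{\alpha \in D} (V_{\alpha}, \|\cdot\|_{\alpha}) \longrightarrow \Bigl( \left._{a}\bigotimes_{\alpha \in D} V_{\alpha}\right., \|\cdot\|_{\vee(V_1,\ldots,V_d)} \Bigr)
\]
is continuous (in fact the injective norm is, by construction, the weakest crossnorm for which this holds).

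With continuity of the tensor product map established for the norm $\|\cdot\|_{\vee(V_1,\ldots,V_d)}$ on $\mathbf{V}_D$, the hypotheses of Corollary~\ref{Bounded_Banach_Manifold} are met. That corollary then immediately gives that $\mathbf{V}_D = \left._{a}\bigotimes_{\alpha \in D} V_{\alpha}\right.$ is a $\mathcal{C}^{\infty}$-Banach manifold not modelled on a particular Banach space, which is exactly the desired conclusion.

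There is no real obstacle here; the work has been done upstream. The only subtlety worth mentioning (and which justifies stating this as a separate corollary rather than folding it into the previous one) is the observation that the injective norm continues to make sense in the normed, non-Banach, setting, so that one never needs to prescribe a norm on $\mathbf{V}_D$ by hand: the manifold structure on the bare algebraic tensor product of normed spaces is canonical in this sense, since the manifold structure itself is independent of the choice of norm (as stressed just before Example~\ref{example_BM1}).
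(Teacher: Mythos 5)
Your proposal is correct and follows essentially the same route as the paper: both reduce to Corollary~\ref{Bounded_Banach_Manifold} by equipping $\mathbf{V}_D$ with the injective norm and invoking Proposition~\ref{injective bounded below}(c) for continuity of the tensor product map. The only (cosmetic) difference is that the paper first passes to the completions $V_{\alpha_{\|\cdot\|_{\alpha}}}$ so that Proposition~\ref{injective bounded below}(c) is applied within its stated Banach-space hypotheses and then restricts to $\mathop{\mathchoice{\raise-0.22em\hbox{\huge $\times$}}{\raise-0.05em\hbox{\Large $\times$}}{\hbox{\large $\times$}}{\times}}_{\alpha\in D}V_{\alpha}$, whereas you use the injective norm directly on the normed spaces, justified by the observation that it only involves the duals $V_{\alpha}^{\ast}$, which are unchanged under completion.
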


\begin{proof}
Let $V_{\alpha_{\|\cdot\|_{\alpha}}}$ be the Banach space obtained by the completion of
$V_{\alpha}$ by using the norm $\|\cdot\|_{\alpha}$ for $\alpha \in D.$
Then we have
$$
\mathbf{V} = \left._a \bigotimes_{\alpha \in D} V_{\alpha } \right. \subset \mathbf{V}^{\star} = \left._a \bigotimes_{\alpha \in D} V_{\alpha_{\|\cdot\|_{\alpha}}} \right..
$$
From Proposition  \ref{injective bounded below}(c) the map  
$$
\bigotimes :\left(\mathop{\mathchoice{\raise-0.22em\hbox{\huge $\times$}}
{\raise-0.05em\hbox{\Large $\times$}}{\hbox{\large $\times$}}{\times}}%
_{\alpha \in D} V_{\alpha_{\|\cdot\|_{\alpha}}},\left\Vert \cdot\right\Vert \right) 
\longrightarrow%
\bigg(
\mathbf{V}^{\star}, \|\cdot\|_{\vee(V_{1_{\|\cdot\|_{1}}},\ldots, V_{d_{\|\cdot\|_{d}}})}
\bigg)
$$
is continuous and hence  
$$
\bigotimes :\left(\mathop{\mathchoice{\raise-0.22em\hbox{\huge $\times$}}
{\raise-0.05em\hbox{\Large $\times$}}{\hbox{\large $\times$}}{\times}}%
_{\alpha \in D} V_{\alpha},\left\Vert \cdot\right\Vert \right) 
\longrightarrow%
\bigg(
\mathbf{V}, \|\cdot\|_{\vee(V_{1_{\|\cdot\|_{1}}},\ldots, V_{d_{\|\cdot\|_{d}}})}
\bigg)
$$
is also continuous. Then Corollary \ref{Bounded_Banach_Manifold}
proves the desired conclusion.
\end{proof}

\begin{remark}
Observe that from the proof of the above corollary, we can conclude that $\mathbf{V}^{\star} = 
\left._a \bigotimes_{\alpha \in D} V_{\alpha_{\|\cdot\|_{\alpha}}} \right.$ is also 
 a $\mathcal{C}^{\infty}$-Banach
manifold not modelled on a particular Banach space.
\end{remark}

\subsubsection{$\mathrm{T}_{\mathbf{v}}\mathfrak{i%
}(\mathbb{T}_{\mathbf{v}}(\mathfrak{M}_{\mathfrak{r}}(\mathbf{V}_{D})))$
belongs to $\mathbb{G}(\mathbf{V}_{D_{\|\cdot\|_D}})$}

We will assume that the norm $\left\Vert \cdot\right\Vert_D$ on $\mathbf{V}_D$
satisfies
\begin{align}\label{tree_injective_norm}
\left\Vert \cdot\right\Vert_{\vee(V_1,\ldots,V_d)}\lesssim\left\Vert \cdot\right\Vert_D, 
\end{align} 
and hence, by Proposition~\ref{injective bounded below}(c), under this condition, 
Proposition~\ref{rank_one_tangent_space} also holds.
A first useful result is the following lemma.

\bigskip

\begin{lemma}
\label{two_faces} Assume that
$(V_{\alpha},\|\cdot\|_{\alpha})$ is a normed space 
for each $\alpha \in D$ 
and let $\|\cdot\|_D$ be a norm 
on the tensor space $\mathbf{V}_D = \left.
_{a}\bigotimes_{\alpha \in D}V_{\alpha}\right.$ such that 
\eqref{tree_injective_norm} holds. Let $\beta \in D.$ If $%
W_{\beta }\in \mathbb{G}(V_{\beta _{\Vert \cdot \Vert _{\beta }}})$
satisfies $V_{\beta _{\Vert \cdot \Vert _{\beta }}}=U_{\beta
}\oplus W_{\beta }$ for some finite-dimensional subspace $U_{\beta }$ in $%
V_{\beta _{\Vert \cdot \Vert _{\beta }}},$ then $W_{\beta }\otimes
_{a}U_{[\beta ]}\in \mathbb{G}(\mathbf{V}_{D _{\Vert \cdot \Vert
_{D}}})$ for every finite-dimensional subspace $U_{[\beta ]}\subset \mathbf{V}_{[\beta]} =
\left. _{a}\bigotimes_{\delta \in D\setminus \{\beta\}}V%
_{\delta _{\Vert \cdot \Vert _{\delta }}}\right. .$
\end{lemma}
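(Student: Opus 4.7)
The plan is to exhibit an explicit bounded projection $R$ on $\mathbf{V}_{D_{\Vert \cdot \Vert _{D}}}$ whose image is exactly $W_{\beta }\otimes _{a}U_{[\beta ]}$, which via Proposition~\ref{characterize_P} yields that this subspace lies in $\mathbb{G}(\mathbf{V}_{D_{\Vert \cdot \Vert _{D}}})$.

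First, since $W_{\beta}$ is a split complement of the finite-dimensional $U_{\beta }$, the projection $\pi _{\beta}:=\mathrm{id}_{\beta }-P_{U_{\beta }\oplus W_{\beta }}\in \mathcal{L}(V_{\beta _{\Vert \cdot \Vert _{\beta }}},V_{\beta _{\Vert \cdot \Vert _{\beta }}})$ onto $W_{\beta }$ is bounded. Next, I fix a basis $\{\mathbf{u}_{1},\ldots ,\mathbf{u}_{k}\}$ of $U_{[\beta ]}$ and, using Proposition~\ref{injective bounded below}(a), view it as a finite-dimensional subspace of the injective-norm completion $\mathbf{X}_{\beta }$ of $\mathbf{V}_{[\beta ]}$. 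By Hahn--Banach (or equivalently by Proposition~\ref{finite_dim_subspaces} applied in $\mathbf{X}_{\beta }$) I obtain a dual system $\boldsymbol{\varphi }_{j}\in \mathbf{X}_{\beta }^{\ast }$ with $\boldsymbol{\varphi }_{j}(\mathbf{u}_{i})=\delta _{ij}$. Setting $\iota _{j}:=\mathrm{id}_{\beta }\otimes \boldsymbol{\varphi }_{j}:\mathbf{V}_{D}\rightarrow V_{\beta }$, I then define
\begin{equation*}
R(\mathbf{v}):=\sum_{j=1}^{k}\pi _{\beta }(\iota _{j}(\mathbf{v}))\otimes \mathbf{u}_{j}.
\end{equation*}

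Continuity of $R$ on the completion is the key point, and rests on the injective-norm identity $\Vert \cdot \Vert _{\vee (V_{1},\ldots ,V_{d})}=\Vert \cdot \Vert _{\vee (V_{\beta },\mathbf{X}_{\beta })}$ from Proposition~\ref{injective bounded below}(a): this gives $\Vert \iota _{j}(\mathbf{v})\Vert _{\beta }\leq \Vert \boldsymbol{\varphi }_{j}\Vert _{\mathbf{X}_{\beta }^{\ast }}\Vert \mathbf{v}\Vert _{\vee }$, which by the hypothesis \eqref{tree_injective_norm} is dominated by a multiple of $\Vert \mathbf{v}\Vert _{D}$. Hence each $\iota _{j}$ extends to a bounded operator $\mathbf{V}_{D_{\Vert \cdot \Vert _{D}}}\rightarrow V_{\beta _{\Vert \cdot \Vert _{\beta }}}$. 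The map $v\mapsto v\otimes \mathbf{u}_{j}$ is bounded from $V_{\beta _{\Vert \cdot \Vert _{\beta }}}$ into $\mathbf{V}_{D_{\Vert \cdot \Vert _{D}}}$ by the assumed continuity of the tensor product map (decompose each $\mathbf{u}_{j}$ as a finite sum of elementary tensors and apply \eqref{bigotimes}). Thus $R$ is a finite sum of compositions of bounded operators and therefore extends to $\widetilde{R}\in \mathcal{L}(\mathbf{V}_{D_{\Vert \cdot \Vert _{D}}},\mathbf{V}_{D_{\Vert \cdot \Vert _{D}}})$.

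A direct calculation using $\boldsymbol{\varphi }_{j}(\mathbf{u}_{i})=\delta _{ij}$ and $\pi _{\beta }^{2}=\pi _{\beta }$ gives $R^{2}=R$, hence $\widetilde{R}^{2}=\widetilde{R}$. Clearly $\mathrm{Im}\,\widetilde{R}\supset \mathrm{Im}\,R\supset W_{\beta }\otimes _{a}U_{[\beta ]}$, since $\widetilde{R}$ acts as the identity on $W_{\beta }\otimes _{a}U_{[\beta ]}$. For the reverse inclusion, the image of $\widetilde{R}$ sits in $W_{\beta }\otimes _{a}U_{[\beta ]}$ because $\pi _{\beta }$ takes values in the closed subspace $W_{\beta }\subset V_{\beta _{\Vert \cdot \Vert _{\beta }}}$: the linear bijection $W_{\beta }^{k}\rightarrow W_{\beta }\otimes _{a}U_{[\beta ]}$ sending $(w_{1},\ldots ,w_{k})\mapsto \sum _{j}w_{j}\otimes \mathbf{u}_{j}$ is a homeomorphism (continuous by the tensor-product continuity, with continuous inverse via the $\iota _{j}$), so its domain is a Banach space and the image is closed in $\mathbf{V}_{D_{\Vert \cdot \Vert _{D}}}$. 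Consequently $\mathrm{Im}\,\widetilde{R}=W_{\beta }\otimes _{a}U_{[\beta ]}$, and Proposition~\ref{characterize_P} completes the proof.

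The main obstacle is the continuity of the partial contractions $\iota _{j}$, which is not automatic for an arbitrary crossnorm on $\mathbf{V}_{D}$; this is precisely where the bound $\Vert \cdot \Vert _{\vee }\lesssim \Vert \cdot \Vert _{D}$ combines with the injective-norm associativity of Proposition~\ref{injective bounded below}(a) to deliver the required estimate.
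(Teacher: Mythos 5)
Your proof is correct, and the projection you build is the same operator the paper uses: your $R$ is $P_{W_{\beta}\oplus U_{\beta}}\otimes P_{U_{[\beta]}\oplus W_{[\beta]}}$ with the finite-rank factor written out through a biorthogonal system $\{\mathbf{u}_{j},\boldsymbol{\varphi}_{j}\}$. The routes to boundedness differ. The paper first shows that $id_{\beta}\otimes P_{U_{[\beta]}\oplus W_{[\beta]}}$ is continuous by invoking Lemma~3.18 of \cite{FALHACK} (which produces the constant $\sqrt{\dim U_{[\beta]}}$), and then recovers the desired projection as the difference $id_{\beta}\otimes P_{U_{[\beta]}\oplus W_{[\beta]}}-P_{U_{\beta}\oplus W_{\beta}}\otimes P_{U_{[\beta]}\oplus W_{[\beta]}}$ of two continuous maps; you instead bound each rank-one partial contraction $\iota_{j}=id_{\beta}\otimes\boldsymbol{\varphi}_{j}$ directly from the duality definition of the injective norm combined with Proposition~\ref{injective bounded below}(a) and the hypothesis \eqref{tree_injective_norm}, which reproves the content of that cited lemma and makes the argument self-contained. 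Two further small gains of your write-up: you do not need the paper's separate (trivial) treatment of the case where $W_{\beta}$ is finite-dimensional, and you justify why the image of the extended operator is exactly the algebraic space $W_{\beta}\otimes_{a}U_{[\beta]}$ rather than merely contained in its closure, by showing that this subspace is complete (isomorphic to $W_{\beta}^{k}$ via $(w_{j})_{j}\mapsto\sum_{j}w_{j}\otimes\mathbf{u}_{j}$) and hence closed --- a point the paper passes over in silence. Both arguments conclude with Proposition~\ref{characterize_P}.
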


\begin{proof}
First, observe that if $W_{\beta }$ is a finite-dimensional subspace, then $%
W_{\beta }\otimes _{a}U_{[\beta ]}$ is also finite-dimensional, and hence
the lemma follows. Thus, assume that $W_{\beta }$ is an infinite-dimensional
closed subspace of $V_{\beta _{\Vert \cdot \Vert _{\beta }}},$ and
to simplify the notation write 
\begin{equation*}
\mathbf{X}_{\beta }:=\left. _{\Vert \cdot \Vert _{\vee(
	V_1,\ldots,V_{\beta-1},V_{\beta+1},\ldots,V_{d})
}}\bigotimes_{\delta \in D\setminus \{\beta \}}\mathbf{V}_{\delta
_{\Vert \cdot \Vert _{\delta }}}\right. .
\end{equation*}%
If $U_{[\beta ]}\subset \mathbf{X}_{\beta }$ is a finite-dimensional
subspace, then there exists $W_{[\beta ]}\in \mathbb{G}(\mathbf{X}_{\beta })$
such that $\mathbf{X}_{\beta }=U_{[\beta ]}\oplus W_{[\beta ]}.$ Since the
tensor product map 
\begin{equation*}
\bigotimes :(V_{\beta _{\Vert \cdot \Vert _{\beta }}},\Vert \cdot
\Vert _{\beta })\times \left( \mathbf{X}_{\beta },\Vert \cdot \Vert _{\vee(
	V_1,\ldots,V_{\beta-1},V_{\beta+1},\ldots,V_{d})
}\right) \rightarrow (\mathbf{V}_{D _{\Vert
\cdot \Vert _{D}}},\Vert \cdot \Vert _{D })
\end{equation*}%
is continuous and by Lemma~3.18 in \cite{FALHACK}, for each elementary
tensor $\mathbf{v}_{\beta }\otimes \mathbf{v}_{[\beta ]}\in \mathbf{V}%
_{\beta _{\Vert \cdot \Vert _{\beta }}}\otimes _{a}\mathbf{X}_{\beta }$ we
have 
\begin{align*}
\Vert (id_{\beta }\otimes P_{_{U_{[\beta ]}\oplus W_{[\beta ]}}})(\mathbf{v}%
_{\beta }\otimes \mathbf{v}_{[\beta ]})\Vert _{\alpha }& \leq C\sqrt{\dim
U_{[\beta ]}}\,\Vert \mathbf{v}_{\beta }\Vert _{\beta }\Vert \mathbf{v}%
_{[\beta ]}\Vert _{\vee(
	V_1,\ldots,V_{\beta-1},V_{\beta+1},\ldots,V_{d})} \\
& =C\,\sqrt{\dim U_{[\beta ]}}\,\Vert \mathbf{v}_{\beta }\otimes \mathbf{v}%
_{[\beta ]}\Vert _{\vee(
	V_1,\ldots,V_{\beta-1},V_{\beta},V_{\beta+1},\ldots,V_{d})} \\
& \leq C^{\prime }\,\sqrt{\dim U_{[\beta ]}}\,\Vert \mathbf{v}_{\beta
}\otimes \mathbf{v}_{[\beta ]}\Vert _{D}.
\end{align*}%
Thus, $(id_{\beta }\otimes P_{_{U_{[\beta ]}\oplus W_{[\beta ]}}})$ is
continuous over $\mathbf{V}_{\beta _{\Vert \cdot \Vert _{\beta }}}\otimes
_{a}\mathbf{X}_{\beta },$ and hence in $\mathbf{V}_{D _{\Vert \cdot
\Vert _{D}}}.$ Now, take into account the fact that 
\begin{equation*}
id_{\beta }=P_{_{U_{\beta }\oplus W_{\beta }}}+P_{_{W_{\beta }\oplus
U_{\beta }}},
\end{equation*}%
so that
\begin{equation*}
id_{\beta }\otimes P_{_{U_{[\beta ]}\oplus W_{[\beta ]}}}=P_{_{U_{\beta
}\oplus W_{\beta }}}\otimes P_{_{U_{[\beta ]}\oplus W_{[\beta
]}}}+P_{_{W_{\beta }\oplus U_{\beta }}}\otimes P_{_{U_{[\beta ]}\oplus
W_{[\beta ]}}}.
\end{equation*}%
Observe that $id_{\beta }\otimes P_{_{U_{[\beta ]}\oplus W_{[\beta ]}}}$ and 
$P_{_{U_{\beta }\oplus W_{\beta }}}\otimes P_{_{U_{[\beta ]}\oplus W_{[\beta
]}}}$ are continuous linear maps over $\mathbf{V}_{\beta _{\Vert \cdot \Vert
_{\beta }}}\otimes _{a}\mathbf{X}_{\beta },$ and then $P_{_{W_{\beta }\oplus
U_{\beta }}}\otimes P_{_{U_{[\beta ]}\oplus W_{[\beta ]}}}$ is a continuous
linear map over $\mathbf{V}_{\beta _{\Vert \cdot \Vert _{\beta }}}\otimes
_{a}\mathbf{X}_{\beta }.$ Thus, 
\begin{equation*}
\mathcal{P}:=\overline{P_{_{W_{\beta }\oplus U_{\beta }}}\otimes
P_{_{U_{[\beta ]}\oplus W_{[\beta ]}}}}\in \mathcal{L}(\mathbf{V}_{D
_{\Vert \cdot \Vert _{D}}},\mathbf{V}_{D _{\Vert \cdot \Vert
_{D }}})
\end{equation*}%
and $\mathcal{P}\circ \mathcal{P}=\mathcal{P}.$
Since $\mathcal{P}(\mathbf{V}_{D_{\Vert \cdot \Vert _{D
}}})=W_{\beta }\otimes _{a}U_{[\beta ]},$ 
the lemma follows by Proposition~\ref{characterize_P}.
\end{proof}

\bigskip

\begin{lemma}
\label{direct_sum_G} Let $X$ be a Banach space and assume that $U,V \in 
\mathbb{G}(X).$ If $U\cap V=\{0\},$ then $U\oplus V\in \mathbb{G}(X).$
Moreover, $U\cap V\in \mathbb{G}(X)$ holds.
\end{lemma}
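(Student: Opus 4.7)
The plan is to translate both assertions into the language of continuous projections via Proposition \ref{characterize_P} and to build the required projections from $P_U, P_V \in \mathcal{L}(X,X)$ furnished by $U, V \in \mathbb{G}(X)$. Setting $W_U := \ker P_U$, we have $X = U \oplus W_U$, and I focus on the restriction $\Phi := (I - P_U)|_V : V \to W_U$. The hypothesis $U \cap V = \{0\}$ makes $\Phi$ injective with image $V_0 := (I - P_U)(V) \subset W_U$, and one has the elementary identity $v = P_V(\Phi(v)) + P_V P_U(v)$ for all $v \in V$, which encodes a continuous relation between $\Phi(v)$ and $v$.

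For the direct sum assertion, the strategy is first to argue that $V_0$ is a closed subspace of $W_U$ and second that $V_0 \in \mathbb{G}(W_U)$. Once both are established, writing $W_U = V_0 \oplus W'$ and combining with $X = U \oplus W_U$ gives $X = (U \oplus V) \oplus W'$, so Proposition \ref{characterize_P}(a) yields $U \oplus V \in \mathbb{G}(X)$. Closedness of $V_0$ would follow from the open mapping theorem applied to $\Phi : V \to V_0$ once a continuous left inverse of $\Phi$ is exhibited; this is precisely where the identity above is used, by rewriting it as $(I - P_V P_U)|_V(v) = P_V(\Phi(v))$ and inverting. The complementation of $V_0$ in $W_U$ is then obtained by composing $\Phi^{-1}$ with $P_V|_{W_U}$ and symmetrising so that the projection identity closes up; the resulting bounded projection on $X$ has image $U + V_0 = U \oplus V$, as required.

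For the intersection $U \cap V \in \mathbb{G}(X)$, I would invoke characterisation (c) of Proposition \ref{characterize_P} and search for $Q \in \mathcal{L}(X,X)$ with $Q^2 = Q$ and $\ker Q = U \cap V$. Starting from the ansatz $Q = I - P_U P_V$ and iterating or symmetrising with $P_V P_U$ and $P_V P_U P_V$ until the projection identity $Q^2 = Q$ holds, I would then verify the kernel by a direct computation noting that $x \in \ker Q$ forces both $P_U(x) = x$ and $P_V(x) = x$; in the special situation $U \cap V = \{0\}$ of the first part, one may trivially take $Q = I$. The main obstacle throughout is the closedness of $V_0$, equivalently of $U + V$: this is not automatic from $U, V \in \mathbb{G}(X)$ and $U \cap V = \{0\}$ alone in a general Banach space, since the ``angle'' between $U$ and $V$ may vanish, so the hard work lies in extracting it from the mere boundedness of $P_V$. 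In the applications of this lemma in Section \ref{embedded_manifold}, closedness is secured by the finite-dimensionality of at least one of the relevant factors, as provided by Lemma \ref{two_faces} and Proposition \ref{finite_dim_subspaces}.
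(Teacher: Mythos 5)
Your strategy stalls exactly where you say it does, and the obstacle you identify is not a technical inconvenience but a genuine obstruction: the first assertion of the lemma is false at this level of generality, so no completion of your argument exists. In $X=\ell^2$ take $U=\overline{\mathrm{span}}\{e_{2n}:n\geq 1\}$ and $V=\overline{\mathrm{span}}\{e_{2n}+\tfrac{1}{n}e_{2n+1}:n\geq 1\}$; both are closed subspaces of a Hilbert space, hence lie in $\mathbb{G}(X)$, and $U\cap V=\{0\}$, yet $U+V$ is a dense proper (non-closed) subspace of $\overline{\mathrm{span}}\{e_{k}:k\geq 2\}$, so $U\oplus V\notin\mathbb{G}(X)$. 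Consequently your map $\Phi=(I-P_U)|_V$ cannot admit a bounded left inverse in general and the open mapping theorem has nothing to act on. The second assertion is equally delicate: $I-P_UP_V$ is not idempotent unless the two projections interact favourably, and the intersection of two complemented subspaces of a Banach space need not be complemented, so no amount of symmetrising the ansatz will produce the required $Q$ from Proposition \ref{characterize_P}(c) in general.

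For comparison, the paper's own proof takes a completely different route: it picks complements with $X=U\oplus U'=V\oplus V'$ and computes $U\oplus V\oplus(U'\cap V')=(U\oplus U')\cap(V\oplus V')=X$. That computation rests on the identity $(V\oplus V')\cap U=(V\cap U)\oplus(V'\cap U)$, i.e.\ on distributivity of the subspace lattice, which already fails in $\mathbb{R}^2$ (take $U,U'$ the coordinate axes and $V,V'$ the two diagonals: then $U\cap V'=\{0\}\neq U$). So the published argument conceals the very gap your approach makes visible, rather than closing it. Your final remark points in the right direction: where the lemma is actually invoked (Theorem \ref{closed_linear_subspace}), each summand of $\mathbf{Z}^{(D)}(\mathbf{v})$ comes equipped with an explicit bounded projection constructed in Lemma \ref{two_faces} from tensorised factor projections, and it is that additional structure --- not the bare hypotheses $U,V\in\mathbb{G}(X)$, $U\cap V=\{0\}$ --- that must be used to conclude $\mathbf{Z}^{(D)}(\mathbf{v})\in\mathbb{G}(\mathbf{V}_{D_{\Vert\cdot\Vert_D}})$.
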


\begin{proof}
To prove the first statement assume that $U \cap V = \{0\}.$ Since $U,V \in 
\mathbb{G}(X)$ there exist $U^{\prime },V^{\prime }\in \mathbb{G}(X),$ such
that $X=U \oplus U^{\prime }= V \oplus V^{\prime }.$ Then $U=X \cap U = (V
\oplus V^{\prime }) \cap U = U \cap V^{\prime }$ and $V = X \cap V = (U
\oplus U^{\prime }) \cap V = V \cap U^{\prime }.$ In consequence, we can
write 
\begin{equation*}
U \oplus V \oplus (U^{\prime }\cap V^{\prime }) = (U \cap V^{\prime })
\oplus (V \cap U^{\prime }) \oplus (U^{\prime }\cap V^{\prime }) = (U \oplus
U^{\prime }) \cap ( V \oplus V^{\prime }) = X,
\end{equation*}
and the first statement follows. To prove the second one, observe that $X =
(U \cap V) \oplus (U \cap V^{\prime }) \oplus (V \cap U^{\prime }) \oplus
(U^{\prime }\cap V^{\prime }). $
\end{proof}

\bigskip

An important consequence of the above two lemmas is the following theorem.

\begin{theorem}
\label{closed_linear_subspace} Assume that
$(V_{\alpha},\|\cdot\|_{\alpha})$ is a normed space 
for each $\alpha \in D$ 
and let $\|\cdot\|_D$ be a norm 
on the tensor space $\mathbf{V}_D = \left.
_{a}\bigotimes_{\alpha \in D}V_{\alpha}\right.$ such that  
\eqref{tree_injective_norm} holds. Then for each
$\mathbf{v} \in \mathfrak{M}_{\mathfrak{r}}(\mathbf{V}_{D})$ we have 
$
\mathbf{Z}^{(D)}(\mathbf{v}) \in \mathbb{G}(\mathbf{V}_{D_{\Vert \cdot \Vert
_{D}}}),$
and hence $\mathfrak{M}_{\mathfrak{r}}(\mathbf{V}_{D})$ is an
immersed submanifold of $\mathbf{V}_{D_{\Vert \cdot \Vert _{D}}}.$
\end{theorem}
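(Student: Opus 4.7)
The plan is to decompose $\mathbf{Z}^{(D)}(\mathbf{v})$ into $d+1$ pieces that have already been treated in the preceding lemmas, and then assemble them by iterated application of Lemma \ref{direct_sum_G}. Write
\begin{equation*}
A_0 := \left._a\bigotimes_{\alpha \in D} U_\alpha^{\min}(\mathbf{v})\right., \qquad A_\beta := W_\beta^{\min}(\mathbf{v}) \otimes_a U_{D\setminus\{\beta\}}^{\min}(\mathbf{v}), \quad \beta \in D,
\end{equation*}
so that $\mathbf{Z}^{(D)}(\mathbf{v}) = A_0 + \sum_{\beta \in D} A_\beta$. The strategy is: (i) show each $A_k \in \mathbb{G}(\mathbf{V}_{D_{\|\cdot\|_D}})$; (ii) check the algebraic direct-sum structure; (iii) conclude via Lemma \ref{direct_sum_G}; (iv) invoke Propositions \ref{rank_one_tangent_space} and \ref{prop_inmersion}.

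For step (i), $A_0$ is finite-dimensional (of dimension $\prod_{\alpha \in D} r_\alpha$), hence belongs to $\mathbb{G}(\mathbf{V}_{D_{\|\cdot\|_D}})$ by Proposition \ref{finite_dim_subspaces}. For $A_\beta$, $\beta \in D$, I would apply Lemma \ref{two_faces} to the decomposition $V_{\beta_{\|\cdot\|_\beta}} = U_\beta^{\min}(\mathbf{v}) \oplus W_\beta^{\min}(\mathbf{v})$ with the finite-dimensional choice $U_{[\beta]} := U_{D\setminus\{\beta\}}^{\min}(\mathbf{v}) \subset \mathbf{V}_{[\beta]}$; this directly yields $A_\beta \in \mathbb{G}(\mathbf{V}_{D_{\|\cdot\|_D}})$. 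It is precisely here that the hypothesis $\|\cdot\|_{\vee(V_1,\ldots,V_d)} \lesssim \|\cdot\|_D$ is essential, as it is what guarantees continuity of the complementary projection on the ambient tensor Banach space.

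For step (ii), the decompositions $V_{\alpha_{\|\cdot\|_\alpha}} = U_\alpha^{\min}(\mathbf{v}) \oplus W_\alpha^{\min}(\mathbf{v})$ give the purely algebraic direct-sum decomposition
\begin{equation*}
\left._a\bigotimes_{\alpha \in D}\bigl(U_\alpha^{\min}(\mathbf{v}) \oplus W_\alpha^{\min}(\mathbf{v})\bigr)\right. \;=\; \bigoplus_{\epsilon \in \{0,1\}^{D}} \left._a\bigotimes_{\alpha \in D} X_\alpha^{\epsilon_\alpha}\right.,
\end{equation*}
where $X_\alpha^0 = U_\alpha^{\min}(\mathbf{v})$ and $X_\alpha^1 = W_\alpha^{\min}(\mathbf{v})$. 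The subspaces $A_0, A_1, \ldots, A_d$ are exactly the summands indexed by $\epsilon$ with $|\epsilon| \le 1$, so they are mutually algebraically disjoint and, more strongly, for each $k$ the partial sum $A_0 + A_1 + \cdots + A_{k-1}$ has trivial intersection with $A_k$.

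For step (iii), a straightforward induction using Lemma \ref{direct_sum_G} then yields $A_0 + A_1 \in \mathbb{G}(\mathbf{V}_{D_{\|\cdot\|_D}})$, then $A_0 + A_1 + A_2 \in \mathbb{G}(\mathbf{V}_{D_{\|\cdot\|_D}})$, and so on, until $\mathbf{Z}^{(D)}(\mathbf{v}) \in \mathbb{G}(\mathbf{V}_{D_{\|\cdot\|_D}})$. Combining this with Proposition \ref{rank_one_tangent_space}, which identifies $\mathrm{T}_{\mathbf{v}}\mathfrak{i}(\mathbb{T}_{\mathbf{v}}(\mathfrak{M}_{\mathfrak{r}}(\mathbf{V}_D))) = \mathbf{Z}^{(D)}(\mathbf{v})$ and asserts injectivity of $\mathrm{T}_{\mathbf{v}}\mathfrak{i}$, the immersion criterion (Proposition \ref{prop_inmersion}) gives immediately that $\mathfrak{i}$ is an immersion at every $\mathbf{v}$, whence $\mathfrak{M}_{\mathfrak{r}}(\mathbf{V}_D)$ is an immersed submanifold of $\mathbf{V}_{D_{\|\cdot\|_D}}$. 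The main obstacle of the theorem has really been absorbed into Lemma \ref{two_faces}; what remains is algebraic assembly via Lemma \ref{direct_sum_G}.
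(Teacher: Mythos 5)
Your proposal is correct and follows essentially the same route as the paper: Lemma \ref{two_faces} for each summand $W_\beta^{\min}(\mathbf{v})\otimes_a U_{D\setminus\{\beta\}}^{\min}(\mathbf{v})$, finite-dimensionality for $\left._a\bigotimes_{\alpha\in D}U_\alpha^{\min}(\mathbf{v})\right.$, and an iterated application of Lemma \ref{direct_sum_G}, followed by Propositions \ref{rank_one_tangent_space} and \ref{prop_inmersion}. Your only addition is to make explicit the trivial-intersection hypothesis (via the $2^d$-fold algebraic decomposition) and the induction, which the paper leaves implicit.
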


\begin{proof}
For each $\alpha \in D$ we have $W_{\alpha }^{\min }(\mathbf{v})\in 
\mathbb{G}(V_{\alpha _{\Vert \cdot \Vert _{\alpha }}})$ and $%
U_{D\setminus \{\alpha \}}^{\min }(\mathbf{v})\subset \left.
_{a}\bigotimes_{\delta \in D\setminus \{\alpha \}}\mathbf{V}_{\delta
_{\Vert \cdot \Vert _{\delta }}}\right. $ is a finite-dimensional subspace.
From Lemma~\ref{two_faces} we have $W_{\alpha }^{\min }(\mathbf{v})\otimes
_{a}U_{D\setminus \{\alpha \}}^{\min }(\mathbf{v})\in \mathbb{G}(\mathbf{V%
}_{D_{\Vert \cdot \Vert _{D}}})$ for all $\alpha \in D.$ Since $\left.
_{a}\bigotimes_{\alpha \in D}U_{\alpha }^{\min }(\mathbf{v})\right. \in 
\mathbb{G}(\mathbf{V}_{D_{\Vert \cdot \Vert _{D}}}),$ by Lemma~\ref%
{direct_sum_G}, we obtain that $\mathbf{Z}^{(D)}(\mathbf{v})\in \mathbb{G}(%
\mathbf{V}_{D_{\Vert \cdot \Vert _{D}}}).$
\end{proof}

\begin{example}
Let us recall the topological tensor spaces introduced in Example \ref{Bsp HNp}%
. Let $I_{\alpha}\subset \mathbb{R}$ $\left( \alpha \in D\right) $ and $1\leq
p<\infty.$ Let $\mathbf{I}:= 
\mathop{\mathchoice{\raise-0.22em\hbox{\huge
$\times$}} {\raise-0.05em\hbox{\Large $\times$}}{\hbox{\large
$\times$}}{\times}}_{\alpha\in D} I_{\alpha}$. Hence $L^p(\mathbf{I})$
is a tensor Banach space for all $\alpha \in T_D.$ In this example we denote
the usual norm of $L^p(\mathbf{I})$ by $\|\cdot\|_{0,p}.$
Since $\|\cdot\|_{0,p}$ is a reasonable crossnorm (see Example 4.72 in 
\cite{Hackbusch}), then \eqref{tree_injective_norm} holds. From Theorem \ref{closed_linear_subspace} we obtain that $\mathfrak{%
M}_{\mathfrak{r}}\left(\left._a \bigotimes_{\alpha \in D} L^p(I_{\alpha})\right.\right)$
is an immersed submanifold of $L^p(\mathbf{I}).$
\end{example}

\begin{example}
Now, we return to Example~\ref{example_BM1}. From Example 4.42 in \cite%
{Hackbusch} we know that the norm \newline
$\|\cdot\|_{(0,1),p}$ is a crossnorm on $H^{1,p}(I_1) \otimes_a
H^{1,p}(I_2), $ and hence it is not weaker than the injective norm. In
consequence, from Theorem~\ref{closed_linear_subspace}, we obtain that $%
\mathfrak{M}_{\mathfrak{r}}(H^{1,p}(I_1) \otimes_a H^{1,p}(I_2))$ is an
immersed submanifold in $H^{1,p}(I_1) \otimes_{\|\cdot\|_{(0,1),p}}
H^{1,p}(I_2).$
\end{example}

Since in a reflexive Banach space every closed linear subspace is proximinal
(see p. 61 in \cite{Floret}), we have the following corollary.

\begin{corollary}
\label{approximation_corollary} Assume that
$(V_{\alpha},\|\cdot\|_{\alpha})$ is a normed space 
for each $\alpha \in D$ 
and let $\|\cdot\|_D$ be a norm 
on the tensor space $\mathbf{V}_D = \left.
_{a}\bigotimes_{\alpha \in D}V_{\alpha}\right.$ such that 
\eqref{tree_injective_norm} holds and $\mathbf{V}_{D_{\|\cdot\|_D}} = \left.
_{\|\cdot\|_D}\bigotimes_{\alpha \in D}V_{\alpha}\right.$ is a reflexive Banach space. 
Then for any $\mathbf{v}\in \mathfrak{M}_{\mathfrak{r}}(\mathbf{V}_{D})$
and $\dot{\mathbf{u}}\in \mathbf{V}_{D_{\Vert \cdot \Vert _{D}}},$
there exists $\dot{\mathbf{v}}_{best}\in \mathbf{Z}^{(D)}(\mathbf{v})$ such
that 
\begin{equation*}
\Vert \dot{\mathbf{u}}-\dot{\mathbf{v}}_{best}\Vert =\min_{\dot{\mathbf{v}}%
\in \mathbf{Z}^{(D)}(\mathbf{v})}\Vert \dot{\mathbf{u}}-\dot{\mathbf{v}}%
\Vert .
\end{equation*}
\end{corollary}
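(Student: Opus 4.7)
The plan is to reduce the corollary directly to Theorem~\ref{closed_linear_subspace} combined with the classical fact about proximinality in reflexive Banach spaces. The heavy lifting has already been done: once we know that $\mathbf{Z}^{(D)}(\mathbf{v})$ is a closed subspace of $\mathbf{V}_{D_{\|\cdot\|_D}}$, the existence of a minimiser is a standard functional analytic argument.

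First, I would invoke Theorem~\ref{closed_linear_subspace}, whose hypotheses hold verbatim under our standing assumptions: since $\|\cdot\|_{\vee(V_1,\ldots,V_d)} \lesssim \|\cdot\|_D$, we get $\mathbf{Z}^{(D)}(\mathbf{v}) \in \mathbb{G}(\mathbf{V}_{D_{\|\cdot\|_D}})$. In particular, $\mathbf{Z}^{(D)}(\mathbf{v})$ is a closed (and even topologically complemented) subspace of the ambient Banach space $\mathbf{V}_{D_{\|\cdot\|_D}}$.

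Next, I would appeal to the classical proximinality result cited just above the statement (see Floret, p.~61): every closed linear subspace of a reflexive Banach space is proximinal. Since $\mathbf{V}_{D_{\|\cdot\|_D}}$ is reflexive by assumption and $\mathbf{Z}^{(D)}(\mathbf{v})$ is closed, for every $\dot{\mathbf{u}} \in \mathbf{V}_{D_{\|\cdot\|_D}}$ the distance $\inf_{\dot{\mathbf{v}} \in \mathbf{Z}^{(D)}(\mathbf{v})}\|\dot{\mathbf{u}}-\dot{\mathbf{v}}\|$ is attained by some $\dot{\mathbf{v}}_{best}$. For completeness, one can sketch this in one line: pick a minimising sequence $(\dot{\mathbf{v}}_n)$, note it is norm-bounded, extract a weakly convergent subsequence using reflexivity, observe the weak limit lies in $\mathbf{Z}^{(D)}(\mathbf{v})$ because closed convex sets are weakly closed (Mazur), and apply weak lower semicontinuity of the norm.

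There is no real obstacle here; the only subtle point is reminding the reader why the corollary applies, namely that the reflexivity hypothesis is placed on the ambient tensor Banach space $\mathbf{V}_{D_{\|\cdot\|_D}}$ rather than on the factor spaces $V_{\alpha_{\|\cdot\|_\alpha}}$, and that the subspace $\mathbf{Z}^{(D)}(\mathbf{v})$ is closed precisely because of Theorem~\ref{closed_linear_subspace} (which requires the crossnorm-type condition \eqref{tree_injective_norm}, not only continuity of the tensor product map).
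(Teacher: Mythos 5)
Your proposal is correct and follows exactly the paper's own route: Theorem~\ref{closed_linear_subspace} gives $\mathbf{Z}^{(D)}(\mathbf{v})\in\mathbb{G}(\mathbf{V}_{D_{\Vert\cdot\Vert_D}})$, hence closedness, and the corollary then follows from the proximinality of closed subspaces in reflexive Banach spaces, which is precisely the remark the authors place immediately before the statement. Your added sketch of the minimising-sequence/weak-compactness argument is a harmless elaboration of that cited fact.
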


\section{On the Dirac--Frenkel variational principle on tensor Banach spaces}
\label{Dirac_Frenkel}

\subsection{Model reduction in tensor Banach spaces}

In this section we consider the abstract ordinary differential equation in a
reflexive tensor Banach space $\mathbf{V}_{D_{\Vert \cdot \Vert _{D}}},$
given by 
\begin{align}
\dot{\mathbf{u}}(t)& =\mathbf{F}(t,\mathbf{u}(t)),\text{ for }t\geq 0,
\label{BODE1} \\
\mathbf{u}(0)& =\mathbf{u}_{0},  \label{BODE2}
\end{align}%
where we assume $\mathbf{u}_{0}\neq \mathbf{0}$ and $\mathbf{F}:[0,\infty
)\times \mathbf{V}_{D_{\Vert \cdot \Vert _{D}}}\longrightarrow \mathbf{V}%
_{D_{\Vert \cdot \Vert _{D}}}$ satisfying the usual conditions 
to have existence and uniqueness of solutions. As usual we assume that
$(V_{\alpha},\|\cdot\|_{\alpha})$ is a normed space 
for each $\alpha \in D$ 
and let $\|\cdot\|_D$ be a norm 
on the tensor space $\mathbf{V}_D = \left.
_{a}\bigotimes_{\alpha \in D}V_{\alpha}\right.$ such that \eqref{tree_injective_norm}
holds. We want to approximate $%
\mathbf{u}(t),$ for $t\in I:=(0,T )$ for some $T >0,$ by
a differentiable curve $t\mapsto \mathbf{v}_{r}(t)$ from $I$ to $\mathfrak{M}%
_{\mathfrak{r}}(\mathbf{V}_{D}),$ where $\mathfrak{r}\in \mathcal{AD}(\mathbf{V}_D)$
$(\mathfrak{r} \neq \mathbf{0}),$ such that $\mathbf{v}_{r}(0)=\mathbf{v}_{0}\in \mathfrak{M}%
_{\mathfrak{r}}(\mathbf{V}_{D})$ is an approximation of $\mathbf{u}_{0}.$%
\footnote{%
$\mathbf{v}_{0}$ can be chosen as the best approximation of $\mathbf{%
u}_{0}$ in $\mathfrak{M}_{\mathfrak{r}}(\mathbf{V}_D)$ because a best approximation exists \cite{FALHACK}.}

\bigskip

Our main goal is to construct a reduced order model of \eqref{BODE1}--\eqref%
{BODE2} over the Banach manifold $\mathfrak{M}_{\mathfrak{r}}(\mathbf{V}%
_{D}).$ Since $\mathbf{F}(t,\mathbf{v}_{r}(t))\in \mathbf{V}_{D_{\Vert
\cdot \Vert _{D}}},$ for each $t\in I,$ and $\mathbf{Z}^{(D)}(\mathbf{v}%
_{r}(t))$ is a closed linear subspace in $\mathbf{V}_{D_{\Vert \cdot \Vert
_{D}}},$ we have the existence of a $\dot{\mathbf{v}}_{r}(t)\in \mathbf{Z}%
^{(D)}(\mathbf{v}_{r}(t))$ such that 
\begin{equation*}
\Vert \dot{\mathbf{v}}_{r}(t)-\mathbf{F}(t,\mathbf{v}_{r}(t))\Vert
_{D}=\min_{\dot{\mathbf{v}}\in \mathbf{Z}^{(D)}(\mathbf{v}_{r}(t))}\Vert 
\dot{\mathbf{v}}-\mathbf{F}(t,\mathbf{v}_{r}(t))\Vert _{D}.
\end{equation*}
It is well known that, if $\mathbf{V}_{D_{\Vert \cdot \Vert _{D}}}$ is a
Hilbert space, then $\dot{\mathbf{v}}_{r}(t)=\mathcal{P}_{\mathbf{v}_{r}(t)}(%
\mathbf{F}(t,\mathbf{v}_{r}(t))),$ where 
\begin{equation*}
\mathcal{P}_{\mathbf{v}_{r}(t)}=\mathcal{P}_{\mathbf{Z}^{(D)}(\mathbf{v}%
_{r}(t))\oplus \mathbf{Z}^{(D)}(\mathbf{v}_{r}(t))^{\bot }}
\end{equation*}%
is called the \emph{metric projection.} It has the following important
property: $\dot{\mathbf{v}}_{r}(t)=\mathcal{P}_{\mathbf{v}_{r}(t)}(\mathbf{F}%
(t,\mathbf{v}_{r}(t)))$ if and only if
\begin{equation*}
\langle \dot{\mathbf{v}}_{r}(t)-\mathbf{F}(t,\mathbf{v}_{r}(t)),\dot{\mathbf{%
v}}\rangle _{D}=0\text{ for all }\dot{\mathbf{v}} \in \mathbf{Z}^{(D)}(%
\mathbf{v}_{r}(t)).
\end{equation*}

The concept of a metric projection can be extended to the Banach space
setting. To this end we recall the following definitions. A Banach space $X$
with norm $\Vert \cdot \Vert $ is said to be \emph{strictly convex} if $%
\Vert x+y\Vert /2<1$ for all $x,y\in X$ with $\Vert x\Vert =\Vert y\Vert =1$
and $x\neq y.$ It is \emph{uniformly convex} if $\lim_{n\rightarrow \infty
}\Vert x_{n}-y_{n}\Vert =0$ for any two sequences $\{x_{n}\}_{n\in \mathbb{N}%
}$ and $\{y_{n}\}_{n\in \mathbb{N}}$ such that $\Vert x_{n}\Vert =\Vert
y_{n}\Vert =1$ and $\lim_{n\rightarrow \infty }\Vert x_{n}+y_{n}\Vert /2=1.$
It is known that a uniformly convex Banach space is reflexive and strictly
convex. A Banach space $X$ is said to be \emph{smooth} if the limit 
\begin{equation*}
\lim_{t\rightarrow 0}\frac{\Vert x+ty\Vert -\Vert x\Vert }{t}
\end{equation*}%
exists for all $x,y\in \mathbb{S}_X=\{z\in X:\Vert z\Vert =1\}.$ Finally, a Banach
space $X$ is said to be \emph{uniformly smooth} if its modulus of smoothness 
\begin{equation*}
\rho (\tau )=\sup_{x,y\in \mathbb{S}_X}\left\{ \frac{\Vert x+\tau y\Vert +\Vert x-\tau
y\Vert }{2}-1\right\} ,\,\tau >0,
\end{equation*}%
satisfies the condition $\lim_{\tau \rightarrow 0}\rho (\tau )=0.$ In
uniformly smooth spaces, and only in such spaces, the norm is uniformly Fr%
\'{e}chet differentiable. A uniformly smooth Banach space is smooth. The
converse is true if the Banach space is finite-dimensional. It is known that
the space $L^{p}$ $(1<p<\infty )$ is a uniformly convex and uniformly smooth
Banach space.

\bigskip

Let $\langle \cdot ,\cdot \rangle :X\times X^{\ast }\longrightarrow \mathbb{R%
}$ denote the duality pairing, i.e.,%
\begin{equation*}
\langle x,f\rangle :=f(x).
\end{equation*}%
The normalised duality mapping $J:X\longrightarrow 2^{X^{\ast }}$ is defined
by 
\begin{equation*}
J(x):=\{f\in X^{\ast }:\langle x,f\rangle =\Vert x\Vert ^{2}=(\Vert f\Vert
^{\ast })^{2}\},
\end{equation*}%
and it has the following properties (see \cite%
{Alber}):

\begin{enumerate}
\item[(a)] If $X$ is smooth, the map $J$ is single-valued;

\item[(b)] if $X$ is smooth, then $J$ is norm--to--weak$^{\ast}$ continuous;

\item[(c)] if $X$ is uniformly smooth, then $J$ is uniformly norm--to--norm
continuous on each bounded subset of $X.$
\end{enumerate}

\begin{remark}
In a Hilbert space, the normalised duality mapping is the Riesz map. Notice that  after identifying $X$ with $X^*,$ 
it can be shown (see Proposition~4.8(i) in \cite{Cioranescu}) 
that the normalised duality mapping is the identity operator. 
\end{remark}

Assume that
$(V_{\alpha},\|\cdot\|_{\alpha})$ is a normed space 
for each $\alpha \in D.$ Let $\mathbf{V}_{D_{\Vert \cdot \Vert _{D}}}=\left. _{\Vert
\cdot \Vert _{D}}\bigotimes_{\alpha \in D}V_{\alpha}\right.$ be a reflexive and strictly convex
tensor Banach space such that \eqref{tree_injective_norm} holds. For $%
\mathbf{F}(t,\mathbf{v}_{r}(t))$ in $\mathbf{V}_{D_{\Vert \cdot \Vert _{D}}},
$ with a fixed $t\in I,$ it is known that the set 
\begin{equation*}
\left\{ \dot{\mathbf{v}}_{r}(t):\Vert \dot{\mathbf{v}}_{r}(t)-\mathbf{F}(t,%
\mathbf{v}_{r}(t))\Vert _{D}=\min_{\dot{\mathbf{v}}\in \mathbf{Z}^{(D)}(%
\mathbf{v}_{r}(t))}\Vert \dot{\mathbf{v}}-\mathbf{F}(t,\mathbf{v}%
_{r}(t))\Vert _{D}\right\}
\end{equation*}%
is always a singleton. Let $\mathcal{P}_{\mathbf{v}_{r}(t)}$ be the mapping
from $\mathbf{V}_{D_{\Vert \cdot \Vert _{D}}}$ onto $\mathbf{Z}^{(D)}(\mathbf{v%
}_{r}(t))$ defined by $\dot{\mathbf{v}}_{r}(t):=\mathcal{P}_{\mathbf{v}%
_{r}(t)}(\mathbf{F}(t,\mathbf{v}_{r}(t)))$ if and only if 
\begin{equation*}
\Vert \dot{\mathbf{v}}_{r}(t)-\mathbf{F}(t,\mathbf{v}_{r}(t))\Vert
_{D}=\min_{\dot{\mathbf{v}}\in \mathbf{Z}^{(D)}(\mathbf{v}_{r}(t))}\Vert 
\dot{\mathbf{v}}-\mathbf{F}(t,\mathbf{v}_{r}(t))\Vert _{D}.
\end{equation*}%
It is also called \emph{the metric projection.} The classical
characterisation of the metric projection together with Proposition 2.10 of \cite{Alber}
allows us to state the next result.

\begin{theorem}
\label{th_metric_projection} Assume that
$(V_{\alpha},\|\cdot\|_{\alpha})$ is a normed space 
for each $\alpha \in D.$  Let $\mathbf{V}_{D_{\Vert \cdot \Vert _{D}}}=\left. _{\Vert
\cdot \Vert _{D}}\bigotimes_{\alpha \in D}V_{\alpha}\right.$ be a reflexive and strictly convex
tensor Banach space such that \eqref{tree_injective_norm} holds. Then for each $t\in I$  the following statements are equivalent.
\begin{enumerate}
\item[(a)] $\dot{\mathbf{v}}_{r}(t)=\mathcal{P}_{\mathbf{v}_{r}(t)}(\mathbf{F}(t,\mathbf{%
v}_{r}(t))).$ 
\item[(b)]$
\langle \dot{\mathbf{v}}_{r}(t)-\dot{\mathbf{v}},J(\mathbf{F}(t,\mathbf{v}%
_{r}(t))-\dot{\mathbf{v}}_{r}(t))\rangle \geq 0\text{ for all }\dot{\mathbf{v%
}}\in \mathbf{Z}^{(D)}(\mathbf{v}_{r}(t)).$
\item[(c)] $\langle \dot{\mathbf{v}},J(\mathbf{F}(t,\mathbf{v}%
_{r}(t))-\dot{\mathbf{v}}_{r}(t))\rangle = 0\text{ for all }\dot{\mathbf{v%
}}\in \mathbf{Z}^{(D)}(\mathbf{v}_{r}(t)).$
\end{enumerate}
\end{theorem}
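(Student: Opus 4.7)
The plan is to deduce the three-way equivalence by first invoking the general variational characterisation of the metric projection onto a closed convex subset of a reflexive, strictly convex Banach space (this is exactly Proposition~2.10 of \cite{Alber}, already cited in the paragraph preceding the theorem), and then upgrading the inequality to an equality by exploiting the fact that $\mathbf{Z}^{(D)}(\mathbf{v}_{r}(t))$ is a \emph{linear} subspace, not merely convex.

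More precisely, for the implication (a)$\Leftrightarrow$(b): by Theorem~\ref{closed_linear_subspace} the set $\mathbf{Z}^{(D)}(\mathbf{v}_{r}(t))$ is a closed (in particular, closed convex) subspace of $\mathbf{V}_{D_{\Vert \cdot \Vert _{D}}}$, and this ambient space is reflexive and strictly convex by assumption. Hence Alber's characterisation applies verbatim and yields that $\dot{\mathbf{v}}_{r}(t)$ is the (unique) best approximation of $\mathbf{F}(t,\mathbf{v}_{r}(t))$ in $\mathbf{Z}^{(D)}(\mathbf{v}_{r}(t))$ if and only if the variational inequality in (b) holds.

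For (b)$\Leftrightarrow$(c) the argument will use that $\mathbf{Z}^{(D)}(\mathbf{v}_{r}(t))$ is a linear subspace. Assuming (b), for any $\dot{\mathbf{v}}\in \mathbf{Z}^{(D)}(\mathbf{v}_{r}(t))$ both $\dot{\mathbf{v}}_{r}(t)+\dot{\mathbf{v}}$ and $\dot{\mathbf{v}}_{r}(t)-\dot{\mathbf{v}}$ lie in $\mathbf{Z}^{(D)}(\mathbf{v}_{r}(t))$; substituting them as the generic element of the subspace in (b) gives the two inequalities $\mp\langle \dot{\mathbf{v}},J(\mathbf{F}(t,\mathbf{v}_{r}(t))-\dot{\mathbf{v}}_{r}(t))\rangle\ge 0$, whose conjunction forces the equality (c). Conversely, assuming (c), for every $\dot{\mathbf{v}}\in \mathbf{Z}^{(D)}(\mathbf{v}_{r}(t))$ the difference $\dot{\mathbf{v}}_{r}(t)-\dot{\mathbf{v}}$ still lies in the subspace, so plugging it into the linear functional $J(\mathbf{F}(t,\mathbf{v}_{r}(t))-\dot{\mathbf{v}}_{r}(t))$ yields $0$, which is trivially $\ge 0$, i.e.\ (b).

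There is no substantive obstacle: all the nontrivial ingredients have been set up earlier in the paper (existence and uniqueness of $\mathcal{P}_{\mathbf{v}_{r}(t)}$ in strictly convex reflexive Banach spaces, single-valuedness of $J$ under smoothness, and crucially the fact that the candidate tangent space $\mathbf{Z}^{(D)}(\mathbf{v}_{r}(t))$ is a split closed subspace via Theorem~\ref{closed_linear_subspace}). The only point worth a brief remark is that the passage from the convex variational inequality (b) to the linear orthogonality condition (c) is the standard trick for projection onto subspaces and depends only on $\mathbf{Z}^{(D)}(\mathbf{v}_{r}(t))$ being stable under $\dot{\mathbf{v}}\mapsto -\dot{\mathbf{v}}$ and under translation by $\dot{\mathbf{v}}_{r}(t)$, both of which are immediate from its linear structure.
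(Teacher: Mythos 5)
Your proposal is correct and follows essentially the same route as the paper, which simply invokes the classical variational characterisation of the metric projection together with Proposition~2.10 of \cite{Alber} and leaves the details implicit. Your elaboration of the (b)$\Leftrightarrow$(c) step via the substitutions $\dot{\mathbf{v}}_{r}(t)\pm\dot{\mathbf{w}}$ (legitimate since $\dot{\mathbf{v}}_{r}(t)$ itself lies in the closed subspace $\mathbf{Z}^{(D)}(\mathbf{v}_{r}(t))$ guaranteed by Theorem~\ref{closed_linear_subspace}) is exactly the standard argument the authors intend.
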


An alternative approach is the use of the so-called \emph{generalised
projection operator} (see also \cite{Alber}). To formulate this, we will use
the following framework. Let $\mathbf{V}%
_{D_{\Vert \cdot \Vert _{D}}}$ be a reflexive, strictly convex and smooth
tensor Banach space. Following \cite{KamiTaka}, we can define a function $%
\phi :\mathbf{V}_{D_{\Vert \cdot \Vert _{D}}}\times \mathbf{V}_{D_{\Vert
\cdot \Vert _{D}}}\longrightarrow \mathbb{R}$ by 
\begin{equation*}
\phi (\mathbf{u},\mathbf{v})=\Vert \mathbf{u}\Vert _{D}^{2}-2\langle \mathbf{%
u},J(\mathbf{v})\rangle +\Vert \mathbf{v}\Vert _{D}^{2},
\end{equation*}%
where $\langle \cdot ,\cdot \rangle $ denotes the duality pairing and $J$ is the
normalised duality mapping. It is known that the set 
\begin{equation*}
\left\{ \dot{\mathbf{v}}_{r}(t):\phi (\dot{\mathbf{v}}_{r}(t),\mathbf{F}(t,%
\mathbf{v}_{r}(t)))=\min_{\dot{\mathbf{v}}\in \mathbf{Z}^{(D)}(\mathbf{v}%
_{r}(t))}\phi (\dot{\mathbf{v}},\mathbf{F}(t,\mathbf{v}_{r}(t)))\right\}
\end{equation*}%
is always a singleton. It allows us to define a map $\Pi _{\mathbf{v}%
_{r}(t)}:\mathbf{V}_{D_{\Vert \cdot \Vert _{D}}}\longrightarrow \mathbf{Z}%
^{(D)}(\mathbf{v}_{r}(t)) $ by $\dot{\mathbf{v}}_{r}(t):=\Pi _{\mathbf{v}%
_{r}(t)}(\mathbf{F}(t,\mathbf{v}_{r}(t)))$ if and only if 
\begin{equation*}
\phi (\dot{\mathbf{v}}_{r}(t),\mathbf{F}(t,\mathbf{v}_{r}(t)))=\min_{\dot{%
\mathbf{v}}\in \mathbf{Z}^{(D)}(\mathbf{v}_{r}(t))}\phi (\dot{\mathbf{v}}%
,\mathbf{F}(t,\mathbf{v}_{r}(t))).
\end{equation*}%
The map $\Pi _{\mathbf{v}_{r}(t)}$ is called \emph{the generalised
projection.} It coincides with the metric projection when $\mathbf{V}%
_{D_{\Vert \cdot \Vert _{D}}}$ is a Hilbert space.

\begin{remark}
We point out that, in general, the operators $\mathcal{P}_{\mathbf{v}_r(t)} $
and $\Pi_{\mathbf{v}_r(t)}$ are nonlinear in Banach (not Hilbert) spaces.
\end{remark}

Again, a classical characterisation of the generalised projection gives us
the following theorem.

\begin{theorem}
\label{th_generalized_projection} Assume that
$(V_{\alpha},\|\cdot\|_{\alpha})$ is a normed space 
for each $\alpha \in D.$  Let $\mathbf{V}_{D_{\Vert \cdot \Vert _{D}}}=\left. _{\Vert
\cdot \Vert _{D}}\bigotimes_{\alpha \in D}V_{\alpha}\right.$ be a reflexive and strictly convex
tensor Banach space such that \eqref{tree_injective_norm} holds. Then for each $t\in I$ we
have 
\begin{equation*}
\dot{\mathbf{v}}_{r}(t)=\Pi _{\mathbf{v}_{r}(t)}(\mathbf{F}(t,\mathbf{v}%
_{r}(t)))
\end{equation*}%
if and only if 
\begin{equation*}
\langle \dot{\mathbf{v}}_{r}(t)-\dot{\mathbf{v}},J(\mathbf{F}(t,\mathbf{v}%
_{r}(t)))-J(\dot{\mathbf{v}}_{r}(t))\rangle \geq 0\text{ for all }\dot{%
\mathbf{v}}\in \mathbf{Z}^{(D)}(\mathbf{v}_{r}(t)).
\end{equation*}
\end{theorem}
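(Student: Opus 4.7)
The plan is to derive the characterisation as the first-order optimality condition for minimising the convex objective $\mathbf{u} \mapsto \phi(\mathbf{u},\mathbf{y})$ over the closed subspace $C := \mathbf{Z}^{(D)}(\mathbf{v}_r(t))$, where $\mathbf{y} := \mathbf{F}(t,\mathbf{v}_r(t))$. The space $C$ is linear (hence convex), and by Theorem~\ref{closed_linear_subspace} it belongs to $\mathbb{G}(\mathbf{V}_{D_{\|\cdot\|_D}})$, so in particular it is norm-closed. Because $\mathbf{V}_{D_{\|\cdot\|_D}}$ is reflexive and strictly convex, the minimisation defining $\Pi_{\mathbf{v}_r(t)}(\mathbf{y})$ admits a unique solution $\dot{\mathbf{v}}_r(t)$, as already noted above the statement.

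The next step is to compute the Gâteaux derivative of the functional
\begin{equation*}
g(\mathbf{u}) \;:=\; \phi(\mathbf{u},\mathbf{y}) \;=\; \|\mathbf{u}\|_D^2 \;-\; 2\langle \mathbf{u}, J(\mathbf{y})\rangle \;+\; \|\mathbf{y}\|_D^2 .
\end{equation*}
Under the standing smoothness hypothesis on $\mathbf{V}_{D_{\|\cdot\|_D}}$ (which is explicitly invoked just before the statement via the single-valuedness of $J$), the map $\mathbf{u}\mapsto \tfrac12\|\mathbf{u}\|_D^2$ is Gâteaux differentiable with derivative $J(\mathbf{u})$; the second term is linear in $\mathbf{u}$; the third is constant. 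Hence $g$ is convex and continuously Gâteaux differentiable with
\begin{equation*}
Dg(\mathbf{u}) \;=\; 2\bigl(J(\mathbf{u}) - J(\mathbf{y})\bigr).
\end{equation*}

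Now I would apply the standard first-order characterisation of minimisers of a convex differentiable function over a convex set: $\dot{\mathbf{v}}_r(t)$ is the minimiser of $g$ over $C$ if and only if
\begin{equation*}
\bigl\langle \dot{\mathbf{v}} - \dot{\mathbf{v}}_r(t),\; Dg(\dot{\mathbf{v}}_r(t)) \bigr\rangle \;\geq\; 0 \qquad \text{for all } \dot{\mathbf{v}} \in C.
\end{equation*}
Substituting the formula for $Dg$, dividing by $2$, and flipping the sign of the first factor, this is exactly
\begin{equation*}
\bigl\langle \dot{\mathbf{v}}_r(t) - \dot{\mathbf{v}},\; J(\mathbf{y}) - J(\dot{\mathbf{v}}_r(t)) \bigr\rangle \;\geq\; 0 \qquad \text{for all } \dot{\mathbf{v}} \in C,
\end{equation*}
which is the claimed inequality. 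The forward implication follows from necessity of this condition at a minimiser; the converse follows from convexity of $g$, which guarantees the first-order condition is also sufficient for global minimality. The main (minor) technical point to be careful about is invoking smoothness to ensure $J$ is single-valued and that $\|\cdot\|_D^2$ is Gâteaux differentiable; beyond that, the argument is a direct translation of the standard convex-optimality lemma to the functional $\phi(\cdot,\mathbf{y})$, exactly paralleling the Hilbert-space case recovered when $J=\mathrm{id}$ and $\phi(\mathbf{u},\mathbf{y})=\|\mathbf{u}-\mathbf{y}\|_D^2$.
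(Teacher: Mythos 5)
Your argument is correct, and it in fact supplies details that the paper itself omits: for Theorem \ref{th_generalized_projection} the authors give no proof at all, appealing only to ``a classical characterisation of the generalised projection'' from Alber's work, exactly as they do for Theorem \ref{th_metric_projection}. What you do is reprove that classical characterisation in situ: you observe that $g(\mathbf{u})=\phi(\mathbf{u},\mathbf{y})$ is convex (square of a norm, plus a term linear in $\mathbf{u}$ since $\langle \mathbf{u},J(\mathbf{y})\rangle = J(\mathbf{y})(\mathbf{u})$, plus a constant), that in a smooth space $\tfrac12\|\cdot\|_D^2$ is G\^ateaux differentiable with derivative $J$, and then invoke the standard first-order optimality condition for a convex differentiable functional over a convex set; the sign bookkeeping giving $\langle \dot{\mathbf{v}}_r(t)-\dot{\mathbf{v}},\,J(\mathbf{F}(t,\mathbf{v}_r(t)))-J(\dot{\mathbf{v}}_r(t))\rangle\geq 0$ checks out. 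Two remarks. First, you are right to flag smoothness: the theorem as stated lists only reflexivity and strict convexity, but single-valuedness of $J$ and differentiability of $\|\cdot\|_D^2$ require the smoothness hypothesis that the paper imposes in the paragraph introducing $\Pi_{\mathbf{v}_r(t)}$; your proof makes explicit that this standing assumption is genuinely used. Second, since $\mathbf{Z}^{(D)}(\mathbf{v}_r(t))$ is a linear subspace rather than a general convex set, your variational inequality could be sharpened to the equality $\langle \dot{\mathbf{v}},\,J(\mathbf{F}(t,\mathbf{v}_r(t)))-J(\dot{\mathbf{v}}_r(t))\rangle=0$ for all $\dot{\mathbf{v}}$ in the subspace (replace $\dot{\mathbf{v}}$ by its reflection through $\dot{\mathbf{v}}_r(t)$), mirroring part (c) of Theorem \ref{th_metric_projection}; the inequality form you derive is nonetheless exactly what the statement asks for.
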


\subsection{The time--dependent Hartree method}

Assume that $(V_{\alpha},\|\cdot\|_{\alpha})$ is a Banach space 
for each $\alpha \in D.$  Let $\mathbf{V}_{\|\cdot\|}=\left. _{\Vert
\cdot \Vert _{D}}\bigotimes_{\alpha \in D}V_{\alpha}\right.$ 
be a reflexive and strictly convex tensor Banach space such 
that \eqref{tree_injective_norm} holds. Let us consider 
in $\mathbf{V}_{\|\cdot\|}$ a flow generated by a densely
defined operator $A \in L(\mathbf{V}_{\|\cdot\|},\mathbf{V}_{\|\cdot\|}).$
More precisely, there exists a collection of bijective maps $\boldsymbol{%
\varphi}_t:\mathcal{D}(A) \longrightarrow \mathcal{D}(A),$ here $\mathcal{D}%
(A)$ denotes the domain of $A,$ satisfying

\begin{enumerate}
\item[(i)] $\boldsymbol{\varphi}_0 = \mathbf{id},$

\item[(ii)] $\boldsymbol{\varphi}_{t+s} = \boldsymbol{\varphi}_t \circ 
\boldsymbol{\varphi}_s,$ and

\item[(iii)] for $\mathbf{u}_0 \in \mathcal{D}(A),$ the map $t \mapsto 
\boldsymbol{\varphi}_t$ is differentiable as a curve in $\mathbf{V}%
_{\|\cdot\|},$ and $\mathbf{u}(t):= \boldsymbol{\varphi}_t(\mathbf{u}_0)$
satisfies 
\begin{align*}
\dot{\mathbf{u}} & = A \mathbf{u}, \\
\mathbf{u}(0) & = \mathbf{u}_0.
\end{align*}
\end{enumerate}

In this framework we want to study the approximation of a solution $\mathbf{u%
}(t)=\boldsymbol{\varphi }_{t}(\mathbf{u}_{0})\in \mathbf{V}_{\Vert \cdot
\Vert }$ by a curve $\mathbf{v}_{r}(t):=\lambda (t)\otimes
_{\alpha \in D}v_{\alpha}(t) $ in the Banach manifold $\mathfrak{M}_{(1,\ldots ,1)}(%
\mathbf{V}),$ also called in \cite{Lubish} the \emph{Hartree manifold.} The
time--dependent Hartree method consists in the use of the Dirac--Frenkel
variational principle on the Hartree manifold. More precisely, we want to
solve the following reduced order model: 
\begin{align*}
\dot{\mathbf{v}}_{r}(t)& =\mathcal{P}_{\mathbf{v}_{r}(t)}(A\mathbf{v}_{r}(t))%
\text{ for }t\in I, \\
\mathbf{v}_{r}(0)& =\mathbf{v}_{0},
\end{align*}%
with $\mathbf{v}_{0}=\lambda _{0}\otimes _{\alpha \in D}v_{0}^{(\alpha)}\in \mathfrak{M}%
_{(1,\ldots ,1)}(\mathbf{V})$ being an approximation of $\mathbf{u}_{0}.$ 
By using the characterisation of the metric
projection in a Banach space, for each $t>0$ we would like to find $\dot{%
\mathbf{v}}_{r}(t)\in \mathrm{T}_{\mathbf{v}_{r}(t)}\mathfrak{i}\left( 
\mathbb{T}_{\mathbf{v}_{r}(t)}(\mathfrak{M}_{(1,\ldots ,1)}(\mathbf{V}%
))\right) $ such that 
\begin{align}
\langle \dot{\mathbf{v}}, J(\dot{\mathbf{v}}_{r}(t)-A\mathbf{v}_{r}(t))\rangle & =0\text{ for all }\dot{\mathbf{v}}\in \mathrm{T}_{\mathbf{v}%
_{r}(t)}\mathfrak{i}\left( \mathbb{T}_{\mathbf{v}_{r}(t)}(\mathfrak{M}%
_{(1,\ldots ,1)}(\mathbf{V}))\right),  \label{eq:11}
\end{align}
\begin{align*}
\mathbf{v}_{r}(0)& =\mathbf{v}_{0}=\lambda _{0}\otimes _{\alpha \in D}v_{0}^{(\alpha)}.
\end{align*}%
A first result is the following Lemma.

\begin{lemma}
\label{previous_HF} Let $\mathbf{v}\in \mathcal{C}^{1}(I,\mathcal{U}(\mathbf{%
v}_{0})),$ where $\mathbf{v}(0)=\mathbf{v}_{0}\in \mathfrak{M}_{(1,\ldots
,1)}(\mathbf{V})$ and $(\mathcal{U}(\mathbf{v}_{0}),\Theta _{\mathbf{v}%
_{0}}) $ is a local chart at $\mathbf{v}_{0}$ in $\mathfrak{M}_{(1,\ldots
,1)}(\mathbf{V}).$ Assume that $\mathbf{v}$ is also a $\mathcal{C}^{1}$%
-morphism between the manifolds $I\subset \mathbb{R}$ and $\mathcal{U}(%
\mathbf{v}_{0})\subset \mathfrak{M}_{(1,\ldots ,1)}(\mathbf{V})$ such that $%
\mathbf{v}(t)=\lambda (t)\bigotimes_{\alpha \in D}v_{\alpha}(t)$ for some $\lambda \in 
\mathcal{C}^{1}(I,\mathbb{R})$ and $v_{\alpha}\in \mathcal{C}^{1}(I,V_{\alpha})$ for $%
\alpha \in D.$ Then 
\begin{equation}  \label{curve_derivative}
\dot{\mathbf{v}}(t)=\dot{\lambda}(t)\bigotimes_{\alpha \in D}v_{\alpha}(t)+\lambda
(t)\sum_{\alpha \in D}\dot{v}_{\alpha}(t)\otimes \bigotimes_{\substack{\beta \in D\\\beta \neq \alpha}}v_{\beta}(t)=\mathrm{T%
}_{\mathbf{v}(t)}\mathfrak{i}(\mathrm{T}_{t}\mathbf{v}(1)),\footnote{Observe that the derivative at $t$
of a map $\mathbf{v}:I \rightarrow \mathfrak{M}_{(1,\ldots
,1)}(\mathbf{V})$ considered as a morphism between manifolds is given by a 
linear map $\mathrm{T}_t \mathbf{v}:\mathbb{R} \rightarrow
\mathbb{T}_{\mathbf{v}(t)}(\mathfrak{M}_{(1,\ldots
,1)}(\mathbf{V}))$ which is characterised by the fact that $\mathrm{T}_t \mathbf{v}(\dot{\mu}) = \dot{\mu} \mathrm{T}_t \mathbf{v}(1)$ holds
for all $\dot{\mu} \in \mathbb{R}.$ It allows us to identify the linear map $\mathrm{T}_t \mathbf{v}$
with the vector $\mathrm{T}_t \mathbf{v}(1),$ that represents the derivative of the curve $\mathbf{v}(t)$ by using local coordinates which is usually written as
$\dot{\mathbf{v}}(t)$ by abuse of notation.
}
\end{equation}%
where $\dot{\lambda}(t)\in \mathbb{R}$ and $\dot{v}_{\alpha}(t) \in W_{\alpha}^{\min}(\mathbf{v}_0)$ for $t\in I$ and $\alpha \in D.$
Moreover, if we assume that for each $\alpha \in D,$
$V_{\alpha}$ is a Hilbert space and $v_{\alpha}(t)\in \mathbb{S}_{V_{\alpha}},$ i.e., $\Vert v_{\alpha}(t)\Vert
_{\alpha}=1$ for $t\in I,$ then $\dot{v}_{\alpha}(t)\in \mathbb{T}%
_{v_{\alpha}(t)}(\mathbb{S}_{V_{\alpha}})$ for $t\in I$ and $\alpha \in D.$
\end{lemma}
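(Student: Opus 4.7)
The plan is to split the statement into four ingredients and handle each using the machinery already set up in Section~\ref{sec:banach_manifold_tucker_fixed_rank}. The main work consists in (i) producing the Leibniz formula for $\dot{\mathbf{v}}(t)$, (ii) matching it with the tangent-map formula from Proposition~\ref{characterization_tangent_map}, (iii) pinning down the canonical decomposition so that $\dot v_\alpha(t)\in W_\alpha^{\min}(\mathbf{v}_0)$, and (iv) disposing of the sphere claim.

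First I would observe that by Proposition~\ref{Diff_otimes} the $(d{+}1)$-linear map $(\mu,w_1,\dots,w_d)\mapsto \mu\,\bigotimes_{\alpha\in D}w_\alpha$ from $\mathbb{R}\times\prod_\alpha V_\alpha$ into $\mathbf{V}_{D_{\|\cdot\|_D}}$ is continuous (the scalar factor preserves continuity of the tensor product), hence $\mathcal{C}^\infty$-Fr\'echet differentiable, and applying the chain rule to $t\mapsto(\lambda(t),v_1(t),\dots,v_d(t))$ yields the displayed Leibniz expansion for $\dot{\mathbf{v}}(t)$.

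Next, to obtain the identification with $\mathrm{T}_{\mathbf{v}(t)}\mathfrak{i}(\mathrm{T}_t\mathbf{v}(1))$, I would use the chart $\xi_{\mathbf{v}_0}$ of $\mathcal{U}(\mathbf{v}_0)$ described in Section~\ref{manifold_structure}. For $\mathfrak{r}=(1,\ldots,1)$, Lemma~\ref{element} writes any $\mathbf{w}\in\mathcal{U}(\mathbf{v}_0)$ uniquely as $E\bigotimes_{\alpha\in D}\exp(L_\alpha)(v_0^{(\alpha)})=E\bigotimes_{\alpha\in D}\bigl(v_0^{(\alpha)}+L_\alpha(v_0^{(\alpha)})\bigr)$ with $L_\alpha\in\mathcal{L}(U_\alpha^{\min}(\mathbf{v}_0),W_\alpha^{\min}(\mathbf{v}_0))$ and $E\in\mathbb{R}_*$. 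Applying this to $\mathbf{v}(t)$ gives a canonical representation $\lambda(t)=E(t)$ and $v_\alpha(t)=v_0^{(\alpha)}+L_\alpha(t)(v_0^{(\alpha)})$, which is the representation I take (other admissible factorizations differ from this one by multiplication by scalars of product one, and the derivative formula is invariant under that gauge). Differentiating in $t$ and using that $W_\alpha^{\min}(\mathbf{v}_0)$ is a fixed closed subspace gives $\dot v_\alpha(t)=\dot L_\alpha(t)(v_0^{(\alpha)})\in W_\alpha^{\min}(\mathbf{v}_0)$; moreover $\mathrm{T}_t\mathbf{v}(1)=(\dot L_1(t),\ldots,\dot L_d(t),\dot E(t))\in\mathbb{T}_{\mathbf{v}(t)}(\mathfrak{M}_{\mathfrak{r}}(\mathbf{V}_D))$, and feeding these into Proposition~\ref{characterization_tangent_map}(b) reproduces the Leibniz expansion, establishing the identity $\dot{\mathbf{v}}(t)=\mathrm{T}_{\mathbf{v}(t)}\mathfrak{i}(\mathrm{T}_t\mathbf{v}(1))$.

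The sphere statement is the easy tail: in the Hilbert setting, differentiating $\langle v_\alpha(t),v_\alpha(t)\rangle_\alpha=1$ gives $\langle\dot v_\alpha(t),v_\alpha(t)\rangle_\alpha=0$, so by Example~\ref{unit_sphere} $\dot v_\alpha(t)\in\mathbb{T}_{v_\alpha(t)}(\mathbb{S}_{V_\alpha})$ for every $\alpha\in D$.

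The one genuinely subtle point, and the place I expect the main obstacle, is reconciling the \emph{a priori} non-unique decomposition $\mathbf{v}(t)=\lambda(t)\bigotimes_{\alpha}v_\alpha(t)$ with the chart-induced canonical one; the free gauge is the $d$-parameter rescaling $(\lambda,v_\alpha)\mapsto(\lambda\prod_\alpha c_\alpha^{-1},c_\alpha v_\alpha)$. The argument must show that by absorbing these scalars into $\lambda$ we can always reduce to the chart representation above, so that the conclusion $\dot v_\alpha(t)\in W_\alpha^{\min}(\mathbf{v}_0)$ holds for the chosen factorization. Once this canonical normalization is fixed, all four conclusions follow routinely from the differentiability results of Section~\ref{manifold_structure}.
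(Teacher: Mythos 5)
Your proposal is correct and follows essentially the same route as the paper: both read off the chart representation $\mathbf{v}(t)=\lambda(t)\bigotimes_{\alpha}(id_{\alpha}+L_{\alpha}(t))(v_{0}^{(\alpha)})$, set $v_{\alpha}(t):=(id_{\alpha}+L_{\alpha}(t))(v_{0}^{(\alpha)})$ so that $\dot v_{\alpha}(t)=\dot L_{\alpha}(t)(v_{0}^{(\alpha)})\in W_{\alpha}^{\min}(\mathbf{v}_{0})$, match the result against Proposition~\ref{characterization_tangent_map}(b), and dispose of the sphere claim by differentiating $\Vert v_{\alpha}(t)\Vert_{\alpha}^{2}=1$. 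The gauge ambiguity you flag is real but is resolved exactly as you suggest: the paper simply takes the chart-induced factorization as the definition of the $v_{\alpha}(t)$, for which the containment $\dot v_{\alpha}(t)\in W_{\alpha}^{\min}(\mathbf{v}_{0})$ holds.
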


\begin{proof}
First of all, we recall that by the construction of $\mathcal{U}(\mathbf{v}%
_{0})$ it follows that $W_{\alpha}^{\min }(\mathbf{v}_{0})=W_{\alpha}^{\min }(\mathbf{v%
}(t))$ and that $U_{\alpha}^{\min }(\mathbf{v}_{0})=\mathrm{span}\{v_{0}^{(\alpha)}\}$
is linearly isomorphic to $U_{\alpha}^{\min }(\mathbf{v}(t))$ for all $t\in I$
and $\alpha \in D.$ Assume $\Theta _{\mathbf{v}_{0}}(\mathbf{v}%
(t))=(\lambda (t),L_{1}(t),\ldots ,L_{d}(t)),$ i.e., 
\begin{equation*}
\mathbf{v}(t):=\lambda (t)\bigotimes_{\alpha \in D}\left( id_{\alpha}+L_{\alpha}(t)\right)
(v_{0}^{(\alpha)}),
\end{equation*}%
where $\lambda \in \mathcal{C}^{1}(I,\mathbb{R}\setminus \{0\}),$ $L_{\alpha}\in 
\mathcal{C}^{1}(I,\mathcal{L}(U_{\alpha}^{\min }(\mathbf{v}_{0}),W_{\alpha}^{\min }(%
\mathbf{v}_{0})))$ and $(id_{\alpha}+L_{\alpha}(t))(v_{0}^{(\alpha)})\in U_{\alpha}^{\min }(%
\mathbf{v}(t))$ for $\alpha \in D.$ We point out that the linear map $%
\mathrm{T}_{t}\mathbf{v}:\mathbb{R}\rightarrow \mathbb{T}_{\mathbf{v}(t)}(%
\mathfrak{M}_{(1,\ldots ,1)}(\mathbf{V}))$ is characterised by 
\begin{equation}
\mathrm{T}_{t}\mathbf{v}(1)=(\Theta _{\mathbf{v}_{0}}\circ \mathbf{v}%
)^{\prime }(t)=(\dot{\lambda}(t),\dot{L}_{1}(t),\ldots ,\dot{L}_{d}(t)).
\label{tangent_derivative}
\end{equation}%
Since $L_{\alpha}\in \mathcal{C}^{1}(I,\mathcal{L}(U_{\alpha}^{\min }(\mathbf{v}%
_{0}),W_{\alpha}^{\min }(\mathbf{v}_{0})))$ then $\dot{L}_{\alpha}\in \mathcal{C}%
^{0}(I,\mathcal{L}(U_{\alpha}^{\min }(\mathbf{v}_{0}),W_{\alpha}^{\min }(\mathbf{v}%
_{0}))).$ Observe that $U_{\alpha}^{\min }(\mathbf{v}_{0})$ and $U_{\alpha}^{\min }(%
\mathbf{v}(t))$ have $W_{\alpha}^{\min }(\mathbf{v}_{0})$ as a common complement.
From Lemma~\ref{Char_Projections} we know that 
\begin{equation*}
P_{U_{\alpha}^{\min }(\mathbf{v}_{0})\oplus W_{\alpha}^{\min }(\mathbf{v}%
_{0})}|_{U_{\alpha}^{\min }(\mathbf{v}(t))}:U_{\alpha}^{\min }(\mathbf{v}%
(t))\longrightarrow U_{\alpha}^{\min }(\mathbf{v}_{0})
\end{equation*}%
is a linear isomorphism. We can write 
\begin{equation*}
L_{\alpha}(t)=L_{\alpha}(t)P_{U_{\alpha}^{\min }(\mathbf{v}_{0})\oplus W_{\alpha}^{\min }(%
\mathbf{v}_{0})}\text{ and }\dot{L}_{\alpha}(t)=\dot{L}_{\alpha}(t)P_{U_{\alpha}^{\min }(%
\mathbf{v}_{0})\oplus W_{\alpha}^{\min }(\mathbf{v}_{0})},
\end{equation*}%
and then in \eqref{tangent_derivative} we identify $\dot{L}_{\alpha}(t)\in 
\mathcal{L}(U_{\alpha}^{\min }(\mathbf{v}_{0}),W_{\alpha}^{\min }(\mathbf{v}_{0})))$
with 
\begin{equation*}
\dot{L}_{\alpha}(t)P_{U_{\alpha}^{\min }(\mathbf{v}_{0})\oplus W_{\alpha}^{\min }(\mathbf{v}%
_{0})}|_{U_{\alpha}^{\min }(\mathbf{v}(t))}\in \mathcal{L}(U_{\alpha}^{\min }(\mathbf{v%
}(t)),W_{\alpha}^{\min }(\mathbf{v}_{0}))).
\end{equation*}%
Introduce $v_{\alpha}(t):=(id_{\alpha}+L_{\alpha}(t))(v_{0}^{(\alpha)})$ for $\alpha \in D.$
Then 
\begin{equation*}
\dot{L}_{\alpha}(t)(v_{\alpha}(t))=\dot{L}_{\alpha}(t)P_{U_{\alpha}^{\min }(\mathbf{v}%
_{0})\oplus W_{\alpha}^{\min }(\mathbf{v}_{0})}|_{U_{\alpha}^{\min }(\mathbf{v}%
(t))}(v_{0}^{(\alpha)}+L_{\alpha}(t)(v_{0}^{(\alpha)}))=\dot{L}_{\alpha}(t)(v_{0}^{(\alpha)})
\end{equation*}%
holds for all $t\in I$ and $\alpha \in D.$ Hence 
\begin{equation}
\dot{v}_{\alpha}(t)=\dot{L}_{\alpha}(t)(v_{0}^{(\alpha)})=\dot{L}_{\alpha}(t)(v_{\alpha}(t))
\label{derivative}
\end{equation}%
holds for all $t\in I$ and $\alpha \in D.$ From Lemma \ref%
{characterization_tangent_map}(b) and \eqref{tangent_derivative}, we have 
\begin{equation*}
\mathrm{T}_{\mathbf{v}(t)}\mathfrak{i}(\mathrm{T}_{t}\mathbf{v}(1))=\dot{%
\lambda}(t)\bigotimes_{\alpha \in D}v_{\alpha}(t)+\lambda (t)\sum_{\alpha \in D}\dot{L}%
_{\alpha}(t)(v_{\alpha}(t))\otimes \bigotimes_{\beta\neq \alpha}v_{\beta}(t),
\end{equation*}%
and, by using \eqref{derivative} for $\mathbf{v}(t)=\lambda
(t)\bigotimes_{\alpha \in D}v_{\alpha}(t),$ we obtain \eqref{curve_derivative}.

To prove the second statement, recall that $U_{\alpha}^{\min }(\mathbf{v}(t))=%
\mathrm{span}\,\{v_{\alpha}(t)\}$ and $V_{\alpha}=U_{\alpha}^{\min }(\mathbf{v}(t))\oplus
W_{\alpha}^{\min }(\mathbf{v}_{0})$ for $\alpha \in D.$ Let
$\left\langle \cdot,\cdot\right\rangle _{\alpha}$ be the scalar product defined
on $V_{\alpha}$ $\left( \alpha \in D \right).$
Then we consider 
\begin{equation*}
W_{\alpha}^{\min }(\mathbf{v}_{0})=\mathrm{span}\,\{v_{\alpha}(t)\}^{\bot }=\{u_{\alpha}\in
V_{\alpha}:\langle u_{\alpha},v_{\alpha}(t)\rangle _{\alpha}=0\}\text{ for }\alpha \in D,
\end{equation*}%
and hence $\langle \dot{v}_{\alpha}(t)),v_{\alpha}(t)\rangle _{\alpha}=0$ holds for $\alpha\in D.$ From Remark~\ref{unit_sphere}, we have $(\dot{v}_{1}(t),\ldots ,%
\dot{v}_{d}(t))\in \mathcal{C}(I,%
\mathop{\mathchoice{\raise-0.22em\hbox{\huge
$\times$}} {\raise-0.05em\hbox{\Large $\times$}}{\hbox{\large
$\times$}}{\times}}_{\alpha \in D}\mathbb{T}_{v_{\alpha}(t)}(\mathbb{S}_{V_{\alpha}})),$
because $W_{\alpha}^{\min }(\mathbf{v}_{0})=\mathbb{T}_{v_{\alpha}(t)}(\mathbb{S}%
_{V_{\alpha}})$ for $\alpha \in D.$
\end{proof}

\bigskip

The next result, where we assume that $\lambda(t) = \lambda_0 = 1$
holds for all time $t,$ gives us the time dependent Hartree method 
on tensor Banach (not necessarily Hilbert) spaces (compare with
Theorem~3.1 in \cite{Lubish}).

\begin{theorem}[Time dependent Hartree method on tensor Banach spaces]
The solution $\mathbf{v}_{r}(t)=\bigotimes_{\alpha \in D}v_{\alpha}(t),$ with 
$(v_{1}(t),\ldots ,v_{d}(t))\in 
\mathop{\mathchoice{\raise-0.22em\hbox{\huge
$\times$}} {\raise-0.05em\hbox{\Large $\times$}}{\hbox{\large
$\times$}}{\times}}_{\alpha \in D} V_{\alpha}$, of 
\begin{align*}
\dot{\mathbf{v}}_{r}(t)& =\mathcal{P}_{\mathbf{v}_{r}(t)}(A\mathbf{v}_{r}(t))%
\text{ for }t\in I, \\
\mathbf{v}_{r}(0)& =\mathbf{v}_{0},
\end{align*}%
satisfies 
\begin{equation*}
\left\langle \dot{w}%
_{\alpha}\otimes (\bigotimes_{\substack{ \beta \in D \\ \beta \neq \alpha}}v_{\beta}(t)),
J(\dot{\mathbf{v}}_{r}(t)-A\mathbf{v}_{r}(t))
\right\rangle =0 \text{\quad for all }\dot{w}_{\alpha}\in V_{\alpha},\quad \alpha \in D.
\end{equation*}%
\end{theorem}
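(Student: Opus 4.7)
The plan is essentially an immediate corollary of Theorem \ref{th_metric_projection}(c) combined with an unpacking of the tangent-space description $\mathbf{Z}^{(D)}(\mathbf{v}_{r}(t))$ in the rank-$(1,\ldots,1)$ case. First I would apply part (c) of Theorem \ref{th_metric_projection} with $\mathbf{F}(t,\mathbf{v}_r(t))=A\mathbf{v}_r(t)$: since by hypothesis $\dot{\mathbf{v}}_r(t)=\mathcal{P}_{\mathbf{v}_r(t)}(A\mathbf{v}_r(t))$, one obtains
$$\langle \dot{\mathbf{v}},\,J(A\mathbf{v}_r(t)-\dot{\mathbf{v}}_r(t))\rangle=0 \quad \text{for all } \dot{\mathbf{v}}\in\mathbf{Z}^{(D)}(\mathbf{v}_r(t)).$$
Since $J$ is odd ($J(-x)=-J(x)$), this is equivalent to the analogous identity with $J(\dot{\mathbf{v}}_r(t)-A\mathbf{v}_r(t))$ in place of $J(A\mathbf{v}_r(t)-\dot{\mathbf{v}}_r(t))$, matching the sign convention in the statement.

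Second, I would specialise the definition of $\mathbf{Z}^{(D)}(\mathbf{v}_r(t))$ to the rank-$(1,\ldots,1)$ situation. For $\mathbf{v}_r(t)=\bigotimes_{\alpha\in D}v_\alpha(t)$, Remark \ref{identification} together with Proposition \ref{minimal} yields $U_{\alpha}^{\min}(\mathbf{v}_r(t))=\mathrm{span}\{v_\alpha(t)\}$ and $U_{D\setminus\{\alpha\}}^{\min}(\mathbf{v}_r(t))=\mathrm{span}\{\bigotimes_{\beta\neq\alpha}v_\beta(t)\}$, with the direct-sum decomposition $V_\alpha=U_\alpha^{\min}(\mathbf{v}_r(t))\oplus W_\alpha^{\min}(\mathbf{v}_r(t))$.

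Third, I would check that for each fixed $\alpha\in D$ and every $\dot{w}_\alpha\in V_\alpha$ the element $\dot{w}_\alpha\otimes\bigotimes_{\beta\neq\alpha}v_\beta(t)$ actually lies in $\mathbf{Z}^{(D)}(\mathbf{v}_r(t))$. Decomposing $\dot{w}_\alpha=c_\alpha v_\alpha(t)+w_\alpha^\perp$ with $w_\alpha^\perp\in W_\alpha^{\min}(\mathbf{v}_r(t))$, one splits
$$\dot{w}_\alpha\otimes\bigotimes_{\beta\neq\alpha}v_\beta(t)=c_\alpha\bigotimes_{\gamma\in D}v_\gamma(t)+w_\alpha^\perp\otimes\bigotimes_{\beta\neq\alpha}v_\beta(t),$$
whose first summand lies in $\bigotimes^a_{\gamma\in D}U_\gamma^{\min}(\mathbf{v}_r(t))$ and whose second summand lies in $W_\alpha^{\min}(\mathbf{v}_r(t))\otimes_a U_{D\setminus\{\alpha\}}^{\min}(\mathbf{v}_r(t))$; by the very definition of $\mathbf{Z}^{(D)}(\mathbf{v}_r(t))$ the sum belongs to that subspace. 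Substituting these admissible test elements into the identity from the first step produces exactly the formula claimed in the theorem.

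There is no substantial obstacle: the result is a transparent specialisation of the abstract variational characterisation of the metric projection to the tangent directions of the Hartree manifold. The only point requiring care is step three, where one must verify that $V_\alpha$-valued perturbations $\dot{w}_\alpha$ (and not just $W_\alpha^{\min}$-valued ones) yield admissible tangent vectors; this is exactly the role of the extra summand $\bigotimes^a_{\gamma\in D}U_\gamma^{\min}(\mathbf{v}_r(t))$ in the decomposition of $\mathbf{Z}^{(D)}(\mathbf{v}_r(t))$, which absorbs the component along $v_\alpha(t)$.
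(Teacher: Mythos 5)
Your proposal is correct and follows essentially the same route as the paper: both arguments start from the variational characterisation of the metric projection (the paper's displayed condition \eqref{eq:11}, i.e.\ Theorem \ref{th_metric_projection}(c)), identify the admissible test directions with $\mathbf{Z}^{(D)}(\mathbf{v}_r(t))$ in the rank-$(1,\ldots,1)$ case, and use the splitting $V_\alpha=U_\alpha^{\min}\oplus W_\alpha^{\min}$ to pass from $W_\alpha^{\min}$-valued to arbitrary $V_\alpha$-valued perturbations $\dot{w}_\alpha$. The only cosmetic difference is that the paper first tests separately against the direction $\bigotimes_{\alpha\in D}v_\alpha(t)$ and against $W_\alpha^{\min}$-directions and then recombines, whereas you decompose $\dot{w}_\alpha$ up front and verify membership of the resulting test vector in $\mathbf{Z}^{(D)}(\mathbf{v}_r(t))$ directly.
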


\begin{proof}
From Lemma~\ref{previous_HF} we have $\mathbb{T}_{\mathbf{v}_{r}(t)}\left( 
\mathfrak{M}_{(1,\ldots ,1)}(\mathbf{V})\right) =\mathbb{R}\times 
\mathop{\mathchoice{\raise-0.22em\hbox{\huge
$\times$}} {\raise-0.05em\hbox{\Large $\times$}}{\hbox{\large
$\times$}}{\times}}_{\alpha \in D}
W_{\alpha}^{\min}(\mathbf{v}_0),$
Thus, for each $\dot{\mathbf{w}} \in \mathrm{T}_{\mathbf{v}(t)}i\left( 
\mathbb{T}_{\mathbf{v}(t)}\left( \mathfrak{M}_{(1,\ldots ,1)}(\mathbf{V}%
)\right) \right) $ there exists $(\dot{\varpi},\dot{w}_{1},\ldots ,\dot{%
w}_{d})\in \mathbb{R}\times 
\mathop{\mathchoice{\raise-0.22em\hbox{\huge $\times$}}
{\raise-0.05em\hbox{\Large $\times$}}{\hbox{\large $\times$}}{\times}}%
_{\alpha \in D}W_{\alpha}^{\min}(\mathbf{v}_0),$ such that 
\begin{equation*}
\dot{\mathbf{w}}=\dot{\varpi}\bigotimes_{\alpha \in D}v_{\alpha}(t)+
\sum_{\alpha \in D}\dot{w}_{\alpha}\otimes (\bigotimes_{\substack{ \beta \in D \\ \beta \neq \alpha}}v_{\beta}(t)).
\end{equation*}%
Observe that \eqref{eq:11} holds if and only if 
\begin{equation*}
\left\langle \dot{\varpi}%
\bigotimes_{\alpha \in D}v_{\alpha}(t)+ \sum_{\alpha \in D}\dot{w}%
_{\alpha}\otimes (\bigotimes_{\substack{ \beta \in D \\ \beta \neq \alpha}}v_{\beta}(t)),
J(\dot{\mathbf{v}}_{r}(t)-A\mathbf{v}_{r}(t))
\right\rangle =0
\end{equation*}%
for all $(\dot{\varpi},\dot{w}_{1},\ldots ,\dot{w}_{d})\in \mathbb{R}%
\times 
\mathop{\mathchoice{\raise-0.22em\hbox{\huge $\times$}}
{\raise-0.05em\hbox{\Large $\times$}}{\hbox{\large $\times$}}{\times}}%
_{\alpha \in D} W_{\alpha}^{\min}(\mathbf{v_0}).$ In particular, for a fixed
$\alpha \in D$ take $\dot{w}_{\beta} = 0$ for all $\beta \neq \alpha$ and $\dot{\varpi}=0$ then
$$
\left\langle \dot{w}%
_{\alpha}\otimes (\bigotimes_{\substack{ \beta \in D \\ \beta \neq \alpha}}v_{\beta}(t)),
J(\dot{\mathbf{v}}_{r}(t)-A\mathbf{v}_{r}(t))
\right\rangle =0
$$
holds for all $\dot{w}%
_{\alpha} \in W_{\alpha}^{\min}(\mathbf{v_0}).$ By taking
$\dot{\varpi} =1$ and  $\dot{w}_{\beta} = 0$ for all $\beta \in D$
it holds
$$
\left\langle 
\bigotimes_{\alpha \in D}v_{\alpha}(t),
J(\dot{\mathbf{v}}_{r}(t)-A\mathbf{v}_{r}(t))
\right\rangle =0.
$$
Since $U_{\alpha}^{\min }(\mathbf{v}(t))=%
\mathrm{span}\,\{v_{\alpha}(t)\}$ and $V_{\alpha}=U_{\alpha}^{\min }(\mathbf{v}(t))\oplus
W_{\alpha}^{\min }(\mathbf{v}_{0})$ for $\alpha \in D$ the theorem follows.
\end{proof}

\bigskip

Let $\left\langle \cdot,\cdot\right\rangle _{\alpha}$ be a scalar product defined
on $V_{\alpha}$ $\left( \alpha \in D \right) $, i.e., $V_{\alpha}$ is a pre-Hilbert
space. Then $\mathbf{V}=\left. _{a}\bigotimes_{\alpha \in D}V_{\alpha}\right. $ is
again a pre-Hilbert space with a scalar product which is defined for
elementary tensors $\mathbf{v}=\bigotimes_{\alpha \in D}v^{(\alpha)}$ and $\mathbf{w}%
=\bigotimes_{\alpha \in D}w^{(\alpha)}$ by%
\begin{equation}
\left\langle \mathbf{v,w}\right\rangle =\left\langle
\bigotimes_{\alpha \in D}v^{(\alpha)},\bigotimes_{\alpha \in D}w^{(\alpha)}\right\rangle
:=\prod_{\alpha \in D}\left\langle v^{(\alpha)},w^{(\alpha)}\right\rangle _{\alpha}\qquad\text{%
for all }v^{(\alpha)},w^{(\alpha)}\in V_{\alpha}.
\label{(Skalarprodukt fur Elementarprodukte}
\end{equation}
This bilinear form has a unique extension $\left\langle \cdot,\cdot
\right\rangle :\mathbf{V}\times\mathbf{V}\rightarrow\mathbb{R}.$ One
verifies that $\left\langle \cdot,\cdot\right\rangle $ is a scalar product,
called the \emph{induced scalar product}. Let $\mathbf{V}$ be equipped with
the norm $\left\Vert \cdot\right\Vert $ corresponding to the induced scalar
product $\left\langle \cdot,\cdot\right\rangle .$ As usual, the Hilbert
tensor space $\mathbf{V}_{\left\Vert \cdot\right\Vert }=\left. _{\left\Vert
\cdot \right\Vert }\bigotimes_{\alpha \in D}V_{\alpha}\right. $ is the completion of $%
\mathbf{V}$ with respect to $\left\Vert \cdot\right\Vert $. Since the norm $%
\left\Vert \cdot\right\Vert $ is derived via \eqref{(Skalarprodukt fur
Elementarprodukte}, it is easy to see that $\left\Vert \cdot\right\Vert $
is a reasonable and even uniform crossnorm. Moreover,
without loss of generality, we can assume $\Vert v_{0}^{(\alpha)}\Vert
_{\alpha}=1$ for $\alpha \in D.$

Before stating the next result, we introduce for $\mathbf{v}_{r}(t)=\lambda
(t)\bigotimes_{\alpha \in D}v_{\alpha}(t)$ the following time dependent bilinear forms 
\begin{equation*}
\mathrm{a}_{\alpha}(t;\cdot ,\cdot ):V_{\alpha}\times V_{\alpha}\longrightarrow \mathbb{R},
\end{equation*}%
defined by 
\begin{equation*}
\mathrm{a}_{\alpha}(t;z_{\alpha},y_{\alpha}):=\left\langle A\Big( z_{\alpha}\otimes
\bigotimes_{\substack{ \beta \in D \\ \beta \neq \alpha}}v_{\beta}(t)\Big) ,\Big( y_{\alpha}\otimes \bigotimes_{\substack{ \beta \in D \\ \beta \neq \alpha}}v_{\beta}(t)\Big) \right\rangle
\end{equation*}%
for each $\alpha \in D.$ Now, we will show the next result (compare with
Theorem~3.1 in \cite{Lubish}).

\begin{theorem}[Time dependent Hartree method on tensor Hilbert spaces]
The solution $\mathbf{v}_{r}(t)=\lambda (t)\bigotimes_{\alpha \in D}v_{\alpha}(t)$ with 
$(v_{1}(t),\ldots ,v_{d}(t))\in 
\mathop{\mathchoice{\raise-0.22em\hbox{\huge
$\times$}} {\raise-0.05em\hbox{\Large $\times$}}{\hbox{\large
$\times$}}{\times}}_{\alpha \in D}\mathbb{S}_{V_{\alpha}}$, of 
\begin{align*}
\dot{\mathbf{v}}_{r}(t)& =\mathcal{P}_{\mathbf{v}_{r}(t)}(A\mathbf{v}_{r}(t))%
\text{ for }t\in I, \\
\mathbf{v}_{r}(0)& =\mathbf{v}_{0},
\end{align*}%
satisfies 
\begin{equation*}
\langle \dot{v}_{\alpha}(t),\dot{w}_{\alpha}\rangle _{\alpha}-\mathrm{a}_{\alpha}(t;v_{\alpha}(t),%
\dot{w}_{\alpha}) =0\text{\quad for all }\dot{w}_{\alpha}\in \mathbb{T}%
_{v_{\alpha}(t)}(\mathbb{S}_{V_{\alpha}}),\quad \alpha \in D,
\end{equation*}%
and 
\begin{equation*}
\lambda (t)=\lambda _{0}\exp \left( \int_{0}^{t}\left\langle A\left( \otimes
_{\alpha \in D}v_{\alpha}(s)\right) ,\otimes _{\alpha \in D}v_{\alpha}(s)\right\rangle ds\right)
.
\end{equation*}
\end{theorem}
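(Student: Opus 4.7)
The plan is to specialize the preceding Banach theorem to the Hilbert setting, where the normalized duality map $J$ becomes the identity after Riesz identification, and then to exploit the crossnorm property \eqref{(Skalarprodukt fur Elementarprodukte} to decouple the equations into one scalar ODE for $\lambda(t)$ and $d$ independent variational equations for the factors $v_{\alpha}(t)$.

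First, I would set up the admissible curves. By Lemma~\ref{previous_HF}, writing $\mathbf{v}_{r}(t)=\lambda(t)\bigotimes_{\alpha\in D}v_{\alpha}(t)$ with $v_{\alpha}(t)\in\mathbb{S}_{V_{\alpha}}$, the derivative reads
\begin{equation*}
\dot{\mathbf{v}}_{r}(t)=\dot{\lambda}(t)\bigotimes_{\alpha\in D}v_{\alpha}(t)+\lambda(t)\sum_{\alpha\in D}\dot{v}_{\alpha}(t)\otimes\bigotimes_{\substack{\beta\in D\\ \beta\neq\alpha}}v_{\beta}(t),
\end{equation*}
with $\dot{v}_{\alpha}(t)\in\mathbb{T}_{v_{\alpha}(t)}(\mathbb{S}_{V_{\alpha}})$, i.e. $\langle\dot{v}_{\alpha}(t),v_{\alpha}(t)\rangle_{\alpha}=0$. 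A generic tangent vector at $\mathbf{v}_r(t)$ in $\mathrm{T}_{\mathbf{v}_r(t)}\mathfrak{i}(\mathbb{T}_{\mathbf{v}_r(t)}(\mathfrak{M}_{(1,\ldots,1)}(\mathbf{V})))$ has the same structural form with parameters $(\dot{\varpi},\dot{w}_{1},\ldots,\dot{w}_{d})\in\mathbb{R}\times\mathop{\mathchoice{\raise-0.22em\hbox{\huge $\times$}}{\raise-0.05em\hbox{\Large $\times$}}{\hbox{\large$\times$}}{\times}}_{\alpha\in D}\mathbb{T}_{v_{\alpha}(t)}(\mathbb{S}_{V_{\alpha}})$.

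Next, I would invoke the previous (Banach) theorem; since in a Hilbert space $J$ is the identity after identifying $V$ with $V^{\ast}$, the Dirac--Frenkel condition becomes the plain orthogonality
\begin{equation*}
\langle\dot{\mathbf{v}}_{r}(t)-A\mathbf{v}_{r}(t),\dot{\mathbf{v}}\rangle=0\qquad\text{for all }\dot{\mathbf{v}}\in\mathrm{T}_{\mathbf{v}_r(t)}\mathfrak{i}(\mathbb{T}_{\mathbf{v}_r(t)}(\mathfrak{M}_{(1,\ldots,1)}(\mathbf{V}))).
\end{equation*}
By the crossnorm identity \eqref{(Skalarprodukt fur Elementarprodukte} together with $\|v_{\gamma}(t)\|_{\gamma}=1$ and the tangency $\langle v_{\gamma}(t),\dot{w}_{\gamma}\rangle_{\gamma}=\langle v_{\gamma}(t),\dot{v}_{\gamma}(t)\rangle_{\gamma}=0$, all ``off-diagonal'' inner products between the $\lambda$-term and the $\dot{v}_{\alpha}$-terms, and between distinct $\alpha\neq\beta$ slots, vanish. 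This is the main bookkeeping step.

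Testing then with the two natural families of test vectors yields the two equations. Taking $\dot{\varpi}=1$ and all $\dot{w}_{\alpha}=0$ gives
\begin{equation*}
\dot{\lambda}(t)=\langle A\mathbf{v}_{r}(t),\textstyle\bigotimes_{\alpha\in D}v_{\alpha}(t)\rangle=\lambda(t)\,\bigl\langle A(\textstyle\bigotimes_{\alpha\in D}v_{\alpha}(t)),\textstyle\bigotimes_{\alpha\in D}v_{\alpha}(t)\bigr\rangle,
\end{equation*}
whose unique solution with $\lambda(0)=\lambda_{0}$ is the asserted exponential formula (in particular $\lambda(t)\neq 0$ for all $t\in I$). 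Conversely, fixing $\alpha\in D$ and taking $\dot{\varpi}=0$, $\dot{w}_{\beta}=0$ for $\beta\neq\alpha$ and arbitrary $\dot{w}_{\alpha}\in\mathbb{T}_{v_{\alpha}(t)}(\mathbb{S}_{V_{\alpha}})$ gives
\begin{equation*}
\lambda(t)\,\langle\dot{v}_{\alpha}(t),\dot{w}_{\alpha}\rangle_{\alpha}=\langle A\mathbf{v}_{r}(t),\dot{w}_{\alpha}\otimes\textstyle\bigotimes_{\beta\neq\alpha}v_{\beta}(t)\rangle=\lambda(t)\,\mathrm{a}_{\alpha}(t;v_{\alpha}(t),\dot{w}_{\alpha}),
\end{equation*}
by the definition of $\mathrm{a}_{\alpha}$. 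Dividing by $\lambda(t)\neq 0$ yields the Euler--Lagrange equations for the $v_{\alpha}$. The main (minor) obstacle is the cancellation bookkeeping in the third paragraph: keeping track of which factor in each tensor product lies in a prescribed tangent direction and invoking the crossnorm identity slot by slot, a calculation made routine by the unit-sphere normalization $\|v_{\alpha}(t)\|_{\alpha}=1$.
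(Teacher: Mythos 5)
Your proposal is correct and follows essentially the same route as the paper: expand $\dot{\mathbf{v}}_{r}(t)$ via Lemma~\ref{previous_HF}, specialize the Dirac--Frenkel condition \eqref{eq:11} to the Hilbert setting where $J$ is the identity, and use the induced scalar product \eqref{(Skalarprodukt fur Elementarprodukte} together with $\Vert v_{\alpha}(t)\Vert_{\alpha}=1$ and $\langle v_{\alpha}(t),\dot{v}_{\alpha}(t)\rangle_{\alpha}=\langle v_{\alpha}(t),\dot{w}_{\alpha}\rangle_{\alpha}=0$ to kill the cross terms. The only (immaterial) difference is organisational: the paper collects everything into the single identity \eqref{HF5} and then lets the $\dot{\varpi}$-coefficient and the $\dot{w}_{\alpha}$-coefficients vanish separately, whereas you test directly with the two families of tangent vectors; both arguments use $\lambda(t)\neq 0$ (guaranteed by the exponential formula) in the same way.
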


\begin{proof}
From Lemma~\ref{previous_HF} we have $\mathbb{T}_{\mathbf{v}_{r}(t)}\left( 
\mathfrak{M}_{(1,\ldots ,1)}(\mathbf{V})\right) =\mathbb{R}\times 
\mathop{\mathchoice{\raise-0.22em\hbox{\huge
$\times$}} {\raise-0.05em\hbox{\Large $\times$}}{\hbox{\large
$\times$}}{\times}}_{\alpha \in D}\mathbb{T}_{v_{\alpha}(t)}(\mathbb{S}_{V_{\alpha}}),$
Thus, for each $\dot{\mathbf{w}} \in \mathrm{T}_{\mathbf{v}(t)}i\left( 
\mathbb{T}_{\mathbf{v}(t)}\left( \mathfrak{M}_{(1,\ldots ,1)}(\mathbf{V}%
)\right) \right) $ there exists $(\dot{\varpi},\dot{w}_{1},\ldots ,\dot{%
w}_{d})\in \mathbb{R}\times 
\mathop{\mathchoice{\raise-0.22em\hbox{\huge $\times$}}
{\raise-0.05em\hbox{\Large $\times$}}{\hbox{\large $\times$}}{\times}}%
_{\alpha \in D}\mathbb{T}_{v_{\alpha}(t)}(\mathbb{S}_{V_{\alpha}}),$ such that 
\begin{equation*}
\dot{\mathbf{w}}=\dot{\varpi}\bigotimes_{\alpha \in D}v_{\alpha}(t)+\lambda
(t)\sum_{\alpha \in D}\dot{w}_{\alpha}\otimes (\bigotimes_{\substack{ \beta \in D \\ \beta \neq \alpha}}v_{\beta}(t)).
\end{equation*}%
Then \eqref{eq:11} holds if and only if 
\begin{equation*}
\left\langle\dot{\varpi}%
\bigotimes_{\alpha \in D}v_{\alpha}(t)+\lambda (t)\sum_{\alpha \in D}\dot{w}%
_{\alpha}\otimes (\bigotimes_{\substack{ \beta \in D \\ \beta \neq \alpha}}v_{\beta}(t)), \dot{\mathbf{v}}_{r}(t)-A\mathbf{v}_{r}(t)\right\rangle =0
\end{equation*}%
for all $(\dot{\varpi},\dot{w}_{1},\ldots ,\dot{w}_{d})\in \mathbb{R}%
\times 
\mathop{\mathchoice{\raise-0.22em\hbox{\huge $\times$}}
{\raise-0.05em\hbox{\Large $\times$}}{\hbox{\large $\times$}}{\times}}%
_{\alpha \in D}\mathbb{T}_{v_{\alpha}(t)}(\mathbb{S}_{V_{\alpha}}).$ Then 
\begin{align*}
\dot{\lambda}(t)\dot{\varpi}+\lambda (t)^{2}\sum_{\alpha \in D}\left( \langle 
\dot{v}_{\alpha}(t),\dot{w}_{\alpha}\rangle _{\alpha}-\langle
A\bigotimes_{\mu \in D}v_{\mu}(t),\dot{w}_{\alpha}\otimes (\bigotimes_{\substack{ \beta \in D \\ \beta \neq \alpha}}v_{\beta}(t))\rangle \right)& \\
-\lambda (t)\dot{\varpi}\langle
A\bigotimes_{\alpha \in D}v_{\alpha}(t),\bigotimes_{\alpha \in D}v_{\alpha}(t)\rangle &=0,
\end{align*}%
i.e., 
\begin{equation}  \label{HF5}
\begin{array}{l}
\dot{\varpi}\left( \dot{\lambda}(t)-\lambda (t)\langle
A\bigotimes_{\alpha \in D}v_{\alpha}(t),\bigotimes_{\alpha \in D}v_{\alpha}(t)\rangle \right) \\ 
+\lambda (t)^{2}\sum_{\alpha \in D}\left( \langle \dot{v}_{\alpha}(t),\dot{w}%
_{\alpha}\rangle _{\alpha}-\langle A\bigotimes_{\mu \in D}v_{\mu}(t),\dot{w}%
_{\alpha}\otimes (\bigotimes_{\substack{ \beta \in D \\ \beta \neq \alpha}}v_{\beta}(t))\rangle \right) =0%
\end{array}%
\end{equation}%
holds for all $\dot{\varpi}\in \mathbb{R}$ and $(\dot{w}_{1},\ldots ,%
\dot{w}_{d})\in 
\mathop{\mathchoice{\raise-0.22em\hbox{\huge $\times$}}
{\raise-0.05em\hbox{\Large $\times$}}{\hbox{\large $\times$}}{\times}}%
_{\alpha \in D}\mathbb{T}_{v_{\alpha}(t)}(\mathbb{S}_{V_{\alpha}}).$ If $\lambda (t)$
solves the differential equation 
\begin{align*}
\dot{\lambda}(t)& =\left\langle A\left( \otimes _{\alpha \in D}v_{\alpha}(t)\right)
,\otimes _{\alpha \in D}v_{\alpha}(t)\right\rangle \lambda (t) \\
\lambda (0)& =\lambda _{0},
\end{align*}%
i.e., 
\begin{equation*}
\lambda (t)=\lambda _{0}\exp \left( \int_{0}^{t}\left\langle A\left( \otimes
_{\alpha \in D}v_{\alpha}(s)\right) ,\otimes _{\alpha \in D}v_{\alpha}(s)\right\rangle ds\right)
,
\end{equation*}%
then the first term of \eqref{HF5} is equal to $0.$ Therefore, from (\ref%
{HF5}) we obtain that for all $\alpha \in D$, 
\begin{equation*}
\langle \dot{v}_{\alpha}(t),\dot{w}_{\alpha}\rangle _{\alpha}-\langle
A\bigotimes_{\mu \in D}v_{\mu}(t),\dot{w}_{\alpha}\otimes (\bigotimes_{\substack{ \beta \in D \\ \beta \neq \alpha}}v_{\beta}(t))\rangle =0,
\end{equation*}%
that is, 
\begin{equation*}
\langle \dot{v}_{\alpha}(t),\dot{w}_{\alpha}\rangle _{\alpha}-\mathrm{a}_{\alpha}(t;v_{\alpha}(t),%
\dot{w}_{\alpha})=0
\end{equation*}%
holds for all $\dot{w}_{\alpha}\in \mathbb{T}_{v_{\alpha}(t)}(\mathbb{S}_{V_{\alpha}}),$
and the theorem follows.
\end{proof}

\subsection{Concluding Remarks}

We would point out that when we assume that $V_{\alpha} = V$ for all $\alpha \in D$ then 
the theory presented above covers the classical MCTDH approximation for molecules (see for example 
Section 1.9 in \cite{Hartree}). In fact the approximate wave function $\mathbf{v}_r(t)$ 
computed on $\mathfrak{M}_{(1,\ldots,1)}(\mathbf{V})$ 
does not conform the Pauli's exclusion principle. To take into account 
the antisymmetry of the wave function we need to use the
so-called multi--configuration time--dependent Hartree-Fock (MCTDHF) approximation. The MCTDHF 
is based on the use of the so-called Hartree-Fock manifold. This manifold is constructed
by using the existence of a projection $P_{\mathfrak{S}}$ from
$\mathbf{V}$ to the linear subspace of antisymmetric tensors 
of $\mathbf{V}$ (corresponding to fermions). Then the Hartree-Fock manifold is defined as
$$
\mathfrak{M}_{(1,\ldots,1)}^{\mathfrak{A}}(\mathbf{V}):=\left\{
P_{\mathfrak{S}}(v_1 \otimes \cdots \otimes v_d): v_{\alpha} \in V_{\alpha},\, \alpha \in D
\right\}.
$$ 
In a similar way, the use of a projection onto the linear subspace of symmetric tensors of $\mathbf{V}$ 
(corresponding to bosons) allows us to introduce a manifold, 
namely $\mathfrak{M}_{(1,\ldots,1)}^{\mathfrak{S}}(\mathbf{V}).$ 
The extension of the results given in this paper to $\mathfrak{M}_{(1,\ldots,1)}^{\mathfrak{A}}(\mathbf{V})$ and $\mathfrak{M}_{(1,\ldots,1)}^{\mathfrak{S}}(\mathbf{V})$ 
are part of a work in progress and will be published elsewhere.

\end{document}